\numberwithin{equation}{section}
\numberwithin{figure}{section}
\theoremstyle{plain}
\newtheorem{thm}{\protect\theoremname}[section]
\theoremstyle{remark}
\newtheorem{rem}[thm]{\protect\remarkname}
\theoremstyle{definition}
\newtheorem{defn}[thm]{\protect\definitionname}
\theoremstyle{remark}
\newtheorem{notation}[thm]{\protect\notationname}
\theoremstyle{plain}
\newtheorem{lem}[thm]{\protect\lemmaname}
\theoremstyle{plain}
\newtheorem{prop}[thm]{\protect\propositionname}
\theoremstyle{remark}
\newtheorem*{acknowledgement*}{\protect\acknowledgementname}
\DeclareFontFamily{U}{matha}{\hyphenchar\font45}
\DeclareFontShape{U}{matha}{m}{n}{
      <5> <6> <7> <8> <9> <10> gen * matha
      <10.95> matha10 <12> <14.4> <17.28> <20.74> <24.88> matha12
      }{}
\DeclareSymbolFont{matha}{U}{matha}{m}{n}
\DeclareFontFamily{U}{mathx}{\hyphenchar\font45}
\DeclareFontShape{U}{mathx}{m}{n}{
      <5> <6> <7> <8> <9> <10>
      <10.95> <12> <14.4> <17.28> <20.74> <24.88>
      mathx10
      }{}
\DeclareSymbolFont{mathx}{U}{mathx}{m}{n}
\numberwithin{equation}{section}
\def\th@plain{\thm@notefont{}\itshape}
\def\th@definition{\thm@notefont{}\normalfont}
\providecommand{\acknowledgementname}{Acknowledgement}
\providecommand{\definitionname}{Definition}
\providecommand{\lemmaname}{Lemma}
\providecommand{\notationname}{Notation}
\providecommand{\propositionname}{Proposition}
\providecommand{\remarkname}{Remark}
\providecommand{\theoremname}{Theorem}
\begin{document}
\title[Metastability of Blume--Capel Model]{Metastability of Blume--Capel Model with Zero Chemical Potential
and Zero External Field}
\author{Seonwoo Kim}
\address{S. Kim. Department of Mathematical Sciences, Seoul National University,
Republic of Korea.}
\email{ksw6leta@snu.ac.kr}
\begin{abstract}
In this study, we investigate the metastable behavior of Metropolis-type
Glauber dynamics associated with the Blume--Capel model with zero
chemical potential and zero external field at very low temperatures.
The corresponding analyses for the same model with zero chemical potential
and positive small external field were performed in {[}Cirillo and
Nardi, Journal of Statistical Physics, 150: 1080-1114, 2013{]} and
{[}Landim and Lemire, Journal of Statistical Physics, 164: 346-376,
2016{]}. We obtain both large deviation-type and potential-theoretic
results on the metastable behavior in our setting. To this end, we
perform highly thorough investigation on the energy landscape, where
it is revealed that no critical configurations exist and alternatively
a massive flat plateau of saddle configurations resides therein.
\end{abstract}

\keywords{Metastability, energy landscape, saddle structure, spin system, Blume--Capel model.}
\maketitle
\tableofcontents{}

\section{\label{sec1}Introduction}

Within the context of statistical mechanics, metastability is a phenomenon
of first-order phase transition that occurs in various systems consisting
of multiple locally stable states. Extensive research has been carried
out on metastability since the mid-20th century, ranging from the
early works \cite{B-E-G-K,C-G-O-V} to recently developed methodologies
\cite{B-L TM,B-L TM2,B-L MG,L-M-Seo2}. As a result, various stochastic
systems have been known to exhibit such behavior; important examples
include the small random perturbations of dynamical systems \cite{B-E-G-K,L-Mariani-Seo,R-S},
condensing interacting particle systems \cite{Kim,Kim-Seo NRIP,L-M-Seo2,Seo NRZRP},
and ferromagnetic spin systems at low temperatures \cite{BA-C,C-O,L-Le,L-Le-M 19,N-Z,N-S Ising1}.
We refer to the classic monographs \cite{B-denH,O-V} for detailed
explanation on the history and perspectives regarding the phenomenon
of metastability.

We investigate the metastable behavior of the well-known \textit{Blume--Capel
model} on two-dimensional (2D) lattices. This model is a ferromagnetic
spin system that consists of three spins, namely $-1$, $0$, and
$+1$, and it was originally introduced to study the $^{3}\mathrm{He}$--$^{4}\mathrm{He}$
phase transition. In this system, spin $0$ at a site indicates the
absence of particles, whereas spin $-1$ (resp. $+1$) at a site means
that the site is occupied by a particle with spin $-1$ (resp. $+1$).
The system is controlled by the Hamiltonian function (cf. \eqref{e_Horiginal})
that is defined on the collection of spin configurations. This Hamiltonian
represents the ferromagnetic behavior of the spins in the sense that
more aligned spin configurations exhibit greater stability. Thus,
the most stable configurations are the monochromatic ones (cf. \eqref{e_S}).
The system is controlled by Metropolis-type Glauber dynamics (cf.
\eqref{e_cbeta}), where $\beta$ is the inverse temperature, so that
it becomes exponentially difficult to overcome the energy barrier
in each spin update. According to the dynamics, we investigate the
long-time metastable transitions between the monochromatic spin configurations
in the low-temperature regime $\beta\rightarrow\infty$.

An inspection of formula \eqref{e_Horiginal} reveals that the Hamiltonian
has two variables: the \textit{chemical potential} $\lambda$ and
\textit{external magnetic field} $h$. We are interested in the metastable
behavior when these external effects are small; that is, when $(\lambda,\,h)$
is close to $(0,\,0)$. The case of $0<|\lambda|<h$ was thoroughly
investigated in \cite{C-O,M-O}, where \cite{C-O} worked on fixed
finite square tori and \cite{M-O} worked on the infinite lattice
$\mathbb{Z}^{2}$. Subsequently, the case of $\lambda=0$ and $h>0$
was studied in \cite{C-N,C-N-Spi 17,L-Le,L-Le-M 19}, where \cite{C-N,C-N-Spi 17,L-Le}
considered fixed finite square tori and \cite{L-Le-M 19} considered
finite square tori whose lengths increase to infinity. In all of the
above works, the authors established the existence of a special form
of \textit{critical saddle configurations}, whereby the metastable
transitions between the monochromatic configurations must pass through
a configuration of this type.

In this study, we investigate the \textit{Blume--Capel model on fixed
finite lattices in the case of $\lambda=h=0$}, in which it is remarkable
that \textit{no critical saddle configurations exist.} Instead, a
metastable transition starting from a monochromatic configuration
must occur along a \textit{massive flat plateau of saddle configurations}
to reach another. Hence, to analyze the exact behavior of the metastable
transitions quantitatively, \textit{the overall energy landscape of
spin configurations in the system must be investigated.} This is the
main mathematical obstacle that is successfully overcome in the current
study.

The massive flat saddle plateau is indeed the essence of the energy
landscape in our model. We denote by \textit{typical configurations}
(cf. Definition \ref{d_typ}) those that are reachable by metastable
transitions with respect to the correct scale. Then, we obtain two
types of typical configurations, namely \textit{bulk} ones and \textit{edge}
ones. Bulk typical configurations form the main component of metastable
transitions and their structure is very simple in that the transitions
occur one-dimensionally therein. Edge typical configurations constitute
the initiating and finalizing components of metastable transitions
and their structure is complex compared to that of the bulk ones.
Hence, the edge typical configurations need to be handled much more
delicately, as described in Section \ref{sec6}.

The structure of edge typical configurations is strongly dependent
on the boundary conditions on the lattice. More specifically, if the
lattice has open boundaries (i.e., $\Lambda=\llbracket1,\,K\rrbracket\times\llbracket1,\,L\rrbracket\subseteq\mathbb{Z}^{2}$),
the structure is relatively simple and the exact behavior of the dynamics
can be computed. However, if the lattice has periodic boundaries (i.e.,
$\Lambda=\mathbb{T}_{K}\times\mathbb{T}_{L}$), the situation becomes
complex and the structure becomes a Markov chain on certain subtrees
of a $K\times2$-shaped ladder graph. Although we cannot characterize
the exact behavior of the dynamics in this case, our estimate is sufficient
to deduce the main results of this study. We refer to Remarks \ref{r_bdry},
\ref{r_bdry2}, and \ref{r_bdry3} for further details.

The main results obtained in this study are divided into two types:
large deviation-type results (cf. Section \ref{sec2.2}) and potential
theory-type results (cf. Section \ref{sec2.3}). For the former, we
use the pathwise approach \cite{C-G-O-V} to metastability; in particular,
the recent methodology \cite{N-Z-B}, which enables us to estimate
certain concepts regarding metastability (the transition time, mixing
time, and spectral gap) by analyzing the valley depths of the energy
landscape. For the latter, we use the potential-theoretic \cite{B-E-G-K}
and martingale \cite{B-L TM,B-L TM2,B-L MG} approaches to metastability.
These methods offer the advantage of providing the sharp asymptotics
of the transition time by analyzing the capacity (cf. \eqref{e_Capdef}),
which is unattainable with the classic pathwise approach to metastability.

The Blume--Capel model has many similar features to the stochastic
\textit{Potts model} with three spins, which generalizes the number
of spins of the well-known stochastic \textit{Ising model} (which
has two spins, $-1$ and $+1$). The metastable behavior of the Ising
and Potts models has been studied extensively in the past several
decades \cite{BA-C,N-Z,N-S Ising1}. Recently, we conducted in
\cite{Kim-Seo Ising-Potts} (which is our companion paper) a quantitative
analysis on the metastable behavior of the Ising and Potts models
with zero external fields on two- and three- dimensional lattices,
and we frequently refer to \cite{Kim-Seo Ising-Potts} for insights
into and details on the deductions presented in this article.

Natural open questions arise in two directions. The first objective
is to investigate the dynamics with $\lambda=h=0$ on growing lattices,
as demonstrated by the authors of \cite{L-Le-M 19} for the case of
$\lambda=0$ and $h>0$. In this case, the growth rate of the lattice
and transition rate between the saddle configurations need to be compared
to deduce the exact time scale. The second objective is to study the
dynamics with $\lambda=h=0$ in the infinite volume lattice, as accomplished
in \cite{M-O}, for which it is necessary to investigate whether a
specific form of critical configurations still fails to exist in the
infinite volume case or emerges in this particular setting.

\section{\label{sec2}Main Results}

\subsection{\label{sec2.1}Model definition}

\subsubsection*{Blume--Capel model}

We define the Blume--Capel model on the finite 2D lattice box $\Lambda=\llbracket1,\,K\rrbracket\times\llbracket1,\,L\rrbracket$,
where $K$ and $L$ are fixed positive integers. For convenience,
we assume that 
\begin{equation}
5\le K\le L.\label{e_KL}
\end{equation}
We impose either open or periodic boundary conditions on $\Lambda$.
If $K=L$ under the periodic boundary conditions, the lattice is indeed
$\mathbb{T}_{L}\times\mathbb{T}_{L}$ as in the previous studies \cite{C-N,C-N-Spi 17,L-Le}.
For $x,\,y\in\Lambda$\footnote{If we take elements from a set by writing $a,\,b\in A$, we implicitly
imply that $a$ and $b$ are different.}, we write $x\sim y$ if they are nearest neighbors; that is, $|x-y|=1$.

We have three spins in this model, namely $-1$, $0$, and $+1$.
We denote by $\mathcal{X}=\{-1,\,0,\,+1\}^{\Lambda}$ the space of
the spin configurations on $\Lambda$. Subsequently, we define the
Hamiltonian $H:\mathcal{X}\rightarrow\mathbb{R}$ as
\begin{equation}
H(\sigma)=\sum_{x\sim y}\{\sigma(x)-\sigma(y)\}^{2}-\lambda\sum_{x\in\Lambda}\sigma(x)^{2}-h\sum_{x\in\Lambda}\sigma(x).\label{e_Horiginal}
\end{equation}
Here, $\sigma(x)$ is the spin of configuration $\sigma\in\mathcal{X}$
at site $x\in\Lambda$. Moreover, we assume that the \textit{chemical
potential} $\lambda$ and \textit{external field} $h$ are both zero,
so that
\begin{equation}
H(\sigma)=\sum_{x\sim y}\{\sigma(x)-\sigma(y)\}^{2}.\label{e_H}
\end{equation}
We denote by $\mu_{\beta}$ the Gibbs measure on $\mathcal{X}$ associated
with the Hamiltonian $H$ at the inverse temperature $\beta>0$:
\begin{equation}
\mu_{\beta}(\sigma)=\frac{1}{Z_{\beta}}e^{-\beta H(\sigma)},\;\;\;\;Z_{\beta}=\sum_{\sigma\in\mathcal{X}}e^{-\beta H(\sigma)}.\label{e_mu}
\end{equation}
We denote by $\mathbf{-1},\,\mathbf{0},\,\mathbf{+1}\in\mathcal{X}$
the monochromatic configurations, of which all spins are $-1,\,0,\,+1$,
respectively. We write
\begin{equation}
\mathcal{S}=\{\mathbf{-1},\,\mathbf{0},\,\mathbf{+1}\}.\label{e_S}
\end{equation}
When we select spins $a$ or $b$, the corresponding monochromatic
configuration is denoted by $\mathbf{a}\in\mathcal{S}$ or $\mathbf{b}\in\mathcal{S}$,
respectively. It is precisely on $\mathcal{S}$ that $H(\cdot)$ attains
its minimum $0$, and hence, $\mathcal{S}$ denotes the collection
of \textit{ground states}. The following estimates are straightforward:\footnote{For two collections $(a_{\beta})_{\beta>0}=(a_{\beta}(K,\,L))_{\beta>0}$
and $(b_{\beta})_{\beta>0}=(b_{\beta}(K,\,L))_{\beta>0}$ of real
numbers, we denote $a_{\beta}=O(b_{\beta})$ if there exists $C=C(K,\,L)>0$
such that $|a_{\beta}|\le Cb_{\beta}$ for all $\beta>0$ and $K,\,L$.
We denote $a_{\beta}=o(b_{\beta})$ if $\lim_{\beta\rightarrow\infty}a_{\beta}/b_{\beta}=0$
for all $K,\,L$. Moreover, we state that $a_{\beta}$ and $b_{\beta}$
are \textit{asymptotically equal} and denote by $a_{\beta}\simeq b_{\beta}$
if $\lim_{\beta\rightarrow\infty}a_{\beta}/b_{\beta}=1$ for all $K,\,L$.}
\begin{equation}
Z_{\beta}=3+O(e^{-2\beta})\;\;\;\;\text{and}\;\;\;\;\lim_{\beta\rightarrow\infty}\mu_{\beta}(\mathbf{s})=\frac{1}{3}\text{ for all }\mathbf{s}\in\mathcal{S}.\label{e_Zmu}
\end{equation}

\subsubsection*{Continuous-time Metropolis dynamics}

For $\sigma\in\mathcal{X}$, $x\in\Lambda$, and spin $a$, we denote
by $\sigma^{x,a}\in\mathcal{X}$ the configuration obtained from $\sigma$
by updating the spin at site $x$ to $a$. Thereafter, the dynamics
is defined as the continuous-time Markov chain $\{\sigma_{\beta}(t)\}_{t\ge0}$
on $\mathcal{X}$, the transition rates of which are given by
\begin{equation}
c_{\beta}(\sigma,\,\zeta)=\begin{cases}
e^{-\beta[H(\zeta)-H(\sigma)]_{+}} & \text{if }\zeta=\sigma^{x,a}\ne\sigma\text{ for some }x\in\Lambda\text{ and spin }a,\\
0 & \text{otherwise},
\end{cases}\label{e_cbeta}
\end{equation}
where $[t]_{+}=\max\{t,\,0\}$. It is easy to observe that $\sigma_{\beta}(\cdot)$
is irreducible. For $\sigma,\,\zeta\in\mathcal{X}$, we write $\sigma\sim\zeta$
if $c_{\beta}(\sigma,\,\zeta)>0$. It is clear that $\sigma\sim\zeta$
if and only if $\zeta\sim\sigma$, and that the relation $\sigma\sim\zeta$
does not depend on the exact value of $\beta>0$. Moreover, for each
$\mathcal{A}\subseteq\mathcal{X}$, we define the collection of edges
in $\mathcal{A}$ as follows:
\begin{equation}
E(\mathcal{A})=\{\{\sigma,\,\zeta\}\subseteq\mathcal{A}:\sigma\sim\zeta\}.\label{e_edgenot}
\end{equation}

For the above dynamics, the detailed balance condition holds; that
is,
\begin{equation}
\mu_{\beta}(\sigma)c_{\beta}(\sigma,\,\zeta)=\mu_{\beta}(\zeta)c_{\beta}(\zeta,\,\sigma)=\begin{cases}
\min\{\mu_{\beta}(\sigma),\,\mu_{\beta}(\zeta)\} & \text{if }\sigma\sim\zeta,\\
0 & \text{otherwise}.
\end{cases}\label{e_detbal}
\end{equation}
Hence, the invariant measure of this Metropolis dynamics $\sigma_{\beta}(\cdot)$
is exactly $\mu_{\beta}$, and $\sigma_{\beta}(\cdot)$ is reversible
with respect to $\mu_{\beta}$. We denote by $\mathbb{P}_{\sigma}^{\beta}$
and $\mathbb{E}_{\sigma}^{\beta}$ the law and expectation, respectively,
of the process $\sigma_{\beta}(\cdot)$ starting from $\sigma\in\mathcal{X}$.
\begin{rem}
\label{r_modelsym}We remark on the model symmetry. First, our model
is fully symmetric with respect to the spin correspondence $-1\leftrightarrow+1$.
\textit{However, our model is not symmetric with respect to $-1\leftrightarrow0$
or $0\leftrightarrow+1$.} Therefore, spins $-1$ and $+1$ play the
same role, but spin $0$ does not. This is the \textit{main difference}
from the Potts model studied in \cite{Kim-Seo Ising-Potts,N-Z}, in
which all of the spins play the same role. More specifically, we present
the following differentiated features in this study:
\begin{itemize}
\item The canonical transitions occur only along \textit{good} pairs of
spins (cf. Notation \ref{n_good}). Thus, when analyzing the relevant
configurations, care should be taken with this underlying asymmetry
of the model.
\item The typical configurations are defined individually for each good
pair, whereas the corresponding ones are globally defined in \cite{Kim-Seo Ising-Potts}.
This is because the edge typical configurations near $\mathbf{-1}$
and $\mathbf{+1}$ possess a different structure compared to those
near $\mathbf{0}$ (cf. Section \ref{sec6}; see also Remark \ref{r_Ia}).
\item We cannot estimate the capacities in a unified manner owing to the
model asymmetry; thus, we first construct fundamental test functions
and flows in Section \ref{sec7}, which serve as the building blocks
for the actual test objects. Subsequently, in Section \ref{sec8},
we construct individual test objects for each capacity (cf. Theorem
\ref{t_Cap}).
\end{itemize}
\end{rem}

\subsection{\label{sec2.2}Main results: large deviation-type results}

In this subsection, we explain the large deviation-type main results
on the metastable behavior.

\subsubsection*{Energy barrier between ground states}

First, we introduce the energy barrier of the energy landscape, which
is the level of energy that must be overcome to enable a metastable
transition from one ground state to another.
\begin{defn}
\label{d_Ebarrier}We define the following objects:
\begin{enumerate}
\item A sequence of configurations $(\omega_{n})_{n=0}^{N}=(\omega_{0},\,\omega_{1},\,\dots,\,\omega_{n})\subseteq\mathcal{X}$
is called a \textit{path} if $\omega_{n}\sim\omega_{n+1}$ for all
$n\in\llbracket0,\,N-1\rrbracket$\footnote{For integers $m$ and $n$, $\llbracket m,\,n\rrbracket$ denotes
$[m,\,n]\cap\mathbb{Z}$ (i.e., integers from $m$ to $n$).}. We state that this path connects $\sigma$ and $\zeta$ if $\omega_{0}=\sigma$
and $\omega_{N}=\zeta$, or vice versa. Moreover, we state that this
path is in $\mathcal{A}\subseteq\mathcal{X}$ if $\omega_{n}\in\mathcal{A}$
for all $n\in\llbracket0,\,N\rrbracket$. For $c\in\mathbb{R}$, a
path $(\omega_{n})_{n=0}^{N}$ is called a \textit{$c$-path} if $H(\omega_{n})\le c$
for all $n\in\llbracket0,\,N\rrbracket$.
\item The \textit{communication height} between two configurations $\sigma,\,\zeta\in\mathcal{X}$
is defined by 
\[
\Phi(\sigma,\,\zeta)=\min_{(\omega_{n})_{n=0}^{N}}\max_{n\in\llbracket0,N\rrbracket}H(\omega_{n}),
\]
where the minimum is taken over all paths $(\omega_{n})_{n=0}^{N}$
that connect $\sigma$ and $\zeta$. Furthermore, the communication
height between two disjoint sets $\mathcal{A},\,\mathcal{B}\subseteq\mathcal{X}$
is defined by
\[
\Phi(\mathcal{A},\,\mathcal{B})=\min_{\sigma\in\mathcal{A}}\min_{\zeta\in\mathcal{B}}\Phi(\sigma,\,\zeta).
\]
\item For two spins $a$ and $b$, we define the \textit{energy barrier}
between $\mathbf{a},\,\mathbf{b}\in\mathcal{S}$ by
\begin{equation}
\Gamma_{a,b}=\Gamma_{a,b}(K,\,L)=\Phi(\mathbf{a},\,\mathbf{b}).\label{e_Gammaab}
\end{equation}
It is clear that $\Gamma_{a,b}=\Gamma_{b,a}$.
\end{enumerate}
\end{defn}

The following theorem characterizes the exact energy barrier; we recall
\eqref{e_KL}.
\begin{thm}[Energy barrier]
\label{t_Ebarrier} Define a constant $\Gamma$ by
\begin{equation}
\Gamma=\begin{cases}
2K+2 & \text{under periodic boundary conditions},\\
K+1 & \text{under open boundary conditions}.
\end{cases}\label{e_Gamma}
\end{equation}
Then, it holds that 
\begin{equation}
\Gamma_{-1,0}=\Gamma_{0,+1}=\Gamma_{-1,+1}=\Gamma.\label{e_Ebarrier}
\end{equation}
\end{thm}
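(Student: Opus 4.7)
The plan is to prove $\Gamma_{a,b}\le\Gamma$ by constructing explicit reference paths and $\Gamma_{a,b}\ge\Gamma$ by a Peierls-type edge-isoperimetric argument. The symmetry $-1\leftrightarrow+1$ of $H$ immediately gives $\Gamma_{-1,0}=\Gamma_{0,+1}$, so only $\Gamma_{0,+1}$ and $\Gamma_{-1,+1}$ need direct treatment.

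For the upper bound on $\Gamma_{0,+1}$, I would build a reference path from $\mathbf{0}$ to $\mathbf{+1}$ staying within $\{0,+1\}$-valued configurations, flipping sites to $+1$ one at a time row by row in the order $(1,1),(2,1),\dots,(K,1),(1,2),\dots,(K,2),\dots,(1,L),\dots,(K,L)$. In the open case, sweeping row $\ell=1$ raises the energy monotonically from $0$ to $K$, producing a $1\times K$ strip pinned to the short boundary; for each interior row $\ell\in\{2,\dots,L-1\}$, direct bookkeeping shows the first flip costs $+1$ (one below-neighbor saved against two new $\{+1,0\}$-boundaries), every interior flip costs $0$ (two savings against two new boundaries), and the last flip costs $-1$, so the energy peaks at exactly $K+1$; the final row only lowers the energy further. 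In the periodic case, the same scan first grows a $1\times K$ wrap-around band of energy $2K$, and each subsequent interior-row sweep peaks at $2K+2$ (the first flip now has three $0$-neighbors, costing $+2$; the last has three $+1$-neighbors, contributing $-2$). This yields $\Gamma_{0,+1}\le\Gamma$; concatenating $\mathbf{-1}\to\mathbf{0}\to\mathbf{+1}$ gives $\Gamma_{-1,+1}\le\Gamma$.

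For the matching lower bound, I would use the following Peierls-type estimate. For a spin $s$ and $\sigma\in\mathcal{X}$, let $S_{s}(\sigma)=\{x\in\Lambda:\sigma(x)=s\}$, and let $\partial S$ denote the edge-boundary of a set $S\subseteq\Lambda$ in the lattice graph under the given boundary conditions. Each edge of $\partial S_{s}(\sigma)$ contributes at least $1$ to $H(\sigma)$, so $H(\sigma)\ge|\partial S_{s}(\sigma)|$. Along any path $(\omega_{n})_{n=0}^{N}$ connecting distinct ground states $\mathbf{a},\mathbf{b}\in\mathcal{S}$, pick a spin $s$ with $|S_{s}(\mathbf{a})|\ne|S_{s}(\mathbf{b})|$; since each admissible flip changes $|S_{s}|$ by $0$ or $\pm 1$, the sequence $|S_{s}(\omega_{n})|$ takes every integer value in $\{0,1,\dots,KL\}$. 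The desired bound $\max_{n}H(\omega_{n})\ge\Gamma$ then reduces to the edge-isoperimetric statement: there exists $n^{\star}\in\{1,\dots,KL-1\}$ such that every $S\subseteq\Lambda$ with $|S|=n^{\star}$ satisfies $|\partial S|\ge\Gamma$.

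The main obstacle is this isoperimetric lemma, because $n\mapsto\min_{|S|=n}|\partial S|$ is not monotone: compact corner rectangles (open) or small rectangles in the torus can attain boundary values strictly below $\Gamma$. I would prove it by the row/column-projection method---writing $r_{\ell}=|\{k:(k,\ell)\in S\}|$ and $c_{k}=|\{\ell:(k,\ell)\in S\}|$, and decomposing $|\partial S|$ into within-row transitions, within-column transitions, and (in the open case) exit contributions at partially filled rows or columns---coupled with a case analysis of the low-boundary shapes: strips and bands (with cardinalities in $\{mK\}$) and small rectangles (with cardinalities factoring as $ab$ with $a+b$ small). An appropriate $n^{\star}$ is then any intermediate cardinality outside both lists; standard factorization arguments, using $K\ge 5$, guarantee one exists. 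The periodic case is the most delicate, since one must simultaneously exclude bands and a wider family of thin rectangles of perimeter close to $2K$.
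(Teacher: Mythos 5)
Your upper-bound construction is, in substance, the paper's canonical row-by-row sweep, and your energy bookkeeping checks out in both boundary cases. For the lower bound you take a genuinely different route. The paper observes the pointwise domination $H(\sigma)\ge H_{\mathrm{Potts}}(\sigma)$ (valid because $(\sigma(x)-\sigma(y))^2\ge\mathbf{1}\{\sigma(x)\ne\sigma(y)\}$ on $\{-1,0,+1\}$), which immediately transfers communication heights and lets it quote the known barrier $\Gamma$ for the $3$-state Potts model from \cite{N-Z}. You instead run a direct Peierls/edge-isoperimetric argument: track $|S_s(\omega_n)|$ along a path, invoke the discrete intermediate-value property, and land on a cardinality $n^\star$ at which every set has boundary at least $\Gamma$. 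Both routes are valid; the paper's is shorter because it outsources the combinatorics to the Potts reference (which itself rests on a similar isoperimetric fact), while yours is self-contained but makes the isoperimetric lemma the visible crux. Two cautions on that crux: (i) the statement you need is universal over \emph{all} subsets of cardinality $n^\star$, not merely over bands and corner rectangles, so excluding those shapes by cardinality alone is not enough — you really do need the full compression/projection argument you gesture at, with a careful classification of the small-boundary minimizers; (ii) in the periodic case the wrap-around bands make the profile plateau at $2K$ over a long range of cardinalities, and the transition between thin-rectangle minimizers and band minimizers is delicate, so the existence of a suitable $n^\star$ should be justified explicitly rather than asserted via ``standard factorization arguments.'' As written, your proposal is a correct plan but leaves the hardest lemma at sketch level.
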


The proof of Theorem \ref{t_Ebarrier} is provided in Section \ref{sec4.2}.

\subsubsection*{Large deviation-type results}

We first define the following concepts:
\begin{itemize}
\item For $\mathcal{A}\subseteq\mathcal{X}$, we denote by $\tau_{\mathcal{A}}$
the hitting time of the set $\mathcal{A}$. Subsequently, for $\mathbf{s}\in\mathcal{S}$,
the hitting times $\tau_{\mathcal{S}\setminus\{\mathbf{s}\}}$ and
$\tau_{\mathbf{s}'}$, $\mathbf{s}'\in\mathcal{S}\setminus\{\mathbf{s}\}$
are called the \textit{(metastable) transition times} starting from
$\mathbf{s}$.
\item The \textit{mixing time} with respect to $\epsilon\in(0,\,1)$ is
defined by
\[
t_{\beta}^{\mathrm{mix}}(\epsilon)=\min\big\{ t\ge0:\max_{\sigma\in\mathcal{X}}\Vert\mathbb{P}_{\sigma}^{\beta}[\sigma_{\beta}(t)\in\cdot]-\mu_{\beta}(\cdot)\Vert_{\mathrm{TV}}\le\epsilon\big\},
\]
where $\Vert\cdot\Vert_{\mathrm{TV}}$ denotes the total variation
distance (cf. \cite[Chapter 4]{L-P-W}).
\item We denote by $\lambda_{\beta}$ the \textit{spectral gap} of our dynamics
(cf. \cite[Chapter 12]{L-P-W}).
\end{itemize}
\begin{thm}[Large deviation-type results]
\label{t_LDTresults} The following statements hold.
\begin{enumerate}
\item \textbf{(Transition time)} For all $\mathbf{s},\,\mathbf{s}'\in\mathcal{S}$
and $\epsilon>0$, we have
\begin{equation}
\lim_{\beta\rightarrow\infty}\mathbb{P}_{\mathbf{s}}^{\beta}[e^{\beta(\Gamma-\epsilon)}<\tau_{\mathcal{S}\setminus\{\mathbf{s}\}}\le\tau_{\mathbf{s}'}<e^{\beta(\Gamma+\epsilon)}]=1,\label{e_tt1}
\end{equation}
\begin{equation}
\lim_{\beta\rightarrow\infty}\frac{1}{\beta}\log\mathbb{E}_{\mathbf{s}}^{\beta}[\tau_{\mathcal{S}\setminus\{\mathbf{s}\}}]=\lim_{\beta\rightarrow\infty}\frac{1}{\beta}\log\mathbb{E}_{\mathbf{s}}^{\beta}[\tau_{\mathbf{s}'}]=\Gamma.\label{e_tt2}
\end{equation}
Moreover, under $\mathbb{P}_{\mathbf{s}}^{\beta}$, as $\beta\rightarrow\infty$,
\begin{equation}
\frac{\tau_{\mathcal{S}\setminus\{\mathbf{s}\}}}{\mathbb{E}_{\mathbf{s}}^{\beta}[\tau_{\mathcal{S}\setminus\{\mathbf{s}\}}]}\rightharpoonup\mathrm{Exp}(1)\;\;\;\;\text{and}\;\;\;\;\frac{\tau_{\mathbf{s}'}}{\mathbb{E}_{\mathbf{s}}^{\beta}[\tau_{\mathbf{s}'}]}\rightharpoonup\mathrm{Exp}(1),\label{e_tt3}
\end{equation}
where $\mathrm{Exp}(1)$ represents the exponential distribution with
parameter $1$.
\item \textbf{(Mixing time)} For all $\epsilon\in(0,\,1/2)$, the mixing
time $t_{\beta}^{\mathrm{mix}}(\epsilon)$ satisfies 
\[
\lim_{\beta\rightarrow\infty}\frac{1}{\beta}\log t_{\beta}^{\mathrm{mix}}(\epsilon)=\Gamma.
\]
\item \textbf{(Spectral gap)} There exist constants $0<c_{1}=c_{1}(K,\,L)\le c_{2}=c_{2}(K,\,L)$
such that 
\[
c_{1}e^{-\beta\Gamma}\le\lambda_{\beta}\le c_{2}e^{-\beta\Gamma}.
\]
\end{enumerate}
\end{thm}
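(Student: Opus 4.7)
My plan is to derive all three parts from the general framework of \cite{N-Z-B}, which converts information about the energy landscape of $H$ directly into the required estimates. Two inputs are needed. First, the communication height between distinct ground states must equal $\Gamma$, which is precisely Theorem \ref{t_Ebarrier}. Second, every non-ground-state configuration $\sigma\in\mathcal{X}\setminus\mathcal{S}$ must satisfy the stability bound $V_{\sigma}:=\Phi(\sigma,\,\mathcal{S})-H(\sigma)<\Gamma$, i.e., from each such $\sigma$ there exists a path to $\mathcal{S}$ whose maximum energy is strictly less than $H(\sigma)+\Gamma$. Together these conditions guarantee that $\mathcal{S}$ is exactly the set of metastable states at scale $e^{\beta\Gamma}$ and that no deeper well exists.

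\textbf{The stability estimate.} The bulk of the work is establishing $V_{\sigma}<\Gamma$ for every $\sigma\notin\mathcal{S}$. I would split configurations into two groups. For $\sigma$ lying in the basin of some $\mathbf{s}\in\mathcal{S}$ (meaning that the minority spins relative to $\mathbf{s}$ form only small clusters), I would construct an explicit energy-decreasing path to $\mathbf{s}$ by flipping boundary sites so as to shrink the minority clusters, each step costing at most a local uphill move strictly less than $\Gamma$. For $\sigma$ lying on or near the saddle plateau, I would use the classification of bulk and edge typical configurations from Section \ref{sec6} to descend into either of the two basins it separates. A useful organizing principle is that the plateau consists of saddles of energy exactly $\Gamma$, so any $\sigma$ with $H(\sigma)<\Gamma$ sits strictly below the plateau and can be steered downhill, while for $H(\sigma)\ge\Gamma$ the bound $V_{\sigma}<\Gamma$ follows automatically once a descending path from the plateau is produced.

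\textbf{Deriving the three statements.} With Theorem \ref{t_Ebarrier} and the stability bound in hand, each assertion follows by invoking the corresponding general result from \cite{N-Z-B}. For the transition time, the logarithmic asymptotics \eqref{e_tt1} and \eqref{e_tt2} come from the standard Freidlin--Wentzell-type upper and lower bounds on hitting times of $\mathcal{S}$, while the exponential law \eqref{e_tt3} follows from the asymptotic independence of successive excursions out of the starting well, proved via the strong Markov property together with recurrence to $\mathcal{S}$ at scale $o(e^{\beta\Gamma})$. The inequality $\tau_{\mathcal{S}\setminus\{\mathbf{s}\}}\le\tau_{\mathbf{s}'}$ is immediate, and the matching logarithmic scale for $\tau_{\mathbf{s}'}$ follows because, once $\mathcal{S}\setminus\{\mathbf{s}\}$ is reached, the symmetry between ground states and recurrence ensure that $\mathbf{s}'$ is reached in a time negligible on the exponential scale. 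For the mixing time and the spectral gap, I would apply the general equivalences which, under the recurrence and depth conditions just established, identify both quantities up to constants in the exponent with the slowest metastable transition between ground states, yielding the logarithmic asymptotics in part (2) and the two-sided exponential bound in part (3).

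\textbf{Main obstacle.} The principal difficulty is the stability estimate itself. In contrast to the classical Ising/Potts setting there are no isolated critical configurations, and the barrier between basins is the massive flat plateau. Verifying $V_{\sigma}<\Gamma$ for configurations on the boundary of this plateau---the edge typical configurations of Section \ref{sec6}---is delicate, especially under periodic boundary conditions where the plateau's combinatorial structure is described only through an auxiliary Markov chain on subtrees of a ladder graph (cf.\ Remark \ref{r_bdry2}). This landscape analysis is the technical heart of the proof and the main novelty needed to feed the general metastability machinery of \cite{N-Z-B} in our setting.
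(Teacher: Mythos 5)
Your proposal follows essentially the same route as the paper: reduce to (i) the energy barrier computation of Theorem \ref{t_Ebarrier} and (ii) the stability bound $\Phi(\sigma,\mathcal{S})-H(\sigma)<\Gamma$ for $\sigma\notin\mathcal{S}$, then feed both into the general pathwise machinery of \cite{N-Z-B} to obtain the three statements. The paper packages (ii) as Lemma \ref{l_depth} and then gives a two-line proof of the theorem by citing \cite{N-Z,N-Z-B}.

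Where you differ is that you propose to \emph{prove} the stability bound directly by constructing explicit energy-controlled paths, whereas the paper sidesteps this entirely: Lemma \ref{l_depth} is justified by observing that the identical valley-depth statement was already proved for the $3$-state Potts model in \cite[Theorem 2.1]{N-Z}, and the paper asserts that the same argument transfers verbatim (with a pointer to the explicit path construction in \cite[Lemma 6.11]{Kim-Seo Ising-Potts}). Your direct construction is a perfectly legitimate alternative, and is in fact the substance behind those citations, but be aware of one imprecision in your sketch: you claim that for $H(\sigma)\geq\Gamma$ the bound $V_{\sigma}<\Gamma$ ``follows automatically once a descending path from the plateau is produced.'' This conflates ``$H(\sigma)\geq\Gamma$'' with ``$\sigma$ lies on or near the saddle plateau.'' A generic configuration with $H(\sigma)\geq\Gamma$ (say, many small clusters of all three spins scattered over $\Lambda$) is nowhere near $\mathcal{B}_{\Gamma}^{a,b}$ or $\mathcal{O}^{a}$, and one still has to produce from it a path to $\mathcal{S}$ whose maximal energy stays below $H(\sigma)+\Gamma$. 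The standard way to do this (and what \cite[Theorem 2.1]{N-Z} does) is to shrink the minority clusters corner by corner, verifying that each corner erosion costs at most $\Gamma-1$ in excess energy; your ``small-cluster'' case should be extended to cover all $\sigma\notin\mathcal{S}$ rather than just those ``in the basin of some $\mathbf{s}$.'' With that fix your argument matches the intended proof.
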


\begin{rem}
\label{r_LDTresults}The connection between Theorems \ref{t_Ebarrier}
and \ref{t_LDTresults} is that the concepts discussed in Theorem
\ref{t_LDTresults} (the transition time, mixing time, and inverse
spectral gap) have an exponential scale with respect to the inverse
temperature $\beta\rightarrow\infty$, and the precise scale is the
energy barrier $\Gamma$ between the ground states that are determined
in Theorem \ref{t_Ebarrier}.
\end{rem}

\begin{rem}
\label{r_bdry}We remark that in Theorem \ref{t_LDTresults}, the
only difference between the two boundary types (periodic and open)
relates to the exact value of $\Gamma$, whereas the other features
regarding the three concepts are identical. Thus, we state that they
share the same \textit{exponential features} in the study of metastability.
However, crucial differences between them arise in more quantitative
analyses of the metastable transitions, which are presented in Section
\ref{sec2.3}. That is, the \textit{sub-exponential prefactor} differs
between the two boundary types because it depends on the number of
possible metastable transition paths between the ground states. The
reason for this difference is briefly discussed in Section \ref{sec9}.
\end{rem}

The proof of Theorem \ref{t_LDTresults} is provided in Section \ref{sec4.4}.

\subsubsection*{Metastable transition paths between ground states}

We obtain the following theorem for the metastable transition paths.
We remark that part (1) of Theorem \ref{t_eqp} implies the same behavior
of the metastable transition from $\mathbf{-1}$ to $\mathbf{+1}$
as that demonstrated in \cite[Proposition 2.1]{L-Le}, where the authors
investigated the case of $\lambda=0$ and $h>0$.
\begin{thm}[Transition paths]
\label{t_eqp} We have the following asymptotics for the metastable
transitions:
\begin{enumerate}
\item Starting from $\mathbf{-1}$, the chain must visit $\mathbf{0}$ on
its way to visiting $\mathbf{+1}$:
\[
\lim_{\beta\rightarrow\infty}\mathbb{P}_{\mathbf{-1}}^{\beta}[\tau_{\mathbf{0}}<\tau_{\mathbf{+1}}]=1.
\]
Similarly, we have $\lim_{\beta\rightarrow\infty}\mathbb{P}_{\mathbf{+1}}^{\beta}[\tau_{\mathbf{0}}<\tau_{\mathbf{-1}}]=1$.
\item Starting from $\mathbf{0}$, the probability of hitting $\mathbf{-1}$
before $\mathbf{+1}$ is equal to the opposite case; that is,
\[
\lim_{\beta\rightarrow\infty}\mathbb{P}_{\mathbf{0}}^{\beta}[\tau_{\mathbf{-1}}<\tau_{\mathbf{+1}}]=\lim_{\beta\rightarrow\infty}\mathbb{P}_{\mathbf{0}}^{\beta}[\tau_{\mathbf{+1}}<\tau_{\mathbf{-1}}]=\frac{1}{2}.
\]
\end{enumerate}
\end{thm}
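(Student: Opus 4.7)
The plan is to reduce Theorem \ref{t_eqp} to (i) the energy landscape analysis underlying Theorem \ref{t_Ebarrier} and (ii) the spin-flip symmetry $-1 \leftrightarrow +1$ noted in Remark \ref{r_modelsym}.

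For part (1), the $-1 \leftrightarrow +1$ symmetry reduces the two claims to a single one, say $\mathbb{P}_{\mathbf{-1}}^{\beta}[\tau_{\mathbf{+1}} < \tau_{\mathbf{0}}] \to 0$. On the event $\{\tau_{\mathbf{+1}} < \tau_{\mathbf{0}}\}$ the chain follows a path from $\mathbf{-1}$ to $\mathbf{+1}$ entirely inside $\mathcal{X} \setminus \{\mathbf{0}\}$, so the key step is a \emph{strict barrier} estimate
\[
\Phi^{\prime} := \min \max_{0 \le n \le N} H(\omega_n) \ge \Gamma + 1,
\]
where the minimum is taken over all paths $(\omega_n)_{n=0}^N$ in $\mathcal{X} \setminus \{\mathbf{0}\}$ with $\omega_0 = \mathbf{-1}$ and $\omega_N = \mathbf{+1}$. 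Granted this, the standard pathwise large-deviation bound (in the spirit of the proof of \eqref{e_tt1}) yields $\mathbb{P}_{\mathbf{-1}}^{\beta}[\tau_{\mathbf{+1}} < \tau_{\mathbf{0}}] \le e^{-\beta(\Phi^{\prime} - \Gamma - \epsilon)} \to 0$ for any fixed $\epsilon > 0$.

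To establish the strict barrier, I would argue as follows. Any path avoiding $\mathbf{0}$ must at some intermediate stage contain at least one $+1$-site while at least one $-1$-site still persists; hence it carries at least two distinct non-monochromatic interfaces, some combination of $(-1,0)$-, $(0,+1)$-, and $(-1,+1)$-type edges. By \eqref{e_H}, a $(-1,0)$ or $(0,+1)$ edge contributes $1$ to the energy whereas a $(-1,+1)$ edge contributes $4$. Revisiting the perimeter-counting arguments that give $\Gamma_{-1,0} = \Gamma_{0,+1} = \Gamma$ in Theorem \ref{t_Ebarrier} and summing the contributions of the two coexisting interfaces, one obtains that such an intermediate configuration has energy at least $\Gamma + 1$. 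A short case analysis then separates the situation where the $+1$-cluster and $-1$-cluster are disjoint from the situation where they touch, and (under periodic boundary conditions) additionally handles interfaces that wrap the torus.

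For part (2), I would use the involution $\Psi \colon \mathcal{X} \to \mathcal{X}$ defined by $(\Psi \sigma)(x) = -\sigma(x)$. By \eqref{e_H} it preserves $H$, hence also $\mu_\beta$ and the rates $c_\beta$; it fixes $\mathbf{0}$ and exchanges $\mathbf{-1}$ and $\mathbf{+1}$. Consequently the law of the pair $(\tau_{\mathbf{-1}}, \tau_{\mathbf{+1}})$ under $\mathbb{P}_{\mathbf{0}}^{\beta}$ is invariant under swapping the two coordinates, so the two probabilities in question are equal for every $\beta$. Since the hitting times of distinct states are a.s.\ distinct in continuous time, and since Theorem \ref{t_LDTresults}(1) applied to $\mathbf{s} = \mathbf{0}$ ensures that $\min(\tau_{\mathbf{-1}}, \tau_{\mathbf{+1}}) = \tau_{\mathcal{S} \setminus \{\mathbf{0}\}}$ is finite with probability tending to $1$, the two equal probabilities sum to $1 + o(1)$ and each converges to $1/2$. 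The principal obstacle throughout is the strict barrier estimate in part (1); the rest is symmetry together with a routine large-deviation bound, and I expect the estimate to follow with modest extra work from the refined landscape description that is already needed for Theorem \ref{t_Ebarrier}.
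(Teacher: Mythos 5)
Your part (2) is correct and matches the paper's approach: the paper dismisses it as ``obvious from the model symmetry,'' and you supply the details (the $\sigma \mapsto -\sigma$ involution preserves $H$ and $c_\beta$, fixes $\mathbf{0}$, exchanges $\mathbf{-1}$ and $\mathbf{+1}$, and irreducibility plus continuous time give $\mathbb{P}_{\mathbf{0}}^{\beta}[\tau_{\mathbf{-1}} < \tau_{\mathbf{+1}}] + \mathbb{P}_{\mathbf{0}}^{\beta}[\tau_{\mathbf{+1}} < \tau_{\mathbf{-1}}] = 1$ exactly).

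Part (1), however, has a genuine gap: the strict barrier claim
\[
\Phi' := \min_{\substack{\text{paths in } \mathcal{X}\setminus\{\mathbf{0}\}\\ \text{from } \mathbf{-1} \text{ to } \mathbf{+1}}} \max_n H(\omega_n) \ge \Gamma + 1
\]
is \emph{false}; in fact $\Phi' = \Gamma$. Here is an explicit counterexample (open boundary, $\Gamma = K+1$). Follow a canonical path from $\mathbf{-1}$ down to the pre-canonical configuration $\zeta_{L-1,K-1}^{++}$, which has a single $-1$ at the corner site $(K,L)$ and all other spins $0$; its energy is $2$. Now flip that one site directly from $-1$ to $+1$ (a legal single-site Metropolis move), landing on the configuration with a single $+1$ corner and all else $0$, still energy $2$. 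Then follow a canonical path from there to $\mathbf{+1}$. The whole path stays at energy $\le \Gamma$ and never visits $\mathbf{0}$. Notice this also kills the intermediate step of your argument: the path never has $-1$ and $+1$ spins coexisting, so you cannot even appeal to ``two coexisting interfaces.'' And even if you forced the coexistence (e.g.\ add a couple of $+1$ sites before flipping the lone $-1$), the two clusters can both be microscopic (energy $2+2=4 \ll \Gamma+1$); there is no reason the two interface contributions should add up to anything near $\Gamma$, let alone exceed it. This is precisely the ``massive flat saddle plateau'' phenomenon the paper emphasizes: $\mathbf{0}$ is not a bottleneck that every $\Gamma$-path must thread through.

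The paper's proof works around this in two steps. First, it proves (via the typical/gateway configuration machinery of Sections \ref{sec5}--\ref{sec6}, specifically Proposition \ref{p_typ}, Remark \ref{r_typedge}, and Lemma \ref{l_gate}) that any $\Gamma$-path from $\mathbf{-1}$ to $\mathbf{+1}$ must enter the \emph{neighborhood} $\mathcal{N}(\mathbf{0})$ --- a much larger set than $\{\mathbf{0}\}$, containing all configurations reachable from $\mathbf{0}$ by $(\Gamma-1)$-paths, including ones with small $-1$ and $+1$ droplets. Second, it invokes Lemma \ref{l_eqp} to show that from any state in $\mathcal{N}(\mathbf{0})$ the process hits $\mathbf{0}$ before $\mathbf{+1}$ with probability $1 - O(e^{-\beta})$. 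The strong Markov property then concludes. The second step is the one your scheme elides, and it cannot be elided: the correct barrier statement is at the level of $\mathcal{N}(\mathbf{0})$, not $\{\mathbf{0}\}$, and the passage from the former to the latter is a separate probabilistic estimate, not a deterministic landscape fact.
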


Using the potential-theoretic terminology (which is reviewed in Section
\ref{sec3}), the above theorem is equivalent to
\[
\lim_{\beta\rightarrow\infty}h_{\mathbf{0},\mathbf{+1}}^{\beta}(\mathbf{-1})=\lim_{\beta\rightarrow\infty}h_{\mathbf{0},\mathbf{-1}}^{\beta}(\mathbf{+1})=1\;\;\;\;\text{and}\;\;\;\;\lim_{\beta\rightarrow\infty}h_{\mathbf{-1},\mathbf{+1}}^{\beta}(\mathbf{0})=\frac{1}{2}.
\]
We remark that part (2) of Theorem \ref{t_eqp} is straightforward
based on the symmetry of our model (cf. Remark \ref{r_modelsym}).
The proof of part (1) of this theorem is presented in Section \ref{sec5.4}.

\subsection{\label{sec2.3}Main results: potential-theoretic results}

Whereas the preceding main results focused on the exponential estimates
(as $\beta\rightarrow\infty$) of the metastable quantities, the following
main results provide more quantitative analyses based on potential-theoretic
methods. The \textit{Eyring--Kramers formula} (Theorem \ref{t_EK})
substantially generalizes \eqref{e_tt2}, and the \textit{Markov chain
reduction} (Theorem \ref{t_MC}), in the sense of \cite{B-L TM,B-L TM2},
describes the successive metastable transitions between the ground
states.

A crucial difference between the results in the current and preceding
subsections is that the quantitative results in this subsection \textit{are
dependent on the selection of the boundary conditions}. For simplicity,
\textit{we assume open boundary conditions in this subsection.} The
periodic case can be handled in a similar manner; thus, we briefly
discuss the periodic case in Section \ref{sec9}.

\subsubsection*{Eyring--Kramers formula}

The following result generalizes \eqref{e_tt2}, in the sense that
it characterizes the sub-exponential prefactor with respect to the
exponential factor $e^{\beta\Gamma}$ that appears in the quantities
in Theorem \ref{t_LDTresults}.
\begin{thm}[Eyring--Kramers law]
\label{t_EK} Under open boundary conditions on $\Lambda$, there
exists a constant $\kappa=\kappa(K,\,L)>0$ such that the following
estimates hold:
\begin{enumerate}
\item $\mathbb{E}_{\mathbf{-1}}^{\beta}[\tau_{\{\mathbf{0},\mathbf{+1}\}}]=\mathbb{E}_{\mathbf{+1}}^{\beta}[\tau_{\{\mathbf{-1},\mathbf{0}\}}]\simeq\kappa e^{\beta\Gamma}$
and $\mathbb{E}_{\mathbf{0}}^{\beta}[\tau_{\{\mathbf{-1},\mathbf{+1}\}}]\simeq\frac{\kappa}{2}e^{\beta\Gamma}$.
\item $\mathbb{E}_{\mathbf{-1}}^{\beta}[\tau_{\mathbf{0}}]=\mathbb{E}_{\mathbf{+1}}^{\beta}[\tau_{\mathbf{0}}]\simeq\kappa e^{\beta\Gamma}$.
\item $\mathbb{E}_{\mathbf{0}}^{\beta}[\tau_{\mathbf{-1}}]=\mathbb{E}_{\mathbf{0}}^{\beta}[\tau_{\mathbf{+1}}]\simeq2\kappa e^{\beta\Gamma}$.
\item $\mathbb{E}_{\mathbf{-1}}^{\beta}[\tau_{\mathbf{+1}}]=\mathbb{E}_{\mathbf{+1}}^{\beta}[\tau_{\mathbf{-1}}]\simeq3\kappa e^{\beta\Gamma}$.
\end{enumerate}
Moreover, the constant $\kappa$ satisfies (cf. \eqref{e_KL})
\begin{equation}
\lim_{K\rightarrow\infty}\frac{\kappa(K,\,L)}{KL}=\begin{cases}
1/4 & \text{if }K<L,\\
1/8 & \text{if }K=L.
\end{cases}\label{e_EKkappa}
\end{equation}
\end{thm}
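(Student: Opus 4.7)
The plan is to combine the standard potential-theoretic identity
\begin{equation*}
\mathbb{E}_{\mathbf{s}}^{\beta}[\tau_{\mathcal{B}}] = \frac{1}{\mu_\beta(\mathbf{s}) \,\mathrm{cap}_\beta(\mathbf{s}, \mathcal{B})} \sum_{\sigma \in \mathcal{X}} \mu_\beta(\sigma) \, h_{\mathbf{s}, \mathcal{B}}^\beta(\sigma)
\end{equation*}
(reviewed in Section \ref{sec3}) with the sharp capacity estimates of Theorem \ref{t_Cap} (constructed in Sections \ref{sec7}--\ref{sec8}) and the equilibrium-potential limits of Theorem \ref{t_eqp}. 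Item (1) is a direct application of the identity, while items (2)--(4) are deduced from item (1) by strong-Markov decomposition augmented by Theorem \ref{t_eqp}.

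For item (1), the numerator of the displayed identity converges to $1/3$: by \eqref{e_Zmu} the Gibbs weight concentrates on $\mathcal{S}$, on which $h_{\mathbf{s}, \mathcal{B}}^\beta$ equals $1$ at $\mathbf{s}$ and $0$ on $\mathcal{B}$, so only the term $\sigma = \mathbf{s}$ contributes to leading order; the contribution from $\mathcal{X} \setminus \mathcal{S}$ is bounded by $\mu_\beta(\mathcal{X} \setminus \mathcal{S}) = O(e^{-2\beta})$ since $|\mathcal{X}|$ is fixed and the smallest positive value of $H$ equals $2$ (a single corner flip $0 \leftrightarrow \pm 1$ under open boundaries). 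Combined with $\mu_\beta(\mathbf{s}) \to 1/3$ and the estimates $\mathrm{cap}_\beta(\mathbf{-1}, \{\mathbf{0}, \mathbf{+1}\}) \simeq \kappa^{-1} e^{-\beta \Gamma}$ and $\mathrm{cap}_\beta(\mathbf{0}, \{\mathbf{-1}, \mathbf{+1}\}) \simeq 2 \kappa^{-1} e^{-\beta \Gamma}$ from Theorem \ref{t_Cap}, this yields item (1); the factor $1/2$ for the starting point $\mathbf{0}$ reflects the doubled capacity arising from two symmetric escape channels toward $\mathbf{-1}$ and $\mathbf{+1}$, whereas from $\mathbf{-1}$ only the channel through $\mathbf{0}$ is effective.

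For items (2)--(4), I use strong-Markov decomposition. Writing
\begin{equation*}
\mathbb{E}_{\mathbf{-1}}^{\beta}[\tau_{\mathbf{0}}] = \mathbb{E}_{\mathbf{-1}}^{\beta}[\tau_{\{\mathbf{0},\mathbf{+1}\}}] + \mathbb{P}_{\mathbf{-1}}^{\beta}[\tau_{\mathbf{+1}} < \tau_{\mathbf{0}}] \, \mathbb{E}_{\mathbf{+1}}^{\beta}[\tau_{\mathbf{0}}]
\end{equation*}
and using the symmetry $\mathbb{E}_{\mathbf{+1}}^\beta[\tau_{\mathbf{0}}] = \mathbb{E}_{\mathbf{-1}}^\beta[\tau_{\mathbf{0}}]$ (Remark \ref{r_modelsym}) together with $\mathbb{P}_{\mathbf{-1}}^\beta[\tau_{\mathbf{+1}} < \tau_{\mathbf{0}}] = o(1)$ (Theorem \ref{t_eqp}(1)) yields item (2). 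For item (3), the analogous decomposition
\begin{equation*}
\mathbb{E}_{\mathbf{0}}^\beta[\tau_{\mathbf{-1}}] = \mathbb{E}_{\mathbf{0}}^\beta[\tau_{\{\mathbf{-1},\mathbf{+1}\}}] + \mathbb{P}_{\mathbf{0}}^\beta[\tau_{\mathbf{+1}} < \tau_{\mathbf{-1}}] \, \mathbb{E}_{\mathbf{+1}}^\beta[\tau_{\mathbf{-1}}],
\end{equation*}
the additive identity $\mathbb{E}_{\mathbf{+1}}^\beta[\tau_{\mathbf{-1}}] \simeq \mathbb{E}_{\mathbf{+1}}^\beta[\tau_{\mathbf{0}}] + \mathbb{E}_{\mathbf{0}}^\beta[\tau_{\mathbf{-1}}]$ (valid since the chain from $\mathbf{+1}$ visits $\mathbf{0}$ before $\mathbf{-1}$ with probability $1-o(1)$ by Theorem \ref{t_eqp}(1)), Theorem \ref{t_eqp}(2), and item (2) together produce the linear equation $\mathbb{E}_{\mathbf{0}}^\beta[\tau_{\mathbf{-1}}] \simeq \kappa e^{\beta\Gamma} + \tfrac{1}{2} \mathbb{E}_{\mathbf{0}}^\beta[\tau_{\mathbf{-1}}]$, whose solution is item (3). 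Item (4) is then $\mathbb{E}_{\mathbf{-1}}^\beta[\tau_{\mathbf{+1}}] \simeq \mathbb{E}_{\mathbf{-1}}^\beta[\tau_{\mathbf{0}}] + \mathbb{E}_{\mathbf{0}}^\beta[\tau_{\mathbf{+1}}] \simeq (1 + 2)\kappa e^{\beta\Gamma}$, by the same additivity.

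Finally, the asymptotic \eqref{e_EKkappa} is read off from the explicit capacity formula established in Section \ref{sec8}: $\kappa^{-1}$ is proportional to the effective number of distinct ``bridges'' realizing a saddle crossing, which scales as $KL$ (one factor from translational positions of the bridge, the other from the sequence of configurations along which it grows), with the extra factor of $2$ in the square case $K=L$ coming from the bridging being possible along either axis direction. The principal obstacle of the whole program is therefore not the present theorem, which is an essentially algebraic post-processing, but rather the capacity estimate of Theorem \ref{t_Cap}: on the massive flat saddle plateau there is no single critical configuration, so the Dirichlet and Thomson variational principles must be applied with test functions and test flows carefully engineered to track the entire collection of typical configurations identified in Section \ref{sec6}.
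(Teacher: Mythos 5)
Your proof of items (1)--(4) is correct but follows a partially different route from the paper. For item (1) you apply the mean-hitting-time identity $\mathbb{E}_{\mathbf{s}}^\beta[\tau_{\mathcal{B}}] = \mathrm{Cap}_\beta(\mathbf{s},\mathcal{B})^{-1}\sum_\sigma\mu_\beta(\sigma)h^\beta_{\mathbf{s},\mathcal{B}}(\sigma)$ together with parts (1) and (3) of Theorem \ref{t_Cap}, which is exactly what the paper does. For items (2)--(4), however, the paper applies the same identity separately to the pairs $(\mathbf{-1},\mathbf{0})$, $(\mathbf{0},\mathbf{-1})$, and $(\mathbf{-1},\mathbf{+1})$, using Theorem \ref{t_eqp} only to evaluate the numerator $\sum_\sigma\mu_\beta(\sigma)h^\beta$, and therefore needs all four capacity estimates in Theorem \ref{t_Cap}. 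You instead extract items (2)--(4) from item (1) by strong-Markov decompositions of $\tau_{\mathbf{0}}$, $\tau_{\mathbf{-1}}$, $\tau_{\mathbf{+1}}$ augmented by the $\pm1$ symmetry and Theorem \ref{t_eqp}. This is a valid alternative that uses only capacity estimates (1) and (3) of Theorem \ref{t_Cap} and trades the remaining two for an elementary renewal/first-step analysis; the linear equation $x \simeq y + x/2$ you solve for item (3) closes correctly because the coefficient $\mathbb{P}^\beta_{\mathbf{0}}[\tau_{\mathbf{+1}}<\tau_{\mathbf{-1}}]\to 1/2$ stays bounded away from $1$.

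One genuine error is in your heuristic justification of the limit \eqref{e_EKkappa}. You assert that $\kappa^{-1}$ is proportional to an ``effective number of bridges'' that scales like $KL$; this would give $\kappa \sim c/(KL)$, which contradicts \eqref{e_EKkappa} (and the explicit formula $\kappa = \mathfrak{b}+2\mathfrak{e}$ with $\mathfrak{b} = K(L-4)/4$). The correct picture is the opposite, a \emph{series} rather than parallel effect: the saddle plateau is a nearly one-dimensional chain of $\Theta(KL)$ configurations all at energy $\Gamma$, so the escape is a random-walk traversal of a path of length $\Theta(KL)$, and this \emph{multiplies} the transition time by $KL$, i.e., $\kappa\propto KL$. (The factor-of-two correction when $K=L$ is a genuine parallel effect, but the $KL$ scaling is not.) You would also be well advised to cite Lemma \ref{l_auxproc} explicitly, since \eqref{e_EKkappa} rests on the boundedness of the edge constant $\mathfrak{e}=\mathfrak{e}(K)$, which is precisely what that lemma provides. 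None of this affects items (1)--(4), whose proof is sound.
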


Part (1) of Theorem \ref{t_EK} provides the estimate of $\mathbb{E}_{\mathbf{s}}^{\beta}[\tau_{\mathcal{S}\setminus\{\mathbf{s}\}}]$
for $\mathbf{s}\in\mathcal{S}$, which is the expected time for a
transition from $\mathbf{s}$ to another ground state. This is the
so-called \textit{Eyring--Kramers law} for the Metropolis dynamics.
The proof of Theorem \ref{t_EK} is discussed in Section \ref{sec3}.
\begin{rem}
The limit \eqref{e_EKkappa} provides the prefactor
estimate of the metastable transition times. According to Remark \ref{r_bdry},
it can be expected that in the periodic boundary case, a different
estimate on the prefactor $\kappa=\kappa(K,\,L)$ will be obtained.
This is indeed the case and the precise estimate in the periodic case
is \eqref{e_EKkappa-1} provided in Section \ref{sec9}. The asymptotic
factor difference between the conditions on the boundaries is $KL$,
which is fundamentally owing to the number of possible paths for the
canonical transitions (cf. Definition \ref{d_can}). We refer to Section
\ref{sec9} for a more detailed explanation of this comparison.
\end{rem}

\begin{rem}
\label{r_bdry2}A notable feature that only the open boundary model
possesses is that \textit{we can explicitly compute the constant $\kappa$},
which is provided in Definition \ref{d_const}. More specifically,
the edge constant $\mathfrak{e}=\mathfrak{e}(K)$ can be completely
characterized, which is described in Section \ref{secA} by \textit{solving
the symmetric recurrence formulas (cf. \eqref{e_auxproc2} and \eqref{e_auxproc3}).}
This is not the case in the periodic boundary case; we can clearly
characterize the asymptotic limit \eqref{e_EKkappa-1}, but we cannot
obtain such an explicit formula for the edge constant $\mathfrak{e}'=\mathfrak{e}'(K,\,L)$
(note that $\mathfrak{e}'$ depends on both $K$ and $L$). We overcome
this drawback in the periodic case by providing a sufficient upper
bound on $\mathfrak{e}'$ (cf. \eqref{e_e'}).
\end{rem}

\begin{rem}
\label{r_lambdah}We compare the precise asymptotics obtained in Theorem
\ref{t_EK} to those obtained in \cite[Propositions 2.4 and 2.5]{L-Le}
and \cite[Theorems 5 and 6]{C-N-Spi 17} for the case of $\lambda=0$
and $h>0$. The main observable difference is that the asymptotics
are dependent on the lattice size $K\times L$, which was not the
case in previous studies. This is because in our setting, canonical
metastable transitions (cf. Definition \ref{d_can}) occur by updating
the spins of the entire lattice line by line; each spin update of
a line constitutes a positive portion of the expected transition time.
Hence, the exact lattice size is relevant in this case. However, in
the case of $\lambda=0$ and $h>0$, the essence of the metastable
transition is the construction of a specific form of critical saddle
configurations. Following the formulation, the process rapidly proceeds
to the target ground state. Hence, the lattice only needs to be sufficiently
large to contain such critical configurations and the exact size is
irrelevant to the sharp transition time.
\end{rem}

\begin{rem}
An interesting phenomenon occurs in \cite[Propositions 2.4 and 2.5]{L-Le}
for the case of $\lambda=0$ and $h>0$, which is that the time scale
of the expected transition time $\mathbb{E}_{\mathbf{-1}}^{\beta}[\tau_{\mathbf{0}}]$
is larger than the time scale of $\mathbb{E}_{\mathbf{-1}}^{\beta}[\tau_{\mathbf{+1}}]$
and $\mathbb{E}_{\mathbf{0}}^{\beta}[\tau_{\mathbf{+1}}]$. This is
owing to the fact that the main contribution to the quantity $\mathbb{E}_{\mathbf{-1}}^{\beta}[\tau_{\mathbf{0}}]$
originates from the event that the process (starting from $\mathbf{-1}$)
first hits $\mathbf{+1}$ and subsequently arrives at $\mathbf{0}$,
which means that the valley with respect to $\mathbf{+1}$ is much
deeper than the others. This is not the case in our model, because
the valley depths are all equal to $\Gamma$ according to Theorem
\ref{t_Ebarrier}. Hence, we determine that all of the relevant expected
transition times share the same time scale, which is $e^{\beta\Gamma}$.
\end{rem}

\subsubsection*{Markov chain reduction}

In our model, the ground states in $\mathcal{S}$ have the same depth
of energy $0$. Moreover, Theorem \ref{t_Ebarrier} states that the
energy barriers between these are also identical as $\Gamma$. Therefore,
the metastable transitions between the ground states occur in the
same time scale $e^{\beta\Gamma}$. From this perspective, we attempt
to analyze all of these successive transitions simultaneously. The
general method for carrying this out is the Markov chain reduction
technique that was introduced in \cite{B-L TM,B-L TM2,B-L MG}. According
to this methodology, we prove that the process of the (properly accelerated)
metastable transitions converges to a certain Markov chain on the
ground states.

To explain this result, we first introduce the trace process on $\mathcal{S}$.
In view of Theorem \ref{t_EK}, the process needs to be accelerated
by the factor $e^{\beta\Gamma}$ to govern the metastable transitions
in the ordinary time scale. Hence, we denote by $\widehat{\sigma}_{\beta}(t)=\sigma_{\beta}(e^{\beta\Gamma}t)$,
$t\ge0$ the accelerated process. Subsequently, we define a random
time $T(t)$, $t\ge0$ as 
\[
T(t)=\int_{0}^{t}\mathbf{1}\{\widehat{\sigma}_{\beta}(u)\in\mathcal{S}\}du,
\]
which is the local time of the process $\widehat{\sigma}_{\beta}(\cdot)$
in $\mathcal{S}$. Let $S(t)$, $t\ge0$ be the generalized inverse
of $T$; that is,
\[
S(t)=\sup\{u\ge0:T(u)\le t\}.
\]
The \textit{trace process} $\{X_{\beta}(t)\}_{t\ge0}$ on the set
$\mathcal{S}$ is defined by
\begin{equation}
X_{\beta}(t)=\widehat{\sigma}_{\beta}(S(t)).\label{e_trace}
\end{equation}
Subsequently, the trace process $X_{\beta}(\cdot)$ is the continuous-time,
irreducible Markov chain on $\mathcal{S}$. We refer to \cite[Proposition 6.1]{B-L TM}
for the proof of this fact.

Thereafter, we define the limiting Markov chain $\{X(t)\}_{t\ge0}$
on $\mathcal{S}$ as the continuous-time Markov chain that is associated
with the transition rate

\begin{equation}
r_{X}(\mathbf{s},\,\mathbf{s}')=\begin{cases}
\kappa^{-1} & \text{if }\{\mathbf{s},\,\mathbf{s}'\}=\{\mathbf{-1},\,\mathbf{0}\}\text{ or }\{\mathbf{0},\,\mathbf{+1}\},\\
0 & \text{otherwise}.
\end{cases}\label{e_LMC}
\end{equation}

\begin{thm}[Markov chain reduction]
\label{t_MC} Under open boundary conditions on $\Lambda$, the following
statements hold.
\begin{enumerate}
\item For $\mathbf{s}\in\mathcal{S}$, the law of the Markov chain $X_{\beta}(\cdot)$
starting from $\mathbf{s}$ converges to the law of the limiting Markov
chain $X(\cdot)$ starting from $\mathbf{s}$ in the limit $\beta\rightarrow\infty$.
\item The accelerated process spends negligible time outside $\mathcal{S}$;
that is,
\[
\lim_{\beta\rightarrow\infty}\sup_{\mathbf{s}\in\mathcal{S}}\mathbb{E}_{\mathbf{s}}^{\beta}\Big[\int_{0}^{t}\mathbf{1}\{\widehat{\sigma}_{\beta}(u)\notin\mathcal{S}\}du\Big]=0.
\]
\end{enumerate}
\end{thm}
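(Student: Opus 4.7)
The plan is to apply the general martingale / potential-theoretic framework for Markov chain reduction developed in \cite{B-L TM, B-L TM2, B-L MG}. Within that framework the convergence of the trace process reduces to two verifications: (i) the sharp asymptotics of the accelerated jump rates of $X_\beta(\cdot)$ between the elements of $\mathcal{S}$ match the rates $r_X(\mathbf{s},\mathbf{s}')$ prescribed in \eqref{e_LMC}; and (ii) the accelerated process spends negligible time outside $\mathcal{S}$, which is precisely part~(2). I would prove (ii) first and then use it to extract (i) from the Eyring--Kramers estimates already in hand, finally invoking the general convergence theorem from \cite{B-L TM}.

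For part~(2), I would start from the equilibrium estimate. Since every $\sigma\in\mathcal{X}\setminus\mathcal{S}$ has at least one pair of disagreeing neighbors, $H(\sigma)\ge 1$ and hence by \eqref{e_Zmu}
\begin{equation*}
\mu_\beta(\mathcal{X}\setminus\mathcal{S})=O(e^{-\beta}),
\end{equation*}
so that under the stationary initial law the quantity in part~(2) is bounded by $t\cdot O(e^{-\beta})\to 0$. To lift this from $\mu_\beta$ to an arbitrary starting point $\mathbf{s}\in\mathcal{S}$, I would argue as in \cite[Section 6]{B-L TM2}: a Poincar\'e-type contraction based on the spectral gap estimate in Theorem~\ref{t_LDTresults}(3) shows that after time $o(e^{\beta\Gamma})$ the law of $\widehat{\sigma}_\beta(\cdot)$ starting at $\mathbf{s}$ is close in total variation to $\mu_\beta$ conditioned on (and renormalized within) the metastable valley around $\mathbf{s}$, on which the bound still holds.

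For part~(1), the jump rates of the trace process satisfy, by \cite[Proposition 6.1]{B-L TM} and up to corrections absorbed by (ii),
\begin{equation*}
e^{\beta\Gamma}\,r_\beta^{\mathrm{trace}}(\mathbf{s},\mathbf{s}')=\frac{e^{\beta\Gamma}\,\mathbb{P}_{\mathbf{s}}^{\beta}[\tau_{\mathbf{s}'}=\tau_{\mathcal{S}\setminus\{\mathbf{s}\}}]}{\mathbb{E}_{\mathbf{s}}^{\beta}[\tau_{\mathcal{S}\setminus\{\mathbf{s}\}}]}+o(1).
\end{equation*}
Plugging in Theorem~\ref{t_EK}(1) for the denominators and Theorem~\ref{t_eqp} for the numerators (the $\mathbf{+1}$-cases following from the $-1\leftrightarrow+1$ symmetry recorded in Remark~\ref{r_modelsym}), one reads off $\kappa^{-1}$ for the pairs $\{\mathbf{-1},\mathbf{0}\}$ and $\{\mathbf{0},\mathbf{+1}\}$, and $0$ for $\{\mathbf{-1},\mathbf{+1}\}$; in particular for $\mathbf{s}=\mathbf{0}$ the factor $\frac{1}{2}$ from Theorem~\ref{t_eqp}(2) combines with the factor $\frac{\kappa}{2}$ from Theorem~\ref{t_EK}(1) to again yield $\kappa^{-1}$. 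Convergence of finite-dimensional distributions then follows by the strong Markov property, while tightness is standard once the holding-time asymptotics are known.

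The main technical obstacle is verifying (ii) starting from a point mass rather than from stationarity: the bound on $\mu_\beta(\mathcal{X}\setminus\mathcal{S})$ is exponentially strong, but transferring it to arbitrary starting points requires controlling excursions of $\widehat{\sigma}_\beta(\cdot)$ away from $\mathcal{S}$. This is where the detailed knowledge of the energy landscape accumulated in the preceding sections is essential: the valley structure and, in particular, the fact that no configuration outside $\mathcal{S}$ lies at the top of an energy barrier equal to $\Gamma$ force excursions to return to $\mathcal{S}$ on a timescale much smaller than $e^{\beta\Gamma}$, which is precisely what is needed to close the argument.
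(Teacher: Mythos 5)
Your overall architecture is right: invoke the abstract convergence theorem from \cite{B-L TM}, which reduces part~(1) to convergence of the accelerated trace rates and requires part~(2) as the negligibility condition. However, your proposed proof of part~(2) has a genuine gap, and your route for part~(1) is more fragile than what is available.

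For part~(2), you correctly note that $\mu_\beta(\mathcal{X}\setminus\mathcal{S})$ decays exponentially, but the proposed lift from the stationary starting measure to a point mass at $\mathbf{s}$ via a spectral-gap/Poincar\'e contraction does not work as stated. Theorem~\ref{t_LDTresults}(3) gives $\lambda_\beta \asymp e^{-\beta\Gamma}$, so the full relaxation time is of the \emph{same} order $e^{\beta\Gamma}$ as the acceleration; after a macroscopic accelerated time $u>0$ the law started at $\mathbf{s}$ is not close to $\mu_\beta$, nor (without substantial extra work on local equilibration within a valley, which is not established in the paper) close to a conditioned version of it. The claim "after time $o(e^{\beta\Gamma})$ the law is close in total variation to $\mu_\beta$ conditioned on the valley" is exactly the kind of refined statement your argument would need to supply, and it is not free. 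The paper avoids all this by a one-line pointwise comparison that holds for \emph{every} fixed time $u$: for any event $A$,
\[
\mathbb{P}_{\mathbf{s}}^{\beta}[A]\le\frac{1}{\mu_\beta(\mathbf{s})}\sum_{\zeta\in\mathcal{X}}\mu_\beta(\zeta)\,\mathbb{P}_{\zeta}^{\beta}[A]=\frac{1}{\mu_\beta(\mathbf{s})}\mathbb{P}_{\mu_\beta}^{\beta}[A],
\]
and then by stationarity $\mathbb{P}_{\mu_\beta}^{\beta}[\sigma_\beta(u)\notin\mathcal{S}]=\mu_\beta(\mathcal{X}\setminus\mathcal{S})$; Fubini and \eqref{e_Zmu} finish. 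No mixing estimate is used or needed.

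For part~(1), the formula you write for $r_\beta^{\mathrm{trace}}(\mathbf{s},\mathbf{s}')$ is heuristic: the denominator should really be the accelerated local time at $\mathbf{s}$ during an excursion, not $\mathbb{E}_{\mathbf{s}}^{\beta}[\tau_{\mathcal{S}\setminus\{\mathbf{s}\}}]$, and making the $o(1)$ correction rigorous from part~(2) requires some care. The paper instead uses the exact identity \cite[Lemma~6.8]{B-L TM}, which expresses $\mu_\beta(\mathbf{s})\,r_\beta(\mathbf{s},\mathbf{s}')$ as a linear combination of $\mathrm{Cap}_\beta(\mathbf{s},\mathcal{S}\setminus\{\mathbf{s}\})$, $\mathrm{Cap}_\beta(\mathbf{s}',\mathcal{S}\setminus\{\mathbf{s}'\})$, and $\mathrm{Cap}_\beta(\{\mathbf{s},\mathbf{s}'\},\mathcal{S}\setminus\{\mathbf{s},\mathbf{s}'\})$, so Theorem~\ref{t_Cap} plus \eqref{e_Zmu} yields the rates directly, with no appeal to Theorems~\ref{t_EK} or~\ref{t_eqp} and no tightness discussion (that is handled by \cite[Theorem~2.7]{B-L TM}). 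Your route is salvageable but would need the rate formula proved, whereas the capacity identity gives it for free.
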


As the process spends most of its time in $\mathcal{S}$ according
to part (2) of Theorem \ref{t_MC}, the trace process $X_{\beta}(\cdot)$
on $\mathcal{S}$ indeed fully describes the process $\widehat{\sigma}_{\beta}(\cdot)$
in the limit $\beta\rightarrow\infty$. Based on this observation,
part (1) of Theorem \ref{t_MC} describes the successive metastable
transitions of the Metropolis dynamics. The proof of Theorem \ref{t_MC}
is discussed in Section \ref{sec3}.
\begin{rem}
\label{r_discretetime}In this study, we select the continuous-time
version of the Metropolis dynamics as in \cite{L-Le,L-Le-M 19,M-O}.
As an alternative, we may also select the discrete-time Metropolis
dynamics on the Blume--Capel model, as in \cite{C-N,C-N-Spi 17,C-O}.
In this case, the jump probability is defined as
\[
p_{\beta}(\sigma,\,\zeta)=\begin{cases}
\frac{1}{2|\Lambda|}e^{-\beta[H(\zeta)-H(\sigma)]_{+}} & \text{if }\zeta=\sigma^{x,a}\ne\sigma\text{ for some }x\in\Lambda\text{ and spin }a,\\
1-\sum_{\zeta':\,\zeta'\ne\sigma}p_{\beta}(\sigma,\,\zeta') & \text{if }\zeta=\sigma.
\end{cases}
\]
The only difference is that the process is $2|\Lambda|$ times slower
than the original continuous-time process. Therefore, Theorems \ref{t_Ebarrier},
\ref{t_LDTresults}, and \ref{t_eqp} hold without modification, whereas
Theorems \ref{t_EK} and \ref{t_MC} hold with $2|\Lambda|\kappa$
instead of $\kappa$. Rigorous verifications can be conducted in the
same manner, and thus, we omit the details.
\end{rem}

\section{\label{sec3}Outline of Proofs}

In this section, we provide an outline of the proofs of the main theorems
presented in Section \ref{sec2}. Henceforth, \textit{we assume that
the lattice $\Lambda$ is given open boundary conditions}; that is,
$\Lambda=\llbracket1,\,K\rrbracket\times\llbracket1,\,L\rrbracket\subseteq\mathbb{Z}^{2}$,
except in Section \ref{sec9}, where we briefly discuss the case of
periodic boundaries.

First, we introduce the potential-theoretic approach to metastability.
Thereafter, based on the methodologies, we reduce the proofs of Theorems
\ref{t_EK} and \ref{t_MC} to capacity estimates between the ground
states (cf. Theorem \ref{t_Cap}).

We review several potential-theoretic notions. The \textit{Dirichlet
form} $D_{\beta}(\cdot)$ is defined as follows for $f:\mathcal{X}\rightarrow\mathbb{R}$:
\begin{equation}
D_{\beta}(f)=\frac{1}{2}\sum_{\sigma,\zeta\in\mathcal{X}}\mu_{\beta}(\sigma)c_{\beta}(\sigma,\,\zeta)[f(\zeta)-f(\sigma)]^{2}.\label{e_Diri}
\end{equation}

\begin{defn}
\label{d_pot}Let $\mathcal{A}$ and $\mathcal{B}$ be disjoint and
non-empty subsets of $\mathcal{X}$. The \textit{equilibrium potential}
between $\mathcal{A}$ and $\mathcal{B}$ is the function $h_{\mathcal{A},\mathcal{B}}^{\beta}:\mathcal{X}\rightarrow\mathbb{R}$,
which is defined as
\begin{equation}
h_{\mathcal{A},\mathcal{B}}^{\beta}(\sigma)=\mathbb{P}_{\sigma}^{\beta}[\tau_{\mathcal{A}}<\tau_{\mathcal{B}}].\label{e_eqpdef}
\end{equation}
By definition, we immediately obtain
\begin{equation}
h_{\mathcal{A},\mathcal{B}}^{\beta}\equiv1\text{ on }\mathcal{A},\;\;\;\;h_{\mathcal{A},\mathcal{B}}^{\beta}\equiv0\text{ on }\mathcal{B},\;\;\;\;0\le h_{\mathcal{A},\mathcal{B}}^{\beta}\le1,\;\;\;\;\text{and}\;\;\;\;h_{\mathcal{A},\mathcal{B}}^{\beta}=1-h_{\mathcal{B},\mathcal{A}}^{\beta}.\label{e_eqpprop}
\end{equation}
Subsequently, we define the \textit{capacity} between $\mathcal{A}$
and $\mathcal{B}$ as
\begin{equation}
\mathrm{Cap}_{\beta}(\mathcal{A},\,\mathcal{B})=D_{\beta}(h_{\mathcal{A},\mathcal{B}}^{\beta}).\label{e_Capdef}
\end{equation}
\end{defn}

Moreover, we define the following constants that characterize the
constant $\kappa$ that appears in Theorem \ref{t_EK}.
\begin{defn}
\label{d_const}We define the constants $\mathfrak{b}$, $\mathfrak{e}$,
and $\kappa$.
\begin{itemize}
\item The bulk and edge constants $\mathfrak{b}=\mathfrak{b}(K,\,L)$ and
$\mathfrak{e}=\mathfrak{e}(K)$ are defined as 
\begin{equation}
\mathfrak{b}=\begin{cases}
\frac{K(L-4)}{4} & \text{if }K<L\\
\frac{K(L-4)}{8} & \text{if }K=L
\end{cases}\;\;\;\;\text{and}\;\;\;\;\mathfrak{e}=\begin{cases}
1/(4\mathfrak{c}_{K}) & \text{if }K<L,\\
1/(8\mathfrak{c}_{K}) & \text{if }K=L,
\end{cases}\label{e_be}
\end{equation}
where $\mathfrak{c}_{K}$ is the constant defined in \eqref{e_cK1}.
\item The constant $\kappa=\kappa(K,\,L)$ is defined as
\begin{equation}
\kappa=\mathfrak{b}+2\mathfrak{e}.\label{e_kappa}
\end{equation}
\end{itemize}
\end{defn}

We thus obtain the following theorem, which provides the main capacity
estimate.
\begin{thm}[Capacitiy estimates]
\label{t_Cap} The following estimates hold for the relevant capacities:
\begin{enumerate}
\item $\mathrm{Cap}_{\beta}(\mathbf{-1},\,\{\mathbf{0},\,\mathbf{+1}\})=\mathrm{Cap}_{\beta}(\mathbf{+1},\,\{\mathbf{-1},\,\mathbf{0}\})\simeq\frac{1}{3\kappa}e^{-\beta\Gamma}$.
\item $\mathrm{Cap}_{\beta}(\mathbf{-1},\,\mathbf{0})=\mathrm{Cap}_{\beta}(\mathbf{+1},\,\mathbf{0})\simeq\frac{1}{3\kappa}e^{-\beta\Gamma}$.
\item $\mathrm{Cap}_{\beta}(\mathbf{0},\,\{\mathbf{-1},\,\mathbf{+1}\})\simeq\frac{2}{3\kappa}e^{-\beta\Gamma}$.
\item $\mathrm{Cap}_{\beta}(\mathbf{-1},\,\mathbf{+1})\simeq\frac{1}{6\kappa}e^{-\beta\Gamma}$.
\end{enumerate}
\end{thm}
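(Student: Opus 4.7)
The plan is to prove each of the four asymptotic capacity estimates by combining the Dirichlet principle (for the upper bounds) and the Thomson principle (for the lower bounds), with test functions and test flows carefully tailored to the saddle plateau separating the relevant ground states.

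The first step is to identify the bottleneck. By Theorem \ref{t_Ebarrier}, every path between two distinct ground states must cross configurations of energy at least $\Gamma$, and the discussion preceding Definition \ref{d_typ}, together with the promised analysis of Section \ref{sec6}, shows that these saddles form a massive flat plateau at height exactly $\Gamma$, decomposing into \emph{bulk} and \emph{edge} typical configurations. Transitions that leave this plateau cost extra energy at least $1$, and therefore contribute only $o(e^{-\beta\Gamma})$ to any Dirichlet form; as a consequence, each capacity is asymptotically equal to the effective conductance through the plateau, viewed as a weighted graph in which each admissible transition has uniform conductance of order $e^{-\beta\Gamma}$.

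For the upper bounds I would use $\mathrm{Cap}_{\beta}(\mathcal{A},\,\mathcal{B}) \le D_{\beta}(f)$ with a test function $f$ equal to $1$ on $\mathcal{A}$, $0$ on $\mathcal{B}$, and interpolating harmonically along the plateau. These are built from the fundamental building blocks of Section \ref{sec7} and assembled per capacity in Section \ref{sec8}. Because every interior transition on the plateau satisfies $H(\sigma) = H(\zeta) = \Gamma$, the Dirichlet form reduces, up to the overall prefactor $\tfrac{1}{3}e^{-\beta\Gamma}$ coming from $\mu_\beta(\mathbf{s}) \to 1/3$ on $\mathcal{S}$ (cf.\ \eqref{e_Zmu}), to the effective resistance of a much smaller reduced graph. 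The bulk portion of this reduced graph is a one-dimensional chain along which canonical transitions (cf.\ Definition \ref{d_can}) propagate column-by-column, yielding the contribution $\mathfrak{b}$ of \eqref{e_be}; the edge portion has a more intricate structure near each monochromatic endpoint and yields the contribution $\mathfrak{e}$ once the symmetric recurrence of Remark \ref{r_bdry2} is solved. For the matching lower bounds I would construct a divergence-free unit flow supported on the typical plateau, whose values mimic the gradient of the harmonic test function; Thomson's principle then delivers a bound of the same leading order, and the two bounds match precisely to $\simeq$ order.

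With the single-plateau estimate in hand, parts (1)--(4) follow from elementary series/parallel electrical-network rules combined with $\mu_\beta(\mathbf{s}) \to 1/3$. Part (2) is exactly one plateau between $\mathbf{-1}$ and $\mathbf{0}$ of effective conductance $\tfrac{1}{3\kappa}e^{-\beta\Gamma}$; part (1) coincides asymptotically with (2) because the second plateau, to $\mathbf{+1}$, is inaccessible from $\mathbf{-1}$ without first visiting $\mathbf{0}$; part (3) gains a factor $2$ from two plateaus emanating from $\mathbf{0}$ in parallel; and part (4) loses a factor $2$ because the process from $\mathbf{-1}$ to $\mathbf{+1}$ must cross two plateaus in series via $\mathbf{0}$. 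The main obstacle I anticipate is the exact analysis of the edge test objects: unlike the bulk, whose reduced graph is a plain chain, the edge typical configurations have a genuinely two-dimensional branching structure, and extracting the sharp constant $\mathfrak{e}$ requires solving coupled symmetric recurrences. A secondary technical challenge is gluing the bulk and edge pieces so that the test function remains harmonic across the interface and the test flow remains exactly divergence-free, which is precisely what forces the upper and lower bounds to agree to leading order rather than merely up to multiplicative constants.
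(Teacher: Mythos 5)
Your proposal matches the paper's approach: the capacities are bounded via the Dirichlet principle (upper) and the generalized Thomson principle (lower), using the fundamental test function/flow pairs $g^{\pm1,0}$, $\phi^{\pm1,0}$ whose Dirichlet forms split into the bulk contribution $\mathfrak{b}$ (a one-dimensional chain of canonical transitions) and the edge contribution $\mathfrak{e}$ (governed by the auxiliary-process recurrences), exactly as you describe. The series/parallel reasoning you give for parts (1)--(4) is precisely what the paper realizes by taking the linear combinations $g = 1-g^{-1,0}-g^{+1,0}$ and $g = \tfrac12(1+g^{-1,0}-g^{+1,0})$, using the edge-disjointness of the two fundamental test objects (Remark~\ref{r_tfcntfl2}) and Theorem~\ref{t_eqp} to pin down the equilibrium-potential values at $\mathbf{0}$ in the Thomson bound.
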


We explain the strategy for proving this theorem in Section \ref{sec3.1}.
At this point, we prove Theorems \ref{t_EK} and \ref{t_MC}, assuming
that Theorems \ref{t_eqp} and \ref{t_Cap} hold.
\begin{proof}[Proof of Theorem \ref{t_EK}]
 According to Definition \ref{d_const} and Lemma \ref{l_auxproc},
$\kappa$ satisfies the condition \eqref{e_EKkappa} stated in Theorem
\ref{t_EK}. Thus, it suffices to prove the formulas in parts (1)
to (4) of Theorem \ref{t_EK}.

We first prove part (1) of Theorem \ref{t_EK}. For $\mathbf{s}\in\mathcal{S}$,
according to \cite[Proposition 6.10]{B-L TM}, the following formula
holds for the mean transition time:
\[
\mathbb{E}_{\mathbf{s}}^{\beta}[\tau_{\mathcal{S}\setminus\{\mathbf{s}\}}]=\frac{1}{\mathrm{Cap}_{\beta}(\mathbf{s},\,\mathcal{S}\setminus\{\mathbf{s}\})}\sum_{\sigma\in\mathcal{X}}\mu_{\beta}(\sigma)h_{\mathbf{s},\mathcal{S}\setminus\{\mathbf{s}\}}^{\beta}(\sigma).
\]
By \eqref{e_Zmu} and \eqref{e_eqpprop}, we have
\[
\mathbb{E}_{\mathbf{s}}^{\beta}[\tau_{\mathcal{S}\setminus\{\mathbf{s}\}}]=\frac{1+o(1)}{3\mathrm{Cap}_{\beta}(\mathbf{s},\,\mathcal{S}\setminus\{\mathbf{s}\})}.
\]
Hence, we obtain the desired estimates from parts (1) and (3) of Theorem
\ref{t_Cap}.

For part (2), by symmetry (cf. Remark \ref{r_modelsym}), it suffices
to prove that $\mathbb{E}_{\mathbf{-1}}^{\beta}[\tau_{\mathbf{0}}]\simeq\kappa e^{\beta\Gamma}$.
Again, from \cite[Proposition 6.10]{B-L TM}, we have
\[
\mathbb{E}_{\mathbf{-1}}^{\beta}[\tau_{\mathbf{0}}]=\frac{1}{\mathrm{Cap}_{\beta}(\mathbf{-1},\,\mathbf{0})}\sum_{\sigma\in\mathcal{X}}\mu_{\beta}(\sigma)h_{\mathbf{-1},\mathbf{0}}^{\beta}(\sigma).
\]
By \eqref{e_Zmu}, \eqref{e_eqpprop}, and part (1) of Theorem \ref{t_eqp},
we have
\[
\mathbb{E}_{\mathbf{-1}}^{\beta}[\tau_{\mathbf{0}}]=\frac{1+o(1)}{3\mathrm{Cap}_{\beta}(\mathbf{-1},\,\mathbf{0})}.
\]
Hence, part (2) of Theorem \ref{t_Cap} concludes the proof of this
case.

For parts (3) and (4), similar deductions using Theorem \ref{t_eqp}
yield
\[
\mathbb{E}_{\mathbf{0}}^{\beta}[\tau_{\mathbf{-1}}]=\frac{2+o(1)}{3\mathrm{Cap}_{\beta}(\mathbf{0},\,\mathbf{-1})}\;\;\;\;\text{and}\;\;\;\;\mathbb{E}_{\mathbf{-1}}^{\beta}[\tau_{\mathbf{+1}}]=\frac{1+o(1)}{2\mathrm{Cap}_{\beta}(\mathbf{-1},\,\mathbf{+1})}.
\]
Therefore, we conclude the proof again by means of parts (2) and (4)
of Theorem \ref{t_Cap}.
\end{proof}
\begin{proof}[Proof of Theorem \ref{t_MC}]
We first consider part (1) of Theorem \ref{t_MC}. We denote by $r_{\beta}:\mathcal{S}\times\mathcal{S}\rightarrow[0,\,\infty)$
the transition rate of the trace process $X_{\beta}(\cdot)$ (cf.
\eqref{e_trace}). Subsequently, according to \cite[Theorem 2.7]{B-L TM},
it suffices to prove that $r_{\beta}$ converges to the limiting transition
rate $r_{X}$ in \eqref{e_LMC}. Thus, we claim that
\[
r_{\beta}(\mathbf{s},\,\mathbf{s}')=\begin{cases}
(1+o(1))/\kappa & \text{if }\{\mathbf{s},\,\mathbf{s}'\}=\{\mathbf{-1},\,\mathbf{0}\}\text{ or }\{\mathbf{0},\,\mathbf{+1}\},\\
o(1) & \text{otherwise}.
\end{cases}
\]
To this end, we recall the following result from \cite[Lemma 6.8]{B-L TM}:
\[
\mu_{\beta}(\mathbf{s})r_{\beta}(\mathbf{s},\,\mathbf{s}')=\frac{1}{2}\big[\mathrm{Cap}_{\beta}(\mathbf{s},\,\mathcal{S}\setminus\{\mathbf{s}\})+\mathrm{Cap}_{\beta}(\mathbf{s}',\,\mathcal{S}\setminus\{\mathbf{s}'\})-\mathrm{Cap}_{\beta}(\{\mathbf{s},\,\mathbf{s}'\},\,\mathcal{S}\setminus\{\mathbf{s},\,\mathbf{s}'\})\big].
\]
Hence, parts (1) and (3) of Theorem \ref{t_Cap} together with \eqref{e_Zmu}
conclude the proof of part (1) of Theorem \ref{t_MC}.

We consider part (2) of Theorem \ref{t_MC}. We denote by $\mathbb{P}_{\mu_{\beta}}^{\beta}$
the law of the Metropolis dynamics $\sigma_{\beta}(\cdot)$ for which
the initial distribution is $\mu_{\beta}$. Thus, for any $u>0$,
\begin{equation}
\mathbb{P}_{\mathbf{s}}^{\beta}[\sigma_{\beta}(u)\notin\mathcal{S}]\le\frac{1}{\mu_{\beta}(\mathbf{s})}\mathbb{P}_{\mu_{\beta}}^{\beta}[\sigma_{\beta}(u)\notin\mathcal{S}]=\frac{\mu_{\beta}(\mathcal{X}\setminus\mathcal{S})}{\mu_{\beta}(\mathbf{s})},\label{e_MC}
\end{equation}
where the equality holds because $\mu_{\beta}$ is the invariant distribution.
Therefore, by the Fubini theorem, we deduce
\[
\mathbb{E}_{\mathbf{s}}^{\beta}\Big[\int_{0}^{t}\mathbf{1}\{\widehat{\sigma}_{\beta}(u)\notin\mathcal{S}\}du\Big]=\int_{0}^{t}\mathbb{P}_{\mathbf{s}}^{\beta}[\sigma_{\beta}(e^{\beta\Gamma}u)\notin\mathcal{S}]du\le t\cdot\frac{\mu_{\beta}(\mathcal{X}\setminus\mathcal{S})}{\mu_{\beta}(\mathbf{s})},
\]
which vanishes as $\beta\rightarrow\infty$ by \eqref{e_Zmu}.
\end{proof}

\subsection{\label{sec3.1}Capacity estimates}

In this subsection, we describe the strategy for proving Theorem \ref{t_Cap}.
More specifically, we review two variational principles that provide
the upper and lower bounds for the capacities, and explain how to
adapt these principles to our model.

\subsubsection*{Upper bound via Dirichlet principle}

For two disjoint and non-empty subsets $\mathcal{A}$ and $\mathcal{B}$
of $\mathcal{X}$, we denote by $\mathfrak{C}(\mathcal{A},\,\mathcal{B})$
the class of functions $f:\mathcal{X}\rightarrow\mathbb{R}$ with
$f\equiv1$ on $\mathcal{A}$ and $f\equiv0$ on $\mathcal{B}$. Then,
the \textit{Dirichlet principle} provides sharp upper bounds for the
capacities.
\begin{thm}[Dirichlet principle]
\label{t_DP} Let $\mathcal{A}$ and $\mathcal{B}$ be two disjoint
and non-empty subsets of $\mathcal{X}$. Then, we have 
\[
\mathrm{Cap}_{\beta}(\mathcal{A},\,\mathcal{B})=\inf_{f\in\mathfrak{C}(\mathcal{A},\mathcal{B})}D_{\beta}(f).
\]
The unique optimizer is the equilibrium potential $h_{\mathcal{A},\mathcal{B}}^{\beta}$
between $\mathcal{A}$ and $\mathcal{B}$ (cf. \eqref{e_eqpdef}).
\end{thm}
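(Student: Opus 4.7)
This is a classical variational identity for reversible continuous-time Markov chains, and the strategy I would use is the standard orthogonal-decomposition argument via harmonicity of the equilibrium potential.

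The plan is to show that $h := h_{\mathcal{A},\mathcal{B}}^{\beta}$ is the unique minimizer by proving that $D_\beta(f) = D_\beta(h) + D_\beta(f-h)$ for every $f \in \mathfrak{C}(\mathcal{A},\mathcal{B})$, from which the two claims of the theorem follow immediately. First I would establish that $h$ solves the discrete Dirichlet problem, namely $h \equiv 1$ on $\mathcal{A}$, $h \equiv 0$ on $\mathcal{B}$, and $(\mathcal{L}_\beta h)(\sigma) = 0$ for every $\sigma \in \mathcal{X} \setminus (\mathcal{A} \cup \mathcal{B})$, where $\mathcal{L}_\beta$ is the generator associated with the rates $c_\beta(\sigma,\zeta)$. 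This is standard: by the strong Markov property applied at the first jump time, $h(\sigma) = \mathbb{P}_\sigma^\beta[\tau_{\mathcal{A}} < \tau_{\mathcal{B}}]$ satisfies the mean-value property on the complement of $\mathcal{A} \cup \mathcal{B}$.

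Next I would write any $f \in \mathfrak{C}(\mathcal{A},\mathcal{B})$ as $f = h + g$, where $g := f - h$ vanishes on $\mathcal{A} \cup \mathcal{B}$. Expanding the symmetric bilinear form
\begin{equation*}
\mathcal{E}_\beta(u,v) = \tfrac{1}{2}\sum_{\sigma,\zeta\in\mathcal{X}}\mu_\beta(\sigma)c_\beta(\sigma,\zeta)[u(\zeta)-u(\sigma)][v(\zeta)-v(\sigma)]
\end{equation*}
yields $D_\beta(f) = D_\beta(h) + 2\mathcal{E}_\beta(h,g) + D_\beta(g)$. The reversibility \eqref{e_detbal} allows the integration-by-parts identity $\mathcal{E}_\beta(h,g) = -\sum_\sigma \mu_\beta(\sigma)\,g(\sigma)\,(\mathcal{L}_\beta h)(\sigma)$. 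The right-hand side vanishes term by term: on $\mathcal{A} \cup \mathcal{B}$ we have $g \equiv 0$, and on the complement $\mathcal{L}_\beta h \equiv 0$ by the harmonicity established above. Hence the cross term is zero and the orthogonal decomposition $D_\beta(f) = D_\beta(h) + D_\beta(g)$ holds.

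Since $D_\beta(g) \geq 0$, this immediately gives $D_\beta(f) \geq D_\beta(h) = \mathrm{Cap}_\beta(\mathcal{A},\mathcal{B})$, with the last equality by the definition \eqref{e_Capdef}. Taking the infimum over $\mathfrak{C}(\mathcal{A},\mathcal{B})$ and noting that $h \in \mathfrak{C}(\mathcal{A},\mathcal{B})$ by \eqref{e_eqpprop} proves the variational formula. For uniqueness, equality $D_\beta(f) = D_\beta(h)$ forces $D_\beta(g) = 0$, which by positivity of every rate $c_\beta(\sigma,\zeta) > 0$ along edges $\sigma \sim \zeta$ forces $g$ to be constant on each connected component of the graph $(\mathcal{X}, E(\mathcal{X}))$; irreducibility of $\sigma_\beta(\cdot)$ together with $g \equiv 0$ on the non-empty set $\mathcal{A}$ yields $g \equiv 0$ everywhere, i.e., $f \equiv h$.

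There is no real obstacle here: the only point requiring any care is the integration-by-parts step, where one must verify that the boundary contributions (from $\sigma \in \mathcal{A} \cup \mathcal{B}$) and the interior contributions (from harmonicity) both vanish simultaneously, which is exactly why $\mathfrak{C}(\mathcal{A},\mathcal{B})$ is defined by Dirichlet boundary conditions on $\mathcal{A} \cup \mathcal{B}$. The argument is entirely standard and depends on nothing specific to the Blume--Capel model beyond reversibility \eqref{e_detbal} and irreducibility of the dynamics.
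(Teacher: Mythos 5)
Your proof is correct and complete. The paper itself does not give a proof of Theorem \ref{t_DP}; it simply cites \cite[Theorem 2.7]{G-L}, which establishes a more general Dirichlet principle valid also for non-reversible chains (via a sup--inf variational formula over flows). Your argument is the classical reversible version: write $f = h + g$ with $g$ vanishing on $\mathcal{A}\cup\mathcal{B}$, expand the Dirichlet form, and kill the cross term by combining harmonicity of $h$ off $\mathcal{A}\cup\mathcal{B}$ with the Dirichlet boundary condition on $g$, where the integration-by-parts identity $\mathcal{E}_\beta(h,g) = -\sum_\sigma \mu_\beta(\sigma)g(\sigma)(\mathcal{L}_\beta h)(\sigma)$ uses reversibility \eqref{e_detbal} in an essential way. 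This buys you a short, self-contained proof tailored to the reversible setting at hand, at the cost of not generalizing to the non-reversible case that \cite{G-L} handles. Your uniqueness step is also fine: $D_\beta(g)=0$ plus $c_\beta(\sigma,\zeta)>0$ on every edge $\sigma\sim\zeta$ forces $g$ constant on connected components, and irreducibility together with $g\equiv 0$ on the non-empty set $\mathcal{A}$ gives $g\equiv 0$. No gaps.
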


We refer to \cite[Theorem 2.7]{G-L}, in which the authors provide
proofs of the generalized version (for non-reversible systems).

Recall the definition \eqref{e_Capdef} of capacities. It is technically
impossible to obtain the exact values of the equilibrium potential
$h_{\mathcal{A},\mathcal{B}}^{\beta}$ to calculate the capacity.
Therefore, we typically construct a test function $h_{\mathrm{test}}\in\mathfrak{C}(\mathcal{A},\,\mathcal{B})$
which successfully approximates the equilibrium potential $h_{\mathcal{A},\mathcal{B}}^{\beta}$,
in the sense that $D_{\beta}(h_{\mathrm{test}})$ and $D_{\beta}(h_{\mathcal{A},\mathcal{B}}^{\beta})$
are close to one another. The Dirichlet principle asserts that we
indeed obtain the upper bound $\mathrm{Cap}_{\beta}(\mathcal{A},\,\mathcal{B})\le D_{\beta}(h_{\mathrm{test}})$.

\subsubsection*{Lower bound via generalized Thomson principle}

The opposite lower bound for the capacities are deduced from the (generalized)
Thomson principle. For this formulation, we first recall the flow
structure associated with the dynamics.
\begin{defn}
\label{d_flow}We define the flow structure associated with our Metropolis
dynamics.
\begin{enumerate}
\item A function $\phi:\mathcal{X}\times\mathcal{X}\rightarrow\mathbb{R}$
is called a\textit{ flow} on $\mathcal{X}$, if $\phi$ is \textit{compatible}
with $c_{\beta}(\cdot,\,\cdot)$, in the sense that
\begin{equation}
\phi(\sigma,\,\zeta)>0\text{ only if }c_{\beta}(\sigma,\,\zeta)>0,\label{e_flow1}
\end{equation}
and \textit{anti-symmetric}, in the sense that 
\begin{equation}
\phi(\sigma,\,\zeta)=-\phi(\zeta,\,\sigma)\text{ for all }\sigma\in\mathcal{X},\;\zeta\in\mathcal{X}.\label{e_flow2}
\end{equation}
We denote by $\mathfrak{F}=\mathfrak{F}_{\mathcal{X}}$ the collection
of flows on $\mathcal{X}$.
\item For each $\beta>0$, we assign an inner product $\langle\cdot,\,\cdot\rangle_{\beta}$
to $\mathfrak{F}$ as follows:
\begin{equation}
\langle\phi,\,\psi\rangle_{\beta}=\frac{1}{2}\sum_{\sigma,\zeta\in\mathcal{X}:\,\sigma\sim\zeta}\frac{\phi(\sigma,\,\zeta)\psi(\sigma,\,\zeta)}{\mu_{\beta}(\sigma)c_{\beta}(\sigma,\,\zeta)}\text{ for all }\phi\in\mathfrak{F},\;\psi\in\mathfrak{F},\label{e_inpr}
\end{equation}
where the summand is well defined by \eqref{e_flow1}. Consequently,
this induces the \textit{flow norm} $\|\cdot\|_{\beta}$ on $\mathfrak{F}$
by $\|\phi\|_{\beta}=\sqrt{\langle\phi,\,\phi\rangle_{\beta}}$ for
$\phi\in\mathfrak{F}$.
\item Given a flow $\phi\in\mathfrak{F}$, the \textit{divergence} of $\phi$
at $\sigma\in\mathcal{X}$ is defined as
\[
(\mathrm{div}\,\phi)(\sigma)=\sum_{\zeta\in\mathcal{X}}\phi(\sigma,\,\zeta)=\sum_{\zeta\in\mathcal{X}:\,\sigma\sim\zeta}\phi(\sigma,\,\zeta).
\]
\item For a function $f:\mathcal{X}\rightarrow\mathbb{R}$, we define $\Psi_{f}:\mathcal{X}\times\mathcal{X}\rightarrow\mathbb{R}$
as follows:
\begin{equation}
\Psi_{f}(\sigma,\,\zeta)=\mu_{\beta}(\sigma)c_{\beta}(\sigma,\,\zeta)[f(\sigma)-f(\zeta)]\;\;\;\;;\;\sigma\in\mathcal{X},\;\zeta\in\mathcal{X}.\label{e_fcnflow}
\end{equation}
Then, it is clear that $\Psi_{f}$ is a flow; that is, it satisfies
\eqref{e_flow1} and \eqref{e_flow2}. Moreover, by definition, we
obtain that
\begin{equation}
\|\Psi_{f}\|_{\beta}^{2}=\frac{1}{2}\sum_{\sigma,\zeta\in\mathcal{X}}\mu_{\beta}(\sigma)c_{\beta}(\sigma,\,\zeta)[f(\sigma)-f(\zeta)]^{2}=D_{\beta}(f).\label{e_fcnflow2}
\end{equation}
\end{enumerate}
\end{defn}

We state the \textit{(generalized) Thomson principle} (which was introduced
in \cite{Seo NRZRP}) for reversible Markov chains. We refer to \cite[Theorem 5.3]{Seo NRZRP}
for its proof.
\begin{thm}[Generalized Thomson principle]
\label{t_gTP} Let $\mathcal{A}$ and $\mathcal{B}$ be two disjoint
and non-empty subsets of $\mathcal{X}$. Then, we have
\begin{equation}
\mathrm{Cap}_{\beta}(\mathcal{A},\,\mathcal{B})=\sup_{\psi\ne0}\frac{1}{\|\psi\|_{\beta}^{2}}\Big[\sum_{\sigma\in\mathcal{X}}h_{\mathcal{A},\mathcal{B}}^{\beta}(\sigma)(\mathrm{div}\,\psi)(\sigma)\Big]^{2},\label{e_gTP}
\end{equation}
where $0$ is the zero flow. The optimizers are given by $c\Psi_{h_{\mathcal{A},\mathcal{B}}^{\beta}}$
for $c\neq0$.
\end{thm}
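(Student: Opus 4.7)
The approach will hinge on recognizing the right-hand side of \eqref{e_gTP} as a Cauchy--Schwarz bound in the Hilbert space $(\mathfrak{F},\langle\cdot,\cdot\rangle_\beta)$. The two enabling observations are the identification $\mathrm{Cap}_\beta(\mathcal{A},\mathcal{B})=D_\beta(h_{\mathcal{A},\mathcal{B}}^\beta)=\|\Psi_{h_{\mathcal{A},\mathcal{B}}^\beta}\|_\beta^2$, which is immediate from \eqref{e_Capdef} and \eqref{e_fcnflow2}, together with a single discrete integration-by-parts identity linking functions and flows. With these in hand the whole theorem collapses to a two-line computation.

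First I would prove the key duality identity: for every $f:\mathcal{X}\to\mathbb{R}$ and every flow $\psi\in\mathfrak{F}$,
\[
\sum_{\sigma\in\mathcal{X}}f(\sigma)(\mathrm{div}\,\psi)(\sigma)=\langle\Psi_f,\psi\rangle_\beta.
\]
To derive this, I expand the left-hand side as $\sum_{\sigma,\zeta}f(\sigma)\psi(\sigma,\zeta)$, then split the double sum in half and apply the anti-symmetry condition \eqref{e_flow2} to the second half to obtain $\tfrac{1}{2}\sum_{\sigma,\zeta}[f(\sigma)-f(\zeta)]\psi(\sigma,\zeta)$; on edges where $\psi$ is non-zero one has $\sigma\sim\zeta$ by compatibility \eqref{e_flow1}, so we may multiply and divide by $\mu_\beta(\sigma)c_\beta(\sigma,\zeta)>0$ to match the definitions \eqref{e_fcnflow} and \eqref{e_inpr}. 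Applying this identity with $f=h_{\mathcal{A},\mathcal{B}}^\beta$ and then Cauchy--Schwarz in $(\mathfrak{F},\langle\cdot,\cdot\rangle_\beta)$ yields, for every $\psi\ne 0$,
\[
\frac{1}{\|\psi\|_\beta^2}\Big[\sum_{\sigma}h_{\mathcal{A},\mathcal{B}}^\beta(\sigma)(\mathrm{div}\,\psi)(\sigma)\Big]^2=\frac{\langle\Psi_{h_{\mathcal{A},\mathcal{B}}^\beta},\psi\rangle_\beta^2}{\|\psi\|_\beta^2}\le\|\Psi_{h_{\mathcal{A},\mathcal{B}}^\beta}\|_\beta^2=\mathrm{Cap}_\beta(\mathcal{A},\mathcal{B}),
\]
giving the ``$\ge$'' inequality in \eqref{e_gTP} upon taking the supremum. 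For the matching direction and the identification of optimizers, I substitute the test flow $\psi=c\Psi_{h_{\mathcal{A},\mathcal{B}}^\beta}$ for any $c\ne 0$: both the numerator and denominator then reduce to $c^2\|\Psi_{h_{\mathcal{A},\mathcal{B}}^\beta}\|_\beta^4$ and $c^2\|\Psi_{h_{\mathcal{A},\mathcal{B}}^\beta}\|_\beta^2$, respectively, producing equality. The characterization of equality in Cauchy--Schwarz then pins down all optimizers as the nonzero scalar multiples of $\Psi_{h_{\mathcal{A},\mathcal{B}}^\beta}$.

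The only mild obstacle is checking non-triviality: the supremum is attained only if $\Psi_{h_{\mathcal{A},\mathcal{B}}^\beta}\not\equiv 0$, which holds because $h_{\mathcal{A},\mathcal{B}}^\beta$ takes value $1$ on $\mathcal{A}$ and $0$ on $\mathcal{B}$ while the chain is irreducible on $\mathcal{X}$, so there must exist an edge $\sigma\sim\zeta$ across which $h_{\mathcal{A},\mathcal{B}}^\beta$ jumps. Beyond this routine verification and the bookkeeping of anti-symmetric sums in the integration-by-parts step, no analytical difficulty arises: the entire theorem is, in essence, Cauchy--Schwarz in a carefully chosen inner product.
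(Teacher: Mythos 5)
Your argument is correct. Note that the paper does not supply its own proof of Theorem \ref{t_gTP}: it defers entirely to \cite[Theorem 5.3]{Seo NRZRP}. Your write-up fills that gap with what is the standard proof in the reversible setting: the discrete integration-by-parts identity $\sum_{\sigma}f(\sigma)(\mathrm{div}\,\psi)(\sigma)=\langle\Psi_f,\psi\rangle_\beta$ (valid because anti-symmetry of $\psi$ lets you symmetrize the double sum, and compatibility \eqref{e_flow1} guarantees the Radon--Nikodym-style division by $\mu_\beta(\sigma)c_\beta(\sigma,\zeta)$ is legitimate on every contributing pair), followed by Cauchy--Schwarz in the flow inner product together with the norm identification $\|\Psi_{h_{\mathcal{A},\mathcal{B}}^\beta}\|_\beta^2=D_\beta(h_{\mathcal{A},\mathcal{B}}^\beta)=\mathrm{Cap}_\beta(\mathcal{A},\mathcal{B})$ from \eqref{e_Capdef} and \eqref{e_fcnflow2}. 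Your handling of the non-degeneracy ($\Psi_{h_{\mathcal{A},\mathcal{B}}^\beta}\neq 0$ via irreducibility and the boundary values $1$ on $\mathcal{A}$, $0$ on $\mathcal{B}$) and the characterization of optimizers via the Cauchy--Schwarz equality case are both right. This is essentially the same route as the cited reference, specialized to the reversible case where the bilinear machinery reduces to a single application of Cauchy--Schwarz, so there is nothing further to reconcile with the paper.
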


To apply Theorem \ref{t_gTP}, we use $c\Psi_{h_{\mathrm{test}}}$
where $h_{\mathrm{test}}$ is the test function used to approximate
$h_{\mathcal{A},\mathcal{B}}^{\beta}$; see Definition \ref{d_tfl}.

The remainder of this paper is organized as follows. In Section \ref{sec4},
we define several basic concepts that are crucial to understanding
the natural metastable transitions between the ground states. During
this process, we prove Theorems \ref{t_Ebarrier} and \ref{t_LDTresults}.
In Sections \ref{sec5} and \ref{sec6}, we define and investigate
the typical and gateway configurations that are the building blocks
of the overall energy landscape of our model. Such thorough investigation
results in the proof of Theorem \ref{t_eqp} in Section \ref{sec5}.
In Section \ref{sec7}, we construct the fundamental test functions
and flows, which are the components of the actual test objects, to
estimate the capacities. Thereafter, in Section \ref{sec8}, we prove
the capacity estimates in Theorem \ref{t_Cap}. Finally, in Section
\ref{sec9}, we discuss the periodic boundary case. The Appendix is
devoted to investigating the auxiliary process, which is used to handle
the edge typical configurations in Section \ref{sec6}.

\section{\label{sec4}Canonical Configurations and Energy Barrier}

The following notation is frequently used throughout the remainder
of the article.
\begin{notation}
\label{n_good}A pair $(a,\,b)$ of spins is called \textit{good},
if $\{a,\,b\}=\{-1,\,0\}$ or $\{0,\,+1\}$.
\end{notation}

Throughout the article, we use $v$ and $h$ to denote vertical and
horizontal lengths, respectively.

\subsection{\label{sec4.1}Canonical configurations and paths}

\begin{figure}
\includegraphics[width=13cm]{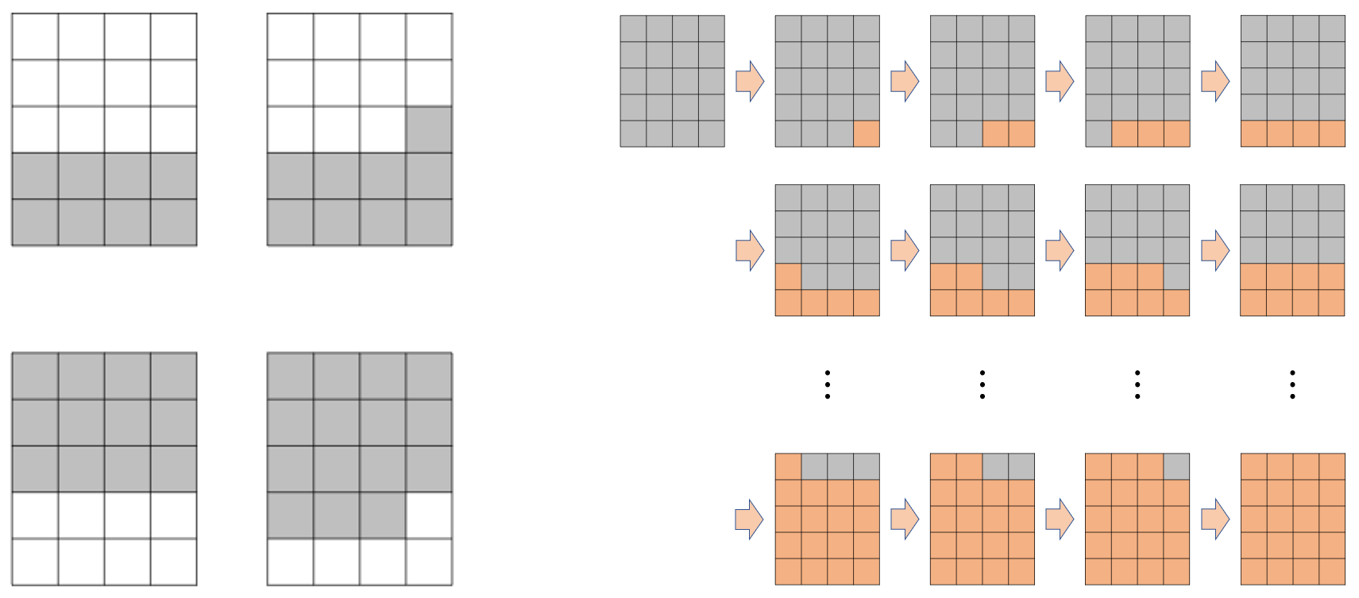}\caption{\label{fig4.1}In the figures in this article, white, gray, and orange
colors denote the spins $-1$, $0$, and $+1$, respectively. (Left)
canonical configurations for $(K,\,L)=(4,\,5)$; $\zeta_{2}^{+}$,
$\zeta_{2,1}^{+-}$ (upper-right), $\zeta_{3}^{-}$ (lower-left),
and $\zeta_{3,3}^{-+}$. (Right) a canonical path from $\mathbf{0}$
to $\mathbf{+1}$ for $(K,\,L)=(4,\,5)$.}
\end{figure}

\begin{defn}[Pre-canonical configurations and paths]
\label{d_precan} We define pre-canonical configurations between
$\mathbf{-1}$ and $\mathbf{0}$. We refer to Figure \ref{fig4.1}
(left) for an illustration.
\begin{itemize}
\item For $v\in\llbracket0,\,L\rrbracket$, we denote by $\zeta_{v}^{+}\in\mathcal{X}$
the spin configuration whose spins are $0$ on $\llbracket1,\,K\rrbracket\times\llbracket1,\,v\rrbracket$
and $-1$ on the remainder. Moreover, we denote by $\zeta_{v}^{-}\in\mathcal{X}$
the spin configuration whose spins are $0$ on $\llbracket1,\,K\rrbracket\times\llbracket L-v+1,\,L\rrbracket$
and $-1$ on the remainder. Hence, we have $\zeta_{0}^{+}=\zeta_{0}^{-}=\mathbf{-1}$
and $\zeta_{L}^{+}=\zeta_{L}^{-}=\mathbf{0}$. For $v\in\llbracket0,\,L\rrbracket$,
we write 
\begin{equation}
\mathcal{R}_{v}=\{\zeta_{v}^{+},\,\zeta_{v}^{-}\}.\label{e_Rv}
\end{equation}
\item For $v\in\llbracket0,\,L-1\rrbracket$ and $h\in\llbracket0,\,K\rrbracket$,
we denote by $\zeta_{v,h}^{++}\in\mathcal{X}$ the configuration whose
spins are $0$ on 
\[
\big[\llbracket1,\,K\rrbracket\times\llbracket1,\,v\rrbracket\big]\cup\big[\llbracket1,\,h\rrbracket\times\{v+1\}\big]
\]
and $-1$ on the remainder. Similarly, we denote by $\zeta_{v,h}^{+-}\in\mathcal{X}$
the configuration whose spins are $0$ on
\[
\big[\llbracket1,\,K\rrbracket\times\llbracket1,\,v\rrbracket\big]\cup\big[\llbracket K-h+1,\,K\rrbracket\times\{v+1\}\big]
\]
and $-1$ on the remainder. Namely, we obtain $\zeta_{v,h}^{++}$
(resp. $\zeta_{v,h}^{+-}$) from $\zeta_{v}^{+}$ by attaching a protuberance
of spin $0$ of size $h$ at its upper-left (resp. upper-right) corner
of the cluster of spin $0$. Similarly, we define $\zeta_{v,h}^{-+}$
and $\zeta_{v,h}^{--}$ by attaching a protuberance of spin $0$ of
size $h$ in $\zeta_{v}^{-}$. For $v\in\llbracket0,\,L-1\rrbracket$,
we write\textbf{
\begin{equation}
\mathcal{Q}_{v}=\bigcup_{h=1}^{K-1}\{\zeta_{v,h}^{++},\,\zeta_{v,h}^{+-},\,\zeta_{v,h}^{-+},\,\zeta_{v,h}^{--}\}.\label{e_Qv}
\end{equation}
}Concisely, $\mathcal{Q}_{v}$ consists of the configurations which
connect the ones in $\mathcal{R}_{v}$ and $\mathcal{R}_{v+1}$.
\item We define the collection $\mathcal{C}$ of\textit{ pre-canonical configurations
}as 
\[
\mathcal{C}=\bigcup_{v=0}^{L}\mathcal{R}_{v}\cup\bigcup_{v=0}^{L-1}\mathcal{Q}_{v}.
\]
\item Finally, a sequence $(\omega_{n})_{n=0}^{KL}$ of configurations is
a \textit{pre-canonical path} if it satisfies the following conditions;
see Figure \ref{fig4.1} (right).
\begin{itemize}
\item $\omega_{Kv}=\zeta_{v}^{+}$ for all $v\in\llbracket0,\,L\rrbracket$
\textit{(Type 1)} or $\omega_{Kv}=\zeta_{v}^{-}$ for all $v\in\llbracket0,\,L\rrbracket$
\textit{(Type 2)}.
\item \textit{(Type 1)} For each $v\in\llbracket0,\,L-1\rrbracket$, $\omega_{Kv+h}=\zeta_{v,h}^{++}$
for all $h\in\llbracket0,\,K\rrbracket$ or $\omega_{Kv+h}=\zeta_{v,h}^{+-}$
for all $h\in\llbracket0,\,K\rrbracket$.
\item \textit{(Type 2)} For each $v\in\llbracket0,\,L-1\rrbracket$, $\omega_{Kv+h}=\zeta_{v,h}^{-+}$
for all $h\in\llbracket0,\,K\rrbracket$ or $\omega_{Kv+h}=\zeta_{v,h}^{--}$
for all $h\in\llbracket0,\,K\rrbracket$.
\end{itemize}
\end{itemize}
We can readily verify that a pre-canonical path is indeed a path,
in the sense of Definition \ref{d_Ebarrier}. Moreover, pre-canonical
paths characterize all the possible paths from $\mathbf{-1}$ to $\mathbf{0}$
in $\mathcal{C}$ if $K<L$. However, more possible paths exist if
$K=L$; that is, the transposed pre-canonical paths.
\end{defn}

Based on this observation, we define canonical configurations and
paths between the ground states as follows:
\begin{defn}[Canonical configurations and paths]
\label{d_can} For two spins $a$ and $b$, we denote by $\mathcal{X}^{a,b}\subseteq\mathcal{X}$
the collection of configurations of which all spins are either $a$
or $b$. Then, we define the natural one-to-one correspondence $\Xi^{a,b}:\mathcal{X}^{-1,0}\rightarrow\mathcal{X}^{a,b}$
which maps spins $-1$ and $0$ to $a$ and $b$, respectively.

Now, we fix a good pair $(a,\,b)$ (cf. Notation \ref{n_good}). Then,
we divide into the cases of $K<L$ and $K=L$.
\begin{itemize}
\item \textbf{(Case $K<L$) }We define the collection $\mathcal{C}^{a,b}$
of \textit{canonical configurations} between $\mathbf{a}$ and $\mathbf{b}$
as
\[
\mathcal{C}^{a,b}=\Xi^{a,b}(\mathcal{C}).
\]
By symmetry, using $\Xi^{b,a}$ instead of $\Xi^{a,b}$ yields the
same result, so that $\mathcal{C}^{a,b}=\mathcal{C}^{b,a}$. Then,
we define (cf. \eqref{e_Rv} and \eqref{e_Qv})
\[
\mathcal{R}_{v}^{a,b}=\Xi^{a,b}(\mathcal{R}_{v})\;\;\;\;;\;v\in\llbracket0,\,L\rrbracket,\;\;\;\;\mathcal{Q}_{v}^{a,b}=\Xi^{a,b}(\mathcal{Q}_{v})\;\;\;\;;\;v\in\llbracket0,\,L-1\rrbracket.
\]
\item \textbf{(Case $K=L$)} We define a transpose operator $\Theta:\mathcal{X}\rightarrow\mathcal{X}$
by, for $\sigma\in\mathcal{X}$, 
\[
(\Theta(\sigma))(k,\,\ell)=\sigma(\ell,\,k)\;\;\;\;;\;k\in\llbracket1,\,K\rrbracket\text{ and }\ell\in\llbracket1,\,L\rrbracket.
\]
Then, we define the collection $\mathcal{C}^{a,b}$ of \textit{canonical
configurations} between $\mathbf{a}$ and $\mathbf{b}$ as
\[
\mathcal{C}^{a,b}=\Xi^{a,b}(\mathcal{C})\cup(\Theta\circ\Xi^{a,b})(\mathcal{C}).
\]
 We enlarge the collection of canonical configurations in this case,
because the transposed configurations also have the same energy due
to the condition $K=L$. Again, we have $\mathcal{C}^{a,b}=\mathcal{C}^{b,a}$.
Moreover, we define 
\begin{align*}
\mathcal{R}_{v}^{a,b} & =\Xi^{a,b}(\mathcal{R}_{v})\cup(\Theta\circ\Xi^{a,b})(\mathcal{R}_{v})\;\;\;\;;\;v\in\llbracket0,\,L\rrbracket,\\
\mathcal{Q}_{v}^{a,b} & =\Xi^{a,b}(\mathcal{Q}_{v})\cup(\Theta\circ\Xi^{a,b})(\mathcal{Q}_{v})\;\;\;\;;\;v\in\llbracket0,\,L-1\rrbracket.
\end{align*}
\end{itemize}
A sequence $(\omega_{n})_{n=0}^{KL}$ of configurations is a \textit{canonical
path} from $\mathbf{a}$ to $\mathbf{b}$ if there exists a pre-canonical
path $(\widetilde{\omega}_{n})_{n=0}^{KL}$ such that $\omega_{n}=\Xi^{a,b}(\widetilde{\omega}_{n})$
for all $n\in\llbracket0,\,KL\rrbracket$ (or additionally $\omega_{n}=(\Theta\circ\Xi^{a,b})(\widetilde{\omega}_{n})$
for all $n\in\llbracket0,\,KL\rrbracket$ if $K=L$).
\end{defn}

\begin{rem}
\label{r_H}It holds that $H(\sigma)\le\Gamma$ for all $\sigma\in\mathcal{C}^{-1,0}\cup\mathcal{C}^{0,+1}$
and 
\[
H(\sigma)=\begin{cases}
\Gamma-1 & \text{if }\sigma\in\mathcal{R}_{v}^{-1,0}\cup\mathcal{R}_{v}^{0,+1}\text{ for }v\in\llbracket1,\,L-1\rrbracket,\\
\Gamma & \text{if }\sigma\in\mathcal{Q}_{v}^{-1,0}\cup\mathcal{Q}_{v}^{0,+1}\text{ for }v\in\llbracket1,\,L-2\rrbracket.
\end{cases}
\]
These facts imply that canonical paths are $\Gamma$-paths.
\end{rem}

\begin{rem}
\label{r_can-1+1}One may be tempted to define similar objects between
$\mathbf{-1}$ and $\mathbf{+1}$ by choosing $(a,\,b)=(-1,\,+1)$
or $(+1,\,-1)$. However, the resulting configurations have too high
energy to be considered in our investigation. To explain this, recall
$\Xi^{-1,+1}:\mathcal{X}^{-1,0}\rightarrow\mathcal{X}^{-1,+1}$ from
Definition \ref{d_can}. Then, we can deduce that
\[
H(\sigma)=\begin{cases}
4\Gamma-4 & \text{if }\sigma\in\Xi^{-1,+1}(\mathcal{R}_{v})\text{ for }v\in\llbracket1,\,L-1\rrbracket,\\
4\Gamma & \text{if }\sigma\in\Xi^{-1,+1}(\mathcal{Q}_{v})\text{ for }v\in\llbracket1,\,L-2\rrbracket,
\end{cases}
\]
where $4\Gamma-4>\Gamma$. Hence, we cannot connect $\mathbf{-1}$
and $\mathbf{+1}$ by a direct canonical $\Gamma$-path, and thus
it is natural to expect that \textit{$\Gamma$-paths between $\mathbf{-1}$
and $\mathbf{+1}$ must visit at least a certain neighborhood of $\mathbf{0}$.}
Rigorously, this is exactly part (1) of Theorem \ref{t_eqp}.
\end{rem}

\subsection{\label{sec4.2}Proof of Theorem \ref{t_Ebarrier}}

Based on the canonical configurations, we are now ready to prove that
the energy barrier of the dynamics is exactly $\Gamma$.
\begin{proof}[Proof of Theorem \ref{t_Ebarrier}]
 First, we claim that for two spins $a$ and $b$,
\begin{equation}
\Gamma_{a,b}=\Phi(\mathbf{a},\,\mathbf{b})\le\Gamma.\label{e_EbarrierUB}
\end{equation}
Indeed, the canonical paths between $\mathbf{-1}$ and $\mathbf{0}$
assert that $\Gamma_{-1,0}=\Phi(\mathbf{-1},\,\mathbf{0})\le\Gamma$.
Similarly, the canonical paths between $\mathbf{0}$ and $\mathbf{+1}$
imply $\Gamma_{0,+1}\le\Gamma$. Hence,
\[
\Gamma_{-1,+1}=\Phi(\mathbf{-1},\,\mathbf{+1})\le\max\{\Phi(\mathbf{-1},\,\mathbf{0}),\,\Phi(\mathbf{0},\,\mathbf{+1})\}\le\Gamma.
\]
Thus, we get \eqref{e_EbarrierUB}. Therefore, to conclude the proof
of Theorem \ref{t_Ebarrier}, it suffices to prove that for distinct
spins $a$ and $b$,
\begin{equation}
\Gamma_{a,b}=\Phi(\mathbf{a},\,\mathbf{b})\ge\Gamma.\label{e_EbarrierLB}
\end{equation}
To provide a simple proof of \eqref{e_EbarrierLB}, we recall the
Metropolis dynamics of the 2D Potts model for $q=3$ with zero external
field \cite{Kim-Seo Ising-Potts,N-Z}. In this model, everything is
defined in the same way as in Section \ref{sec2.1}, except that the
Hamiltonian is given by
\begin{equation}
H_{\mathrm{Potts}}(\sigma)=\sum_{x\sim y}\mathbf{1}\{\sigma(x)\ne\sigma(y)\}\;\;\;\;;\;\sigma\in\mathcal{X}.\label{e_HPotts}
\end{equation}
Comparing this to our Hamiltonian \eqref{e_H}, we can easily notice
that 
\begin{equation}
H(\sigma)\ge H_{\mathrm{Potts}}(\sigma)\;\;\;\;;\;\sigma\in\mathcal{X}.\label{e_Hineq}
\end{equation}
Moreover, it is proved in \cite[Theorem 2.1]{N-Z} that the energy
barrier $\Phi_{\mathrm{Potts}}(\mathbf{s},\,\mathbf{s}')$, $\mathbf{s},\,\mathbf{s}'\in\mathcal{S}$,
of the Potts dynamics is exactly $\Gamma$. Therefore, as the energy
landscapes of the two models are identical, we deduce from \eqref{e_Hineq}
that
\[
\Phi(\mathbf{s},\,\mathbf{s}')\ge\Phi_{\mathrm{Potts}}(\mathbf{s},\,\mathbf{s}')=\Gamma\;\;\;\;;\;\mathbf{s},\,\mathbf{s}'\in\mathcal{S}.
\]
This is exactly \eqref{e_EbarrierLB}, and thus we conclude the proof
of Theorem \ref{t_Ebarrier}.
\end{proof}

\subsection{\label{sec4.3}Neighborhoods and configurations with small energy}

First, we review the concept of neighborhoods defined in \cite[Section 5]{Kim-Seo Ising-Potts}.
\begin{defn}[Neighborhoods]
\label{d_nbd} We define two types of neighborhoods of configurations
as in \cite[Definition 5.1]{Kim-Seo Ising-Potts}.
\begin{enumerate}
\item For $\sigma\in\mathcal{X}$, the neighborhoods $\mathcal{N}(\sigma)$
and $\widehat{\mathcal{N}}(\sigma)$ are defined as (cf. Definition
\ref{d_Ebarrier})
\begin{align*}
\mathcal{N}(\sigma) & =\{\zeta\in\mathcal{X}:\exists\text{a }(\Gamma-1)\text{-path }(\omega_{n})_{n=0}^{N}\text{ connecting }\sigma\text{ and }\zeta\},\\
\widehat{\mathcal{N}}(\sigma) & =\{\zeta\in\mathcal{X}:\exists\text{a }\Gamma\text{-path }(\omega_{n})_{n=0}^{N}\text{ connecting }\sigma\text{ and }\zeta\}.
\end{align*}
Then, for $\mathcal{A}\subseteq\mathcal{X}$, we define 
\[
\mathcal{N}(\mathcal{A})=\bigcup_{\sigma\in\mathcal{A}}\mathcal{N}(\sigma)\;\;\;\;\text{and\;\;\;\;}\widehat{\mathcal{N}}(\mathcal{A})=\bigcup_{\sigma\in\mathcal{A}}\mathcal{\widehat{\mathcal{N}}}(\sigma).
\]
\item Let $\mathcal{B}\subseteq\mathcal{X}$. For $\sigma\in\mathcal{X}$
with $\sigma\notin\mathcal{B}$, we define
\[
\widehat{\mathcal{N}}(\sigma;\mathcal{B})=\{\zeta\in\mathcal{X}:\exists\text{a }\Gamma\text{-path in }\mathcal{X}\setminus\mathcal{B}\text{ connecting }\sigma\text{ and }\zeta\}.
\]
Then, for $\mathcal{A}\subseteq\mathcal{X}$ disjoint with $\mathcal{B}$,
we define 
\[
\widehat{\mathcal{N}}(\mathcal{A};\mathcal{B})=\bigcup_{\sigma\in\mathcal{A}}\widehat{\mathcal{N}}(\sigma;\mathcal{B}).
\]
\end{enumerate}
\end{defn}

With these notions in mind, as $\beta\rightarrow\infty$, the only
configurations relevant to the study of metastability are those in
$\widehat{\mathcal{N}}(\mathcal{S})$ (in view of Theorem \ref{t_Ebarrier}).
Indeed, if we take $\sigma\in\mathcal{X}$ with $H(\sigma)>\Gamma$,
then by \eqref{e_detbal} it holds that, for any $\zeta\in\mathcal{X}$
with $\zeta\sim\sigma$,
\[
\mu_{\beta}(\sigma)c_{\beta}(\sigma,\,\zeta)=\mu_{\beta}(\zeta)c_{\beta}(\zeta,\,\sigma)\le\mu_{\beta}(\sigma)=O(e^{-\beta(\Gamma+1)}).
\]
This implies that any spin updates associated with $\sigma$ are irrelevant
to the study of metastability on the scale $e^{\beta\Gamma}$. Hence,
$\widehat{\mathcal{N}}(\mathcal{S})$ is the main object in our study
of the energy landscape.

The following lemma, which is a generalization of \cite[Lemma 5.2]{Kim-Seo Ising-Potts},
is useful to investigate the $\widehat{\mathcal{N}}$-neighborhoods.
We can prove this lemma in the same manner, and thus we omit it.
\begin{lem}
\label{l_set}Suppose that $\mathcal{A}$, $\mathcal{A}'$, and $\mathcal{B}$
are pairwise disjoint subsets of $\mathcal{X}$. Then, we have
\[
\widehat{\mathcal{N}}(\mathcal{A}\cup\mathcal{A}';\mathcal{B})=\widehat{\mathcal{N}}(\mathcal{A}';\mathcal{A}\cup\mathcal{B})\cup\widehat{\mathcal{N}}(\mathcal{A};\mathcal{A}'\cup\mathcal{B}).
\]
In particular, if $\mathcal{B}=\emptyset$, then we have $\widehat{\mathcal{N}}(\mathcal{A}\cup\mathcal{A}')=\widehat{\mathcal{N}}(\mathcal{A}';\mathcal{A})\cup\widehat{\mathcal{N}}(\mathcal{A};\mathcal{A}')$.
\end{lem}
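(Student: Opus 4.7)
The plan is to prove both inclusions $\subseteq$ and $\supseteq$ directly from the definitions, exploiting the disjointness of $\mathcal{A}$, $\mathcal{A}'$, and $\mathcal{B}$ to truncate paths.

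The inclusion $\supseteq$ is essentially definitional. Given $\zeta\in\widehat{\mathcal{N}}(\mathcal{A}';\mathcal{A}\cup\mathcal{B})$, there is a $\Gamma$-path $(\omega_n)_{n=0}^N$ in $\mathcal{X}\setminus(\mathcal{A}\cup\mathcal{B})$ with $\omega_0\in\mathcal{A}'\subseteq\mathcal{A}\cup\mathcal{A}'$ and $\omega_N=\zeta$. Since $\mathcal{X}\setminus(\mathcal{A}\cup\mathcal{B})\subseteq\mathcal{X}\setminus\mathcal{B}$, this same path witnesses $\zeta\in\widehat{\mathcal{N}}(\mathcal{A}\cup\mathcal{A}';\mathcal{B})$. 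The symmetric argument handles $\widehat{\mathcal{N}}(\mathcal{A};\mathcal{A}'\cup\mathcal{B})$.

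For the nontrivial inclusion $\subseteq$, I would take any $\zeta\in\widehat{\mathcal{N}}(\mathcal{A}\cup\mathcal{A}';\mathcal{B})$, fix a witnessing $\Gamma$-path $(\omega_n)_{n=0}^N$ in $\mathcal{X}\setminus\mathcal{B}$ with $\omega_0\in\mathcal{A}\cup\mathcal{A}'$ and $\omega_N=\zeta$, and then \emph{re-start the path at the last visit to $\mathcal{A}\cup\mathcal{A}'$}. Concretely, set
\[
m=\max\{n\in\llbracket 0,N\rrbracket : \omega_n\in\mathcal{A}\cup\mathcal{A}'\},
\]
which is well defined since $\omega_0\in\mathcal{A}\cup\mathcal{A}'$. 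The truncated sequence $(\omega_m,\omega_{m+1},\ldots,\omega_N)$ is again a $\Gamma$-path, starts in $\mathcal{A}\cup\mathcal{A}'$, and by the maximality of $m$ all of $\omega_{m+1},\ldots,\omega_N$ lie outside $\mathcal{A}\cup\mathcal{A}'$. Combining this with the original hypothesis that every $\omega_n$ avoids $\mathcal{B}$, the truncated path lies entirely in $\mathcal{X}\setminus(\mathcal{A}\cup\mathcal{A}'\cup\mathcal{B})$ except possibly at its starting point $\omega_m$.

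To finish, I split on which piece contains $\omega_m$: if $\omega_m\in\mathcal{A}'$, then by disjointness $\omega_m\notin\mathcal{A}\cup\mathcal{B}$, so the truncated path is a $\Gamma$-path in $\mathcal{X}\setminus(\mathcal{A}\cup\mathcal{B})$ from $\mathcal{A}'$ to $\zeta$, giving $\zeta\in\widehat{\mathcal{N}}(\mathcal{A}';\mathcal{A}\cup\mathcal{B})$; if $\omega_m\in\mathcal{A}$ the symmetric argument places $\zeta\in\widehat{\mathcal{N}}(\mathcal{A};\mathcal{A}'\cup\mathcal{B})$. Taking $\mathcal{B}=\emptyset$ yields the stated specialization. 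There is no serious obstacle here: pairwise disjointness is exactly what is needed to ensure that the truncated subpath avoids the forbidden set $\mathcal{A}\cup\mathcal{B}$ (resp.\ $\mathcal{A}'\cup\mathcal{B}$) at every index, including its initial point.
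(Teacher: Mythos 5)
Your proof is correct. The paper itself omits the argument, deferring to Lemma 5.2 of the companion paper [Kim--Seo], but your ``last visit to $\mathcal{A}\cup\mathcal{A}'$'' truncation is exactly the natural argument one would expect there: the $\supseteq$ direction is monotonicity of the forbidden set, and the $\subseteq$ direction cuts a witnessing $\Gamma$-path at its last entry into $\mathcal{A}\cup\mathcal{A}'$, after which pairwise disjointness guarantees the tail avoids either $\mathcal{A}\cup\mathcal{B}$ or $\mathcal{A}'\cup\mathcal{B}$ according to which piece contains $\omega_m$.

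Two minor points worth making explicit when you write this up. First, since ``connecting'' in Definition \ref{d_Ebarrier} permits the path to run in either direction, you should say ``after reversing if necessary, assume $\omega_0\in\mathcal{A}\cup\mathcal{A}'$ and $\omega_N=\zeta$.'' Second, the degenerate case $m=N$ (i.e., $\zeta$ itself lies in $\mathcal{A}\cup\mathcal{A}'$) is handled by the length-zero path, which the definition of path allows; this is consistent with the fact that $\mathcal{A}\cup\mathcal{A}'\subseteq\widehat{\mathcal{N}}(\mathcal{A}\cup\mathcal{A}';\mathcal{B})$ trivially. Neither is a gap, just phrasing.
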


We verified in Section \ref{sec4.2} that the energy barrier is exactly
$\Gamma$. Now, we fully characterize the spin configurations with
energy less than $\Gamma$. This result is an analogue of \cite[Proposition 6.8]{Kim-Seo Ising-Potts}
and can be proved in a similar manner; thus, we omit the proof. We
refer to Figure \ref{fig4.2} for some examples of such configurations.

\begin{figure}
\includegraphics[width=13cm]{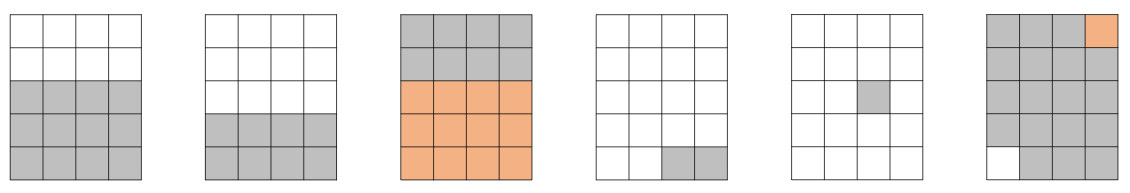}\caption{\label{fig4.2}Configurations with energy smaller than $\Gamma$:
type \textbf{(T1)} (the first three) and type \textbf{(T2)} (the last
three).}
\end{figure}

\begin{prop}
\label{p_E<Gamma}Suppose that $\sigma\in\mathcal{X}$ satisfies $H(\sigma)<\Gamma$.
Then, exactly one of \textbf{\textup{(T1)}} or \textbf{\textup{(T2)}}
below holds.
\begin{enumerate}
\item[\textbf{(T1)}]  There exist a good pair $(a,\,b)$ and $v\in\llbracket2,\,L-2\rrbracket$
such that $\sigma\in\mathcal{R}_{v}^{a,b}$. In particular, $\mathcal{N}(\sigma)$
is a singleton, i.e., $\mathcal{N}(\sigma)=\{\sigma\}$.
\item[\textbf{(T2)}]  The configuration $\sigma$ belongs to $\mathcal{N}(\mathbf{a})$
for exactly one spin $a$, so that $\mathcal{N}(\sigma)=\mathcal{N}(\mathbf{a})$.
\end{enumerate}
\end{prop}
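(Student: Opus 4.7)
The plan is to bootstrap from the analogous classification for the $3$-state Potts model, exploiting the inequality $H\ge H_{\mathrm{Potts}}$ from \eqref{e_Hineq}. Any $\sigma$ with $H(\sigma)<\Gamma$ automatically satisfies $H_{\mathrm{Potts}}(\sigma)<\Gamma$, so by \cite[Proposition~6.8]{Kim-Seo Ising-Potts} the Potts classification yields a dichotomy: either $\sigma$ is a one-strip configuration between two distinct spins $a\ne b$, or $\sigma$ lies in a Potts monochromatic neighborhood $\mathcal{N}_{\mathrm{Potts}}(\mathbf{a})$ for a unique spin $a$. I would then refine each branch using the identity
\[
H(\sigma)-H_{\mathrm{Potts}}(\sigma)\;=\;3\cdot\#\{x\sim y:\{\sigma(x),\sigma(y)\}=\{-1,+1\}\},
\]
which isolates the only discrepancy between the two Hamiltonians.

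In the strip branch, if the pair $(a,b)$ were the bad pair $\{-1,+1\}$, every one of the $\ge K$ interface edges would contribute $4$ instead of $1$, giving $H(\sigma)\ge 4K>K+1=\Gamma$ by \eqref{e_KL}. Hence $(a,b)$ is good and $\sigma\in\mathcal{R}_v^{a,b}$ for some $v$. To pin down $v\in\llbracket 2,L-2\rrbracket$ and the singleton claim $\mathcal{N}(\sigma)=\{\sigma\}$, I would run a direct single-flip energy-variation check: any flip either creates an isolated islet in the bulk (cost $\ge +2$, with $+16$ if the flip produces a bad-pair neighborhood) or acts on the flat interface (cost $+2$ at interior sites, $+1$ at a corner). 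In all these cases $H$ reaches at least $\Gamma$, so no $(\Gamma-1)$-path leaves $\sigma$. The borderline cases $v\in\{1,L-1\}$ must be excluded because flipping the corner of a one-row strip has zero net cost, producing nontrivial $(\Gamma-1)$-paths; those configurations fall into family (T2) instead.

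In the Potts monochromatic branch, the same bad-pair accounting excludes $\{-1,+1\}$-adjacencies: any configuration in $\mathcal{N}_{\mathrm{Potts}}(\mathbf{a})\cap\{H<\Gamma\}$ can support only the spin $a$ and its good-pair partner, since even a single bad edge would combine with the Potts isoperimetric input to force $H\ge\Gamma$. Hence $H$ and $H_{\mathrm{Potts}}$ coincide on this set, and any Potts $(\Gamma-1)$-path from $\mathbf{a}$ to $\sigma$ remains a $(\Gamma-1)$-path for $H$; this gives $\sigma\in\mathcal{N}(\mathbf{a})$ and consequently $\mathcal{N}(\sigma)=\mathcal{N}(\mathbf{a})$. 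Uniqueness of $a$ is then immediate from Theorem~\ref{t_Ebarrier}: if $\sigma$ lay in $\mathcal{N}(\mathbf{a})\cap\mathcal{N}(\mathbf{a}')$ for some $a\ne a'$, concatenating the two $(\Gamma-1)$-paths would connect $\mathbf{a}$ and $\mathbf{a}'$ at height $\le\Gamma-1$, contradicting $\Phi(\mathbf{a},\mathbf{a}')=\Gamma$.

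The main obstacle I anticipate is the second half of the refinement, namely ruling out $\{-1,+1\}$-adjacencies inside the Potts monochromatic neighborhood. Unlike the rigid strip case where $4K>\Gamma$ is immediate, near-monochromatic configurations are flexible -- small droplets, bumps, detached islets -- and the bound requires combining the Potts isoperimetric structure from \cite{Kim-Seo Ising-Potts} with a careful edge count showing that any connected region hosting a non-$a$ spin already commits enough Potts energy that the $+3$ excess of a single bad edge pushes $H$ above $\Gamma$. Formulating this cleanly, rather than redoing the Potts isoperimetry by hand, will be the technical heart of the proof.
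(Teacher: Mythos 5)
Your reduction to the Potts model via $H \ge H_{\mathrm{Potts}}$ is a reasonable starting point, and the strip branch is handled correctly (a $\{-1,+1\}$-strip costs at least $4K > \Gamma$). However, the monochromatic branch contains a false claim that sinks the argument. You assert that any $\sigma \in \mathcal{N}_{\mathrm{Potts}}(\mathbf{a})\cap\{H<\Gamma\}$ has no $\{-1,+1\}$-adjacencies, so that $H=H_{\mathrm{Potts}}$ on the working set and Potts $(\Gamma-1)$-paths automatically lift to $H$-$(\Gamma-1)$-paths. This is false once $K$ is moderately large, because the bad-edge excess is an additive constant while $\Gamma = K+1$ grows. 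Concretely, for $K\ge 16$, let $\sigma$ equal $\mathbf{-1}$ except for a single $+1$ at an interior site: then $H_{\mathrm{Potts}}(\sigma)=4$, $H(\sigma)=16\le K=\Gamma-1<\Gamma$, $\sigma\in\mathcal{N}_{\mathrm{Potts}}(\mathbf{-1})$ (one Potts flip suffices), yet $\sigma$ has four bad edges. A lighter example is a $(-1,+1)$ domino in a $\mathbf{0}$ background for $K\ge 10$, which has $H=10<\Gamma$ and one bad edge and lies in $\mathcal{N}_{\mathrm{Potts}}(\mathbf{0})$. So the step you flagged as the "main obstacle" is not merely technically delicate; the mechanism you propose for closing it (bad edge plus Potts isoperimetry forces $H\ge\Gamma$) is simply untrue.

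Note that these examples do not refute the proposition itself — each such $\sigma$ does lie in $\mathcal{N}(\mathbf{a})$ via a direct flipping path that stays at level $\le H(\sigma)$ — but they refute the identity $H=H_{\mathrm{Potts}}$ on $\mathcal{N}_{\mathrm{Potts}}(\mathbf{a})\cap\{H<\Gamma\}$, and therefore refute the inference that a Potts $(\Gamma-1)$-path is automatically an $H$-$(\Gamma-1)$-path. The correct route is not to outlaw bad edges but to use the droplet-type structural description of the near-monochromatic Potts family and directly construct an $H$-nonincreasing (or at least $H$-bounded-by-$H(\sigma)$) erosion path from $\sigma$ to $\mathbf{a}$; this essentially reproduces the Potts isoperimetric argument for the Hamiltonian $H$ rather than sidestepping it. That is the content of the reference the paper cites — the paper's own proof is omitted with the statement that the result is an analogue of \cite[Proposition~6.8]{Kim-Seo Ising-Potts} and is to be proved by the same direct structural classification, not by a reduction of the sort you attempt.
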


\subsection{\label{sec4.4}Proof of Theorem \ref{t_LDTresults}}

In this subsection, we prove Theorem \ref{t_LDTresults}. To this
end, we need the following result regarding the valley depths of the
entire energy landscape.
\begin{lem}
\label{l_depth}We have the following upper bounds for the depths
of the valleys:
\begin{enumerate}
\item For all $\sigma\in\mathcal{X}$ and $\mathbf{s}\in\mathcal{S}$, it
holds that $\Phi(\sigma,\,\mathbf{s})-H(\sigma)\le\Gamma$.
\item For all $\sigma\in\mathcal{X}\setminus\mathcal{S}$, it holds that
$\Phi(\sigma,\,\mathcal{S})-H(\sigma)<\Gamma$.
\end{enumerate}
\end{lem}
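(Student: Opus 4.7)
The plan is to prove part (2) first and then deduce part (1) from it by concatenation with the $\Gamma$-paths between ground states supplied by Theorem \ref{t_Ebarrier}. Note that since $H$ is integer-valued, part (2) is equivalent to $\Phi(\sigma, \mathcal{S}) \le H(\sigma) + \Gamma - 1$ for $\sigma \notin \mathcal{S}$. For the reduction: when $\sigma \in \mathcal{S}$, part (1) is exactly Theorem \ref{t_Ebarrier}; when $\sigma \notin \mathcal{S}$, part (2) produces some $\mathbf{s}_0 \in \mathcal{S}$ with $\Phi(\sigma, \mathbf{s}_0) \le H(\sigma) + \Gamma - 1$, and concatenating a realizing path with a $\Gamma$-path from $\mathbf{s}_0$ to an arbitrary $\mathbf{s}$ gives
\[
\Phi(\sigma, \mathbf{s}) \le \max\{H(\sigma) + \Gamma - 1,\; \Gamma\} \le H(\sigma) + \Gamma,
\]
where the last inequality uses $H(\sigma) \ge 1$.

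For part (2), I would split on whether $H(\sigma) < \Gamma$ or $H(\sigma) \ge \Gamma$. In the first regime, Proposition \ref{p_E<Gamma} is directly applicable. If $\sigma$ is of type \textbf{(T1)}, then $\sigma \in \mathcal{R}_v^{a,b}$ for some good pair $(a,b)$ and $v \in \llbracket 2, L-2\rrbracket$, so $H(\sigma) = \Gamma - 1$ by Remark \ref{r_H}, and the canonical path through $\sigma$ from $\mathbf{a}$ to $\mathbf{b}$ gives $\Phi(\sigma, \mathbf{a}) \le \Gamma$; hence $\Phi(\sigma, \mathbf{a}) - H(\sigma) \le 1 < \Gamma$. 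If $\sigma$ is of type \textbf{(T2)}, then $\sigma \in \mathcal{N}(\mathbf{a})$ for some $a$, so by definition there is a $(\Gamma - 1)$-path from $\sigma$ to $\mathbf{a}$; combined with $H(\sigma) \ge 1$, this yields $\Phi(\sigma, \mathbf{a}) - H(\sigma) \le \Gamma - 2 < \Gamma$.

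In the regime $H(\sigma) \ge \Gamma$, I would prove the stronger bound $\Phi(\sigma, \mathcal{S}) \le H(\sigma)$, which immediately gives $\Phi(\sigma, \mathcal{S}) - H(\sigma) \le 0 < \Gamma$. The strategy is to build an $H(\sigma)$-path from $\sigma$ to some configuration $\sigma'$ with $H(\sigma') < \Gamma$; Case A then identifies $\sigma' \in \widehat{\mathcal{N}}(\mathbf{s})$ for some $\mathbf{s}$ and supplies a $\Gamma$-path from $\sigma'$ to $\mathbf{s}$, and since $\Gamma \le H(\sigma)$ the concatenation remains an $H(\sigma)$-path. The descending $H(\sigma)$-path is constructed by iterated spin flipping: whenever $\sigma$ has a lower-energy neighbor, descend; otherwise, perform a flat move exploiting the Blume--Capel structure, namely inserting a mediating $0$-spin between adjacent $-1$/$+1$ pairs, which is never energy-raising because $((\pm 1) - 0)^2 = 1 < 4 = ((-1) - (+1))^2$.

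The hard part is the descent argument in the second regime: one must rule out "high metastable wells," i.e., local minima at energy $\ge \Gamma$ from which no flat path can ever reach a strict descent. I expect the rigorous verification to require a case analysis of the boundary geometry of spin clusters, combining the $0$-mediation observation above with the classical two-spin fact that in each of the subspaces $\mathcal{X}^{-1,0}$ and $\mathcal{X}^{0,+1}$ (which mimic the Ising model), any non-monochromatic configuration admits a boundary spin whose flip does not raise the energy. This dual mechanism—flatly converting $-1$/$+1$ interfaces into $-1$/$0$ or $0$/$+1$ ones and then applying Ising-type moves within a two-spin subspace—should yield the required $H(\sigma)$-path down to the sublevel set $\{H < \Gamma\}$.
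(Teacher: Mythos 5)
Your proposal takes a genuinely different route from the paper: the paper disposes of this lemma by citing \cite[Theorem 2.1]{N-Z} and noting that the same arguments adapt to the Blume--Capel Hamiltonian, with a remark pointing to an explicit-path construction in \cite[Lemma 6.11]{Kim-Seo Ising-Potts} as an alternative. Your plan is in the spirit of that alternative, and the scaffolding is correct: the reduction of part (1) to part (2) via ultrametricity of $\Phi$ together with $H(\sigma)\ge 1$ for $\sigma\notin\mathcal{S}$ is sound, the regime $H(\sigma)<\Gamma$ is handled cleanly via Proposition \ref{p_E<Gamma} (both the \textbf{(T1)} case using canonical $\Gamma$-paths and the \textbf{(T2)} case using the defining $(\Gamma-1)$-path), and the observation that an $H(\sigma)$-path reaching $\{H<\Gamma\}$ finishes the regime $H(\sigma)\ge\Gamma$ (since one can append a $\Gamma$-path to $\mathcal{S}$, and $\Gamma\le H(\sigma)$) is correct.

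However, there is a genuine gap: the step that is only sketched --- the existence of a non-increasing spin-flip path from an arbitrary $\sigma$ with $H(\sigma)\ge\Gamma$ down to the sublevel set $\{H<\Gamma\}$ --- is the entire substance of the lemma, and the rest is bookkeeping. Your ``$0$-mediation'' computation is correct (flipping a $-1$ to $0$ at a site with a $+1$ neighbor is never energy-raising), but it does not by itself exclude cycling through flat moves, so you need a monotone auxiliary quantity or a terminating peeling order, not just the local move. Moreover, after all $(-1)$-$(+1)$ bonds have been mediated, the configuration generally still carries all three spins, so it does not lie in $\mathcal{X}^{-1,0}$ or $\mathcal{X}^{0,+1}$, and the Ising two-spin argument cannot be invoked directly; a genuinely three-spin descent (e.g., contracting the outermost spin cluster corner by corner with a check that each flip is flat or downhill) must be carried out, with a case analysis of corner geometries as you anticipate. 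Until that construction is written out and shown to terminate, the proof is incomplete precisely at the point where the difficulty lies.
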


\begin{proof}
The same assertions for the Metropolis dynamics on the Potts model
are proved in \cite[Theorem 2.1]{N-Z}. Because the same arguments
work for our Blume--Capel model as well, we omit the proof.
\end{proof}
\begin{rem}
An alternative proof can be found in \cite[Lemma 6.11]{Kim-Seo Ising-Potts}
which provides an explicit path that guarantees the upper bounds stated
in Lemma \ref{l_depth}.
\end{rem}

Based on the previous lemma, we give a formal proof of Theorem \ref{t_LDTresults}.
\begin{proof}[Proof of Theorem \ref{t_LDTresults}]
 By the general theory developed in \cite{N-Z,N-Z-B}, Theorem \ref{t_Ebarrier}
and Lemma \ref{l_depth} are sufficient to conclude the assertions
on the transition time, mixing time, and spectral gap given in Theorem
\ref{t_LDTresults}.
\end{proof}

\section{\label{sec5}Typical and Gateway Configurations}

In this section, we define the concepts of typical and gateway configurations
and investigate their several basic properties. The concepts are analogues
of those defined in \cite[Section 7]{Kim-Seo Ising-Potts}. We note
that even though the results are similar to those in \cite{Kim-Seo Ising-Potts},
we still thoroughly review the notation here because \textit{there
indeed exist technical differences due to the non-symmetry of the
Blume--Capel model (cf. Remark \ref{r_modelsym}).}

\subsection{\label{sec5.1}Typical configurations}

\begin{figure}
\includegraphics[width=13cm]{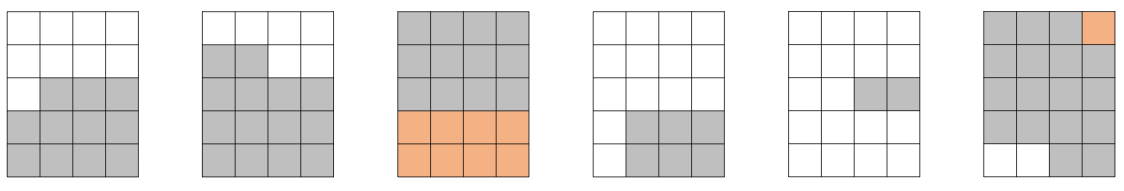}\caption{\label{fig5.1}Typical configurations: bulk ones (the first three)
and edge ones (the last three).}
\end{figure}

\begin{defn}[Typical configurations]
\label{d_typ} Here, we define typical configurations. We refer to
Figure \ref{fig5.1} for a visualization.
\begin{itemize}
\item Fix a good pair $(a,\,b)$. The collection of \textit{bulk typical
configurations} between $\mathbf{a}$ and $\mathbf{b}$ is defined
as
\begin{equation}
\mathcal{B}^{a,b}=\bigcup_{v=2}^{L-2}\mathcal{R}_{v}^{a,b}\cup\bigcup_{v=2}^{L-3}\mathcal{Q}_{v}^{a,b}.\label{e_Bab}
\end{equation}
Moreover, we define (cf. Remark \ref{r_H})
\[
\mathcal{B}_{\Gamma}^{a,b}=\bigcup_{v=2}^{L-3}\mathcal{Q}_{v}^{a,b}=\{\sigma\in\mathcal{B}^{a,b}:H(\sigma)=\Gamma\}.
\]
Clearly, we have $\mathcal{B}^{a,b}=\mathcal{B}^{b,a}$ and $\mathcal{B}_{\Gamma}^{a,b}=\mathcal{B}_{\Gamma}^{b,a}$.
\item For a spin $a$, the collection of \textit{edge typical configurations}
near $\mathbf{a}$ is defined as
\begin{equation}
\mathcal{E}^{a}=\widehat{\mathcal{N}}(\mathbf{a};\mathcal{B}_{\Gamma}^{-1,0}\cup\mathcal{B}_{\Gamma}^{0,+1}).\label{e_Ea}
\end{equation}
\item Finally, the collection of \textit{typical configurations} is defined
as
\begin{equation}
\mathcal{T}=\mathcal{B}^{-1,0}\cup\mathcal{B}^{0,+1}\cup\mathcal{E}^{-1}\cup\mathcal{E}^{0}\cup\mathcal{E}^{+1}.\label{e_T}
\end{equation}
\end{itemize}
\end{defn}

Then, we summarize the following properties for the typical configurations.
Rigorous verifications can be found in \cite[Section 7.2]{Kim-Seo Ising-Potts}
and thus we do not repeat them.
\begin{prop}
\label{p_typ}The following properties hold for the typical configurations.
\begin{enumerate}
\item The collections $\mathcal{E}^{-1}$, $\mathcal{E}^{0}$, and $\mathcal{E}^{+1}$
are disjoint.
\item We have
\begin{align}
\mathcal{E}^{-1}\cap\mathcal{B}^{-1,0} & =\mathcal{R}_{2}^{-1,0},\;\;\;\;\mathcal{E}^{0}\cap\mathcal{B}^{-1,0}=\mathcal{R}_{L-2}^{-1,0},\label{e_typ1}\\
\mathcal{E}^{+1}\cap\mathcal{B}^{0,+1} & =\mathcal{R}_{L-2}^{0,+1},\;\;\;\;\mathcal{E}^{0}\cap\mathcal{B}^{0,+1}=\mathcal{R}_{2}^{0,+1}.\label{e_typ2}
\end{align}
\item We have $\mathcal{E}^{-1}\cap\mathcal{B}^{0,+1}=\mathcal{E}^{+1}\cap\mathcal{B}^{-1,0}=\emptyset$.
\item Recall the definition \eqref{e_T} of $\mathcal{T}$. Then, $\widehat{\mathcal{N}}(\mathcal{S})=\mathcal{T}$.
\end{enumerate}
\end{prop}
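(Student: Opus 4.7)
\medskip
\noindent\textbf{Proof proposal for Proposition \ref{p_typ}.}

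The plan is to reduce every part of the proposition to a careful analysis of the configurations that can appear along $\Gamma$-paths, making essential use of Proposition \ref{p_E<Gamma} (classification of configurations with $H<\Gamma$) together with Lemma \ref{l_set} and Theorem \ref{t_Ebarrier}. The core observation, to be established up front as an auxiliary lemma, is a structural description of the configurations $\sigma$ with $H(\sigma)=\Gamma$ that lie on some $\Gamma$-path starting from a ground state and avoiding $\mathcal{B}^{-1,0}_\Gamma\cup\mathcal{B}^{0,+1}_\Gamma$. Such configurations should be exhaustively enumerated (``edge saddles''), analogously to \cite[Section 7.2]{Kim-Seo Ising-Potts}; this is where the Blume--Capel asymmetry between spin $0$ and spins $\pm 1$ has to be watched carefully, because near $\mathbf{-1}$ and $\mathbf{+1}$ the admissible edge saddles look different from those near $\mathbf{0}$ (compare Remark \ref{r_modelsym} and the forthcoming Section \ref{sec6}).

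With that dictionary in hand, part (2) follows by two-sided inclusion. For the forward inclusion, one exhibits explicit $\Gamma$-paths: starting from $\mathbf{-1}$, the canonical path through $\zeta^{\pm}_{0}$, $\mathcal{R}^{-1,0}_1$, $\mathcal{Q}^{-1,0}_1$ reaches $\mathcal{R}^{-1,0}_2$ while touching only elements of $\mathcal{Q}^{-1,0}_0\cup\mathcal{Q}^{-1,0}_1$, neither of which lies in $\mathcal{B}^{-1,0}_\Gamma$ (whose index range is $v\in\llbracket2,L-3\rrbracket$); this gives $\mathcal{R}^{-1,0}_2\subseteq\mathcal{E}^{-1}$. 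For the reverse inclusion, observe that any element of $\mathcal{B}^{-1,0}$ with $v\ge 3$ can only be entered from $\mathbf{-1}$ through a configuration in $\mathcal{Q}^{-1,0}_v$ for some $v\in\llbracket2,L-3\rrbracket$, since Proposition \ref{p_E<Gamma}(T1) pins down the $(\Gamma-1)$-connected component containing any $\sigma\in\mathcal{R}^{-1,0}_v$ as the singleton $\{\sigma\}$. The remaining three identities in (2) are proved identically, by reflecting the lattice (for $L-2$) and by exchanging $\{-1,0\}\leftrightarrow\{0,+1\}$ (using the absence of model symmetry for $-1\leftrightarrow 0$ only cosmetically, since each good pair is handled separately).

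Parts (1) and (3) then reduce to showing that no $\Gamma$-path in $\mathcal{X}\setminus(\mathcal{B}^{-1,0}_\Gamma\cup\mathcal{B}^{0,+1}_\Gamma)$ can connect two distinct ground states, respectively no such path from $\mathbf{-1}$ can reach $\mathcal{B}^{0,+1}$. Indeed, any $\Gamma$-path from $\mathbf{a}$ to $\mathbf{b}$ (with $\mathbf{a}\ne\mathbf{b}$) must, by Theorem \ref{t_Ebarrier}, touch the energy level $\Gamma$, and by the auxiliary lemma the first visit to that level occurs in $\mathcal{B}^{a',b'}_\Gamma$ for some good pair $(a',b')$; tracing the one-dimensional traversal through the $\mathcal{R}^{a',b'}_v$--$\mathcal{Q}^{a',b'}_v$ ladder forced by Proposition \ref{p_E<Gamma}, one sees that the only way to change the ``type'' $(a',b')$ of the underlying good pair is to pass through a ground state, contradicting $\mathbf{a}\ne\mathbf{b}$. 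If a common element existed in $\mathcal{E}^{a_1}\cap\mathcal{E}^{a_2}$ in (1), concatenating the two avoiding $\Gamma$-paths would produce precisely such a forbidden path; for (3), any $\Gamma$-path from $\mathbf{-1}$ into $\mathcal{B}^{0,+1}$ would have to enter $\mathcal{B}^{0,+1}_\Gamma$ from outside, hence by (2) pass through a canonical $(0,+1)$-type configuration at $v\in\{0,1,L-1,L\}$, which in turn forces a prior visit to $\mathbf{0}$, a contradiction.

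Finally, part (4) splits into the two inclusions. The inclusion $\mathcal{T}\subseteq\widehat{\mathcal{N}}(\mathcal{S})$ is built into the definitions: each $\mathcal{E}^a$ is directly the $\widehat{\mathcal{N}}$-neighborhood of $\mathbf{a}$ in $\mathcal{X}\setminus(\mathcal{B}^{-1,0}_\Gamma\cup\mathcal{B}^{0,+1}_\Gamma)$, and every bulk configuration is reached by a canonical $\Gamma$-path. For the reverse inclusion, given $\sigma\in\widehat{\mathcal{N}}(\mathcal{S})$, fix a $\Gamma$-path from some $\mathbf{s}\in\mathcal{S}$ to $\sigma$. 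If the path never visits $\mathcal{B}^{-1,0}_\Gamma\cup\mathcal{B}^{0,+1}_\Gamma$ then $\sigma\in\mathcal{E}^{s}$ by definition; otherwise, following the path from its last entry into $\mathcal{B}^{-1,0}_\Gamma\cup\mathcal{B}^{0,+1}_\Gamma$, Proposition \ref{p_E<Gamma} together with the auxiliary lemma shows that every subsequent configuration is either a bulk typical configuration or an edge typical configuration of one of the two nearby ground states. I would formalize this last step with one application of Lemma \ref{l_set} to decompose $\widehat{\mathcal{N}}(\mathcal{S})$ as $\widehat{\mathcal{N}}(\mathcal{S};\mathcal{B}^{-1,0}_\Gamma\cup\mathcal{B}^{0,+1}_\Gamma)\cup\widehat{\mathcal{N}}(\mathcal{B}^{-1,0}_\Gamma\cup\mathcal{B}^{0,+1}_\Gamma;\mathcal{S})$, matching the first piece to $\mathcal{E}^{-1}\cup\mathcal{E}^{0}\cup\mathcal{E}^{+1}$ and the second piece to the bulk sets via the canonical ladder structure.

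The main obstacle, and the step that will require the most careful bookkeeping, is the auxiliary lemma characterizing configurations with $H=\Gamma$ that lie on admissible $\Gamma$-paths, since the asymmetry of the Hamiltonian forces separate treatments for neighborhoods of $\mathbf{0}$ versus $\mathbf{\pm 1}$. Once this structural lemma is in place, all four parts follow by combinatorial unfolding along the one-dimensional $\mathcal{R}_v$--$\mathcal{Q}_v$ skeleton, with no new energy estimates needed beyond those already secured by Theorem \ref{t_Ebarrier} and Proposition \ref{p_E<Gamma}.
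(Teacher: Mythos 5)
The paper's own ``proof'' of Proposition \ref{p_typ} is a pointer to \cite[Section 7.2]{Kim-Seo Ising-Potts}, so your proposal is in effect trying to reconstruct an argument the paper leaves implicit. The architecture you lay out --- reduce everything to a classification of $H=\Gamma$ configurations reachable from a ground state along $\Gamma$-paths avoiding $\mathcal{B}^{-1,0}_\Gamma\cup\mathcal{B}^{0,+1}_\Gamma$, then unfold via Proposition \ref{p_E<Gamma} and Lemma \ref{l_set} --- is the right one and matches the shape of the companion paper's argument.

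The difficulty is that the ``auxiliary lemma'' you set aside is not bookkeeping; it is the entire substance of the proof, and it cannot be extracted from Proposition \ref{p_E<Gamma}, which classifies only the strictly sub-$\Gamma$ configurations and says nothing about which $H=\Gamma$ configurations border them. A concrete leak in your reverse inclusion for (2): take $\sigma=\zeta^+_{L-2}\in\mathcal{R}^{-1,0}_{L-2}$. Its neighbors at level $\Gamma$ are $\zeta^{+\pm}_{L-3,K-1}\in\mathcal{Q}^{-1,0}_{L-3}\subseteq\mathcal{B}^{-1,0}_\Gamma$ (forbidden) but also $\zeta^{+\pm}_{L-2,1}\in\mathcal{Q}^{-1,0}_{L-2}$, which is \emph{not} in $\mathcal{B}^{-1,0}_\Gamma$ since that index range stops at $L-3$; these configurations belong to the $\mathbf{0}$-side edge saddle set $\mathcal{O}^0$. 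So Proposition \ref{p_E<Gamma}(T1) together with the index ranges does not rule out an admissible $\Gamma$-path from $\mathbf{-1}$ reaching $\zeta^+_{L-2}$. Excluding this requires knowing that $\mathbf{-1}$ cannot reach $\mathcal{O}^0$ by an admissible path, which is exactly the disjointness in (1) --- the ordering (2) $\Rightarrow$ (1),(3) you propose is therefore circular unless the auxiliary lemma already supplies the separation.

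What that lemma must deliver is the pairwise-disjoint structure of the $\Gamma$-level edge saddles $\mathcal{O}^{-1},\mathcal{O}^{0},\mathcal{O}^{+1}$, the fact that $\mathcal{O}^a$ borders sub-$\Gamma$ configurations only inside $\mathcal{I}^a$, and that every $H=\Gamma$ configuration on an admissible path from $\mathbf{a}$ lies in $\mathcal{O}^a$ or on the first/last $\mathcal{Q}$-rung of a canonical ladder. This is precisely what the paper builds in Sections \ref{sec5.2} and \ref{sec6} (the sets $\mathcal{Z}^{a,b}$, $\mathcal{O}^a$, $\mathcal{I}^a$, and the ladder projection of Lemmas \ref{l_auxK<L} and \ref{l_auxK=00003DL}); because of the Blume--Capel asymmetry you correctly flag, the enumeration has to be carried out separately near $\mathbf{0}$ and near $\mathbf{\pm 1}$. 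Until that enumeration is done, the proposal is a correct conditional outline rather than a complete proof.
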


\begin{figure}
\includegraphics[width=13cm]{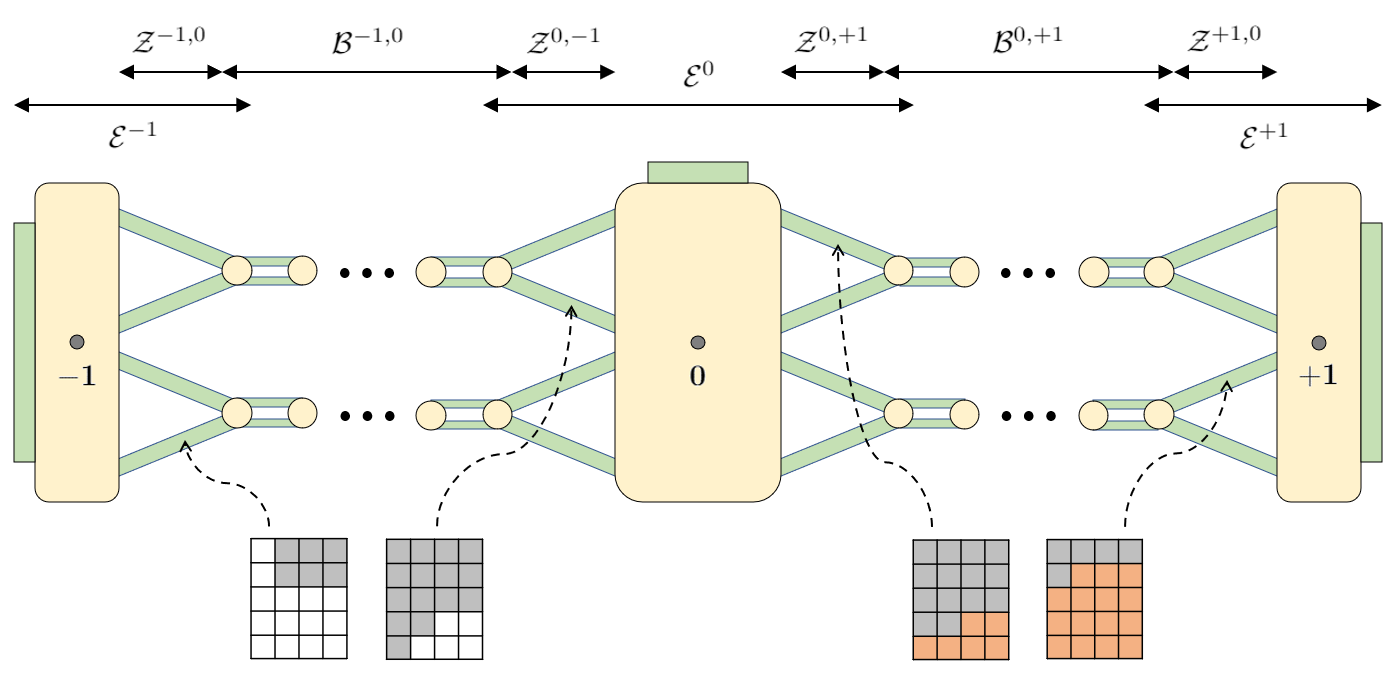}\caption{\label{fig5.2}Energy landscape of $\widehat{\mathcal{N}}(\mathcal{S})$
for the case of $K<L$. Green regions represent the configurations
with energy exactly $\Gamma$, and yellow regions represent the ones
with energy less than $\Gamma$. Configurations below are examples of gateway configurations.}
\end{figure}

\begin{rem}[Edge structure of typical configurations]
\label{r_typedge} Based on Proposition \ref{p_typ}, we have the
following decomposition of $E(\widehat{\mathcal{N}}(\mathcal{S}))=E(\mathcal{T})$
(see Figure \ref{fig5.2} for the full energy landscape):
\[
E(\widehat{\mathcal{N}}(\mathcal{S}))=E(\mathcal{B}^{-1,0})\cup E(\mathcal{B}^{0,+1})\cup E(\mathcal{E}^{-1})\cup E(\mathcal{E}^{0})\cup E(\mathcal{E}^{+1}).
\]
To prove this fact, we check that the members constituting $\mathcal{T}$
(cf. \eqref{e_T}) are \textit{separated}, in the sense that for members
$\mathcal{A}$ and $\mathcal{A}'$,
\[
\{\sigma,\,\sigma'\}\in E(\mathcal{A}\cup\mathcal{A}')\;\;\;\;\text{implies}\;\;\;\;\sigma,\,\sigma'\in\mathcal{A}\text{ or }\sigma,\,\sigma'\in\mathcal{A}'.
\]
Indeed, $\mathcal{E}^{a}$ for spins $a$ are separated by part (1)
of Proposition \ref{p_typ}. The collections $\mathcal{B}^{-1,0}$
and $\mathcal{B}^{0,+1}$ are clearly separated.

To check that a bulk collection $\mathcal{B}^{a,b}$ and an edge collection
$\mathcal{E}^{a'}$ are separated, it suffices to prove that if $\sigma\in\mathcal{B}^{a,b}$
and $\sigma'\in\mathcal{E}^{a'}\setminus\mathcal{B}^{a,b}$ with $\sigma\sim\sigma'$,
then $\sigma\in\mathcal{E}^{a'}$. To this end, as $\sigma'\notin\mathcal{B}^{a,b}$,
we must have $\sigma\in\mathcal{R}_{2}^{a,b}$ or $\sigma\in\mathcal{R}_{L-2}^{a,b}$.
For the former case, as $\mathcal{R}_{2}^{a,b}\subseteq\mathcal{E}^{a}$,
by part (1) of Proposition \ref{p_typ} we obtain $a=a'$ and thus
$\sigma\in\mathcal{E}^{a'}$. For the latter case, as $\mathcal{R}_{L-2}^{a,b}\subseteq\mathcal{E}^{b}$,
we obtain $b=a'$ and thus $\sigma\in\mathcal{E}^{a'}$.
\end{rem}

\subsection{\label{sec5.2}Gateway configurations}

Here, we define gateway configurations of the dynamics. We again refer
to Figure \ref{fig5.2} for a visualization of the role and examples
of gateway configurations.
\begin{defn}[Gateway configurations]
\label{d_gate} As for the typical configurations, we define gateway
configurations between $\mathbf{a}$ and $\mathbf{b}$ for good pairs
$(a,\,b)$. Thus, we fix a good pair $(a,\,b)$. We define $\mathcal{Z}^{a,b}$
as
\begin{align}
\{\sigma\in\mathcal{X}: & \exists\text{a path }(\omega_{n})_{n=0}^{N}\text{ in }\mathcal{X}\setminus\mathcal{B}_{\Gamma}^{a,b}\text{ with }N\ge1\text{ such that}\nonumber \\
 & \omega_{0}\in\mathcal{R}_{2}^{a,b},\;\omega_{N}=\sigma,\;\text{and}\;H(\omega_{n})=\Gamma\text{ for all }n\in\llbracket1,\,N\rrbracket\}.\label{e_Zab}
\end{align}
Note that $\mathcal{Z}^{a,b}\neq\mathcal{Z}^{b,a}$. Then, we define
the collection of \textit{gateway configurations} between $\mathbf{a}$
and $\mathbf{b}$ as
\begin{equation}
\mathcal{G}^{a,b}=\mathcal{Z}^{a,b}\cup\mathcal{B}^{a,b}\cup\mathcal{Z}^{b,a},\label{e_gate}
\end{equation}
which is indeed a decomposition of $\mathcal{G}^{a,b}$. As $\mathcal{B}^{a,b}=\mathcal{B}^{b,a}$,
we have $\mathcal{G}^{a,b}=\mathcal{G}^{b,a}$.
\end{defn}

Then, we have the following properties for the gateway configurations.
\begin{lem}
\label{l_gate}Fix a good pair $(a,\,b)$ and suppose that $\sigma,\,\zeta\in\mathcal{X}$
satisfy
\[
\sigma\in\mathcal{G}^{a,b},\;\zeta\notin\mathcal{G}^{a,b},\;\sigma\sim\zeta,\;\text{and}\;H(\zeta)\le\Gamma.
\]
Then, we have either $\zeta\in\mathcal{N}(\mathbf{a})$ and $\sigma\in\mathcal{Z}^{a,b}$
or $\zeta\in\mathcal{N}(\mathbf{b})$ and $\sigma\in\mathcal{Z}^{b,a}$.
\end{lem}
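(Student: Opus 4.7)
The proof proceeds by case analysis on the decomposition $\mathcal{G}^{a,b} = \mathcal{Z}^{a,b} \cup \mathcal{B}^{a,b} \cup \mathcal{Z}^{b,a}$ and on whether $H(\zeta) = \Gamma$ or $H(\zeta) < \Gamma$. Two structural facts underlie the argument. (S1) Every configuration in $\mathcal{G}^{a,b}$ has spins only in $\{a, b\}$: for $\mathcal{B}^{a,b}$ this is immediate from Definition \ref{d_can}, while for $\mathcal{Z}^{a,b}$ it follows because inserting the third spin (not in $\{a, b\}$) into any $\{a, b\}$-configuration costs at least $+2$ at a corner of the $b$-region and at least $+8$ inside the $a$-region (since the third spin $c$ satisfies $(c-a)^2 = 4$), so no $\Gamma$-plateau path starting from $\mathcal{R}_2^{a,b}$ can introduce the third color. (S2) By Theorem \ref{t_Ebarrier} together with the uniqueness assertion of Proposition \ref{p_E<Gamma}(T2), the neighborhoods $\mathcal{N}(\mathbf{c})$ for distinct spins $c$ are pairwise disjoint, and any configuration in $\mathcal{N}(\mathbf{c})$ carries an overwhelming majority of $c$-spins; otherwise, splicing a $(\Gamma-1)$-path to $\mathbf{c}$ with a companion low-energy path to the dominant color's ground state would bridge two distinct ground states below the barrier $\Gamma$.

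The first reduction rules out $H(\zeta) = \Gamma$. If $\sigma \in \mathcal{Z}^{a,b}$, appending the step $\sigma \to \zeta$ to the $\Gamma$-plateau path in definition \eqref{e_Zab} gives either $\zeta \in \mathcal{B}_\Gamma^{a,b} \subseteq \mathcal{B}^{a,b}$ or $\zeta \in \mathcal{Z}^{a,b}$, so $\zeta \in \mathcal{G}^{a,b}$ in both cases, contradicting the hypothesis. The case $\sigma \in \mathcal{Z}^{b,a}$ is symmetric. If $\sigma \in \mathcal{B}^{a,b}$, direct enumeration of single-spin flips from $\mathcal{R}_v^{a,b}$ and $\mathcal{Q}_v^{a,b}$ shows that every height-$\Gamma$ neighbor lies in $\mathcal{G}^{a,b}$: interior values of $v$ yield neighbors within $\mathcal{B}^{a,b}$, while the boundary values $v = 2$ and $v = L - 2$ produce neighbors in $\mathcal{Q}_1^{a,b}$ or $\mathcal{Q}_{L-2}^{a,b}$, which in turn lie in $\mathcal{Z}^{a,b}$ or $\mathcal{Z}^{b,a}$ via the one-step plateau path from $\mathcal{R}_2^{a,b}$ or $\mathcal{R}_2^{b,a} = \mathcal{R}_{L-2}^{a,b}$ (noting that these configurations sit outside $\mathcal{B}_\Gamma^{a,b}$).

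Hence $H(\zeta) < \Gamma$, and Proposition \ref{p_E<Gamma} applies to $\zeta$. Under option (T1), $\zeta \in \mathcal{R}_v^{a', b'}$ has only $\{a', b'\}$-spins; since $\sigma$ differs from $\zeta$ at a single site yet, by fact (S1), contains at least $2K - 1$ sites of each of $a$ and $b$, the spin pairs must coincide, forcing $\zeta \in \mathcal{B}^{a,b} \subseteq \mathcal{G}^{a,b}$, a contradiction. Under option (T2), $\zeta \in \mathcal{N}(\mathbf{c})$ for a unique spin $c$; since $\zeta$ has at most one spin outside $\{a, b\}$, fact (S2) forces $c \in \{a, b\}$. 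To pair $c$ with the component of $\sigma$, I first rule out $\sigma \in \mathcal{B}^{a,b}$: for $\sigma \in \mathcal{R}_v^{a,b}$ every single flip raises the energy by at least $1$, so no descent below $\Gamma$ exists; and for $\sigma \in \mathcal{Q}_v^{a,b}$ with $v \in \llbracket 2, L - 3 \rrbracket$ the only energy-decreasing flips land in $\mathcal{R}_v^{a,b} \cup \mathcal{R}_{v+1}^{a,b} \subseteq \mathcal{B}^{a,b}$, again contradicting $\zeta \notin \mathcal{G}^{a,b}$. Hence $\sigma \in \mathcal{Z}^{a,b} \cup \mathcal{Z}^{b,a}$. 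By the plateau structure, $\sigma \in \mathcal{Z}^{a,b}$ inherits from $\mathcal{R}_2^{a,b}$ a majority of $a$-spins, as does $\zeta$; fact (S2) then forces $c = a$. The symmetric reasoning pairs $\sigma \in \mathcal{Z}^{b,a}$ with $c = b$, yielding the claimed dichotomy.

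The main technical obstacle is the boundary neighbor enumeration for $\mathcal{B}^{a,b}$: in the height-$\Gamma$ analysis one must identify $\mathcal{Q}_1^{a,b}$ and $\mathcal{Q}_{L-2}^{a,b}$ (which fall outside the canonical bulk $\mathcal{B}^{a,b}$) as genuinely captured by $\mathcal{Z}^{a,b}$ and $\mathcal{Z}^{b,a}$ through the one-step plateau construction, and verify the energetic bookkeeping that certifies these one-step paths remain outside $\mathcal{B}_\Gamma^{a,b}$; the remaining descent analysis for $\mathcal{Q}_v^{a,b}$ in the pairing step must likewise handle all four $\pm\pm$ protuberance variants in the definition of $\mathcal{Q}_v$.
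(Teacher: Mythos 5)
The paper gives no direct proof of this lemma---it delegates entirely to \cite[Lemma 8.5]{Kim-Seo Ising-Potts}---so I can only assess your proposal on its own merits. The global architecture (rule out $H(\zeta)=\Gamma$; apply Proposition \ref{p_E<Gamma} when $H(\zeta)<\Gamma$; eliminate type \textbf{(T1)}; in type \textbf{(T2)}, match the ground state $\mathbf{c}$ against $\sigma$'s side of the plateau by comparing spin majorities) is a sensible route, and the bookkeeping that eliminates $H(\zeta)=\Gamma$ and $\sigma\in\mathcal{B}^{a,b}$ checks out. The difficulty is that the central structural input (S1) is not actually established by the cost inventory you give. You only price introducing the third spin $c$ at a corner of the $b$-region ($\ge +2$) and ``inside'' the $a$-region ($\ge +8$; note this figure is specific to $(a,b)=(-1,0)$, since for $(0,+1)$ one has $(c-a)^2=1$). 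What is missing is the flip of an $a$-site sitting on the $a$/$b$ interface: for $(a,b)=(-1,0)$ the cost of flipping such a site to $c=+1$ is $4n$ where $n$ is the number of $a$-neighbours of that site, and this vanishes when $n=0$. Your enumeration therefore does not exclude that some plateau configuration reachable from $\mathcal{R}_2^{a,b}$ contains an isolated $a$-spin (for $(0,+1)$, the symmetric concern is an isolated $b$-spin), at which the third colour could enter at zero energy cost. Ruling this out is a genuine combinatorial statement about the shape of the $\Gamma$-plateau emanating from $\mathcal{R}_2^{a,b}$---exactly the control that the auxiliary-process projection of Lemmas \ref{l_auxK<L} and \ref{l_auxK=00003DL} encodes later in the paper---and a self-contained proof of (S1) needs some substitute for it.

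Two further steps lean on the same missing structural control. The claim that ``$\sigma$ contains at least $2K-1$ sites of each of $a$ and $b$'' is false for $\sigma\in\mathcal{Z}^{a,b}$ (the one-row-plus-unit-protuberance configuration there has only $K+1$ $b$-spins); the \textbf{(T1)} elimination can be repaired by counting from $\zeta$'s side instead---$\zeta\in\mathcal{R}_v^{a',b'}$ with $v\in\llbracket2,L-2\rrbracket$ has at least $2K$ sites of each of $a'$ and $b'$, hence so does $\sigma$ up to one site, which combined with (S1) forces $\{a',b'\}\subseteq\{a,b\}$---but as written the claim is incorrect. Likewise, the closing assertion that ``$\sigma\in\mathcal{Z}^{a,b}$ inherits from $\mathcal{R}_2^{a,b}$ a majority of $a$-spins'' is unproven: a priori the plateau could enlarge the $b$-region, and bounding the $b$-count below $KL/2$ again requires the same quantitative description of $\mathcal{Z}^{a,b}$ that (S1) needs. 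The skeleton is right, but the argument does not close without a genuine proof that plateau configurations stay thin and two-coloured.
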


\begin{proof}
This lemma can be proved in an identical manner to \cite[Lemma 8.5]{Kim-Seo Ising-Potts}.
\end{proof}

\subsection{\label{sec5.4}Lemma on equilibrium potentials and proof of Theorem
\ref{t_eqp}}

In this subsection, we prove Theorem \ref{t_eqp}. Before providing
the proof, we give an elementary estimate on equilibrium potentials
(cf. \eqref{e_eqpdef}), which is a generalization of \cite[Lemma 10.4]{Kim-Seo Ising-Potts}.
This lemma is used in the proof of Theorem \ref{t_eqp} and later
in Section \ref{sec8} to estimate the test flow. We refer to \cite[Lemmas 10.4 and 16.5]{Kim-Seo Ising-Potts}
for the proof.
\begin{lem}
\label{l_eqp}For disjoint and non-empty subsets $\mathcal{A}$ and
$\mathcal{B}$ of $\mathcal{S}$, there exists $C=C(K,\,L)>0$ such
that for all $\mathbf{s}\in\mathcal{S}$, 
\begin{equation}
\max_{\zeta\in\mathcal{N}(\mathbf{s})}\big|\mathbb{P}_{\zeta}^{\beta}[\tau_{\mathcal{A}}<\tau_{\mathcal{B}}]-\mathbb{P}_{\mathbf{s}}^{\beta}[\tau_{\mathcal{A}}<\tau_{\mathcal{B}}]\big|\le Ce^{-\beta}.\label{e_eqp}
\end{equation}
\end{lem}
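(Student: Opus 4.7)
The plan is to reduce the estimate, via the strong Markov property, to a single return-probability bound inside the valley $\mathcal{N}(\mathbf{s})$. Fix $\mathbf{s}\in\mathcal{S}$, disjoint non-empty $\mathcal{A},\mathcal{B}\subseteq\mathcal{S}$, and $\zeta\in\mathcal{N}(\mathbf{s})$. Applying the strong Markov property at $\tau_{*}:=\tau_{\mathbf{s}}\wedge\tau_{\mathcal{S}\setminus\{\mathbf{s}\}}$, together with a short case analysis depending on whether $\mathbf{s}$ belongs to $\mathcal{A}$, to $\mathcal{B}$, or to neither (using $0\le h_{\mathcal{A},\mathcal{B}}^{\beta}\le 1$), yields
\begin{equation*}
\bigl|h_{\mathcal{A},\mathcal{B}}^{\beta}(\zeta)-h_{\mathcal{A},\mathcal{B}}^{\beta}(\mathbf{s})\bigr|\le\mathbb{P}_{\zeta}^{\beta}[\tau_{\mathcal{S}\setminus\{\mathbf{s}\}}<\tau_{\mathbf{s}}],
\end{equation*}
so it remains to bound this ``escape-before-return'' probability by $C(K,L)e^{-\beta}$.

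By Theorem~\ref{t_Ebarrier}, any path joining $\mathbf{s}$ to a different ground state must visit energy level $\Gamma$, hence $\mathcal{S}\setminus\{\mathbf{s}\}\subseteq\mathcal{X}\setminus\mathcal{N}(\mathbf{s})$ and the probability above is at most $\mathbb{P}_{\zeta}^{\beta}[\tau_{\mathcal{N}(\mathbf{s})^{c}}<\tau_{\mathbf{s}}]$. The key energetic input is that every boundary edge $\{\sigma,\sigma'\}$ with $\sigma\in\mathcal{N}(\mathbf{s})$ and $\sigma'\notin\mathcal{N}(\mathbf{s})$ satisfies $H(\sigma)\le\Gamma-1$ and $H(\sigma')\ge\Gamma$, so by detailed balance its conductance $\mu_{\beta}(\sigma)c_{\beta}(\sigma,\sigma')=\mu_{\beta}(\sigma')$ is of order $e^{-\beta\Gamma}$. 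I would then expand $\mathbb{P}_{\zeta}^{\beta}[\tau_{\mathcal{N}(\mathbf{s})^{c}}<\tau_{\mathbf{s}}]$ via a last-exit decomposition through the boundary edges of the chain killed at $\{\mathbf{s}\}\cup\mathcal{N}(\mathbf{s})^{c}$, and control the resulting Green-function factor $G(\zeta,\sigma)$ using the reversibility identity $\mu_{\beta}(\zeta)G(\zeta,\sigma)=\mu_{\beta}(\sigma)G(\sigma,\zeta)$ together with the rough bound $G(\sigma,\zeta)\le\mathbb{E}_{\sigma}[\tau_{\mathbf{s}}]\le C(K,L)e^{\beta(\Gamma-1)}$ (valid inside the valley by the max-barrier estimate). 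The exponential cancellations leave exactly one surplus factor of $e^{-\beta}$.

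The main technical obstacle is extracting the sharp exponent $1$ rather than the $1/2$ that a naive Cauchy--Schwarz bound $|h(\zeta)-h(\mathbf{s})|^{2}\le R_{\mathrm{eff}}(\zeta,\mathbf{s})D_{\beta}(h)$ would yield (interior conductances inside $\mathcal{N}(\mathbf{s})$ are of order $e^{-\beta(\Gamma-1)}$ and the Dirichlet energy of $h$ is of order $e^{-\beta\Gamma}$, combining to only $e^{-\beta/2}$). The remaining half-factor must be extracted from the fact that the \emph{full} $e^{-\beta}$ energy penalty is concentrated on the \emph{single} step across $\partial\mathcal{N}(\mathbf{s})$, which is exactly what the last-exit decomposition isolates. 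The argument is structurally identical to the proof of \cite[Lemma~10.4]{Kim-Seo Ising-Potts} and carries over to the Blume--Capel setting since it relies only on reversibility, the barrier bound of Theorem~\ref{t_Ebarrier}, and the valley definition of $\mathcal{N}(\mathbf{s})$.
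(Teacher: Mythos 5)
Your reduction via the strong Markov property at $\tau_{*}=\tau_{\mathbf{s}}\wedge\tau_{\mathcal{S}\setminus\{\mathbf{s}\}}$ to bounding $\mathbb{P}^{\beta}_{\zeta}[\tau_{\mathcal{N}(\mathbf{s})^{c}}<\tau_{\mathbf{s}}]$ is correct, as is the identification of the boundary-conductance scale $e^{-\beta\Gamma}$ and the observation that naive Cauchy--Schwarz loses half the exponent. The paper itself only refers to \cite[Lemmas 10.4, 16.5]{Kim-Seo Ising-Potts} for the proof, and your last-exit decomposition is the right skeleton. However, the Green-function estimate you propose has a genuine gap that makes the exponents fail to close.

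Concretely, write $\mathcal{K}=\{\mathbf{s}\}\cup\mathcal{N}(\mathbf{s})^{c}$ and
\begin{equation*}
\mathbb{P}^{\beta}_{\zeta}[\tau_{\mathcal{N}(\mathbf{s})^{c}}<\tau_{\mathbf{s}}]
=\sum_{\sigma}\sum_{\sigma'}G_{\mathcal{K}}(\zeta,\sigma)\,c_{\beta}(\sigma,\sigma'),
\end{equation*}
the inner sum over boundary edges $\sigma\in\mathcal{N}(\mathbf{s})\setminus\{\mathbf{s}\}$, $\sigma'\notin\mathcal{N}(\mathbf{s})$. Plugging in your chain $G_{\mathcal{K}}(\zeta,\sigma)=\frac{\mu_{\beta}(\sigma)}{\mu_{\beta}(\zeta)}G_{\mathcal{K}}(\sigma,\zeta)$ and $G_{\mathcal{K}}(\sigma,\zeta)\le\mathbb{E}_{\sigma}[\tau_{\mathbf{s}}]\le C e^{\beta(\Gamma-1)}$ gives, for a single boundary edge,
\begin{equation*}
G_{\mathcal{K}}(\zeta,\sigma)c_{\beta}(\sigma,\sigma')
\le \frac{\mu_{\beta}(\sigma)c_{\beta}(\sigma,\sigma')}{\mu_{\beta}(\zeta)}\cdot Ce^{\beta(\Gamma-1)}
\le \frac{C'e^{-\beta\Gamma}}{\mu_{\beta}(\zeta)}\cdot e^{\beta(\Gamma-1)}
= C'Z_{\beta}\,e^{\beta(H(\zeta)-1)} .
\end{equation*}
This is $O(e^{-\beta})$ only when $H(\zeta)=0$, i.e.\ $\zeta=\mathbf{s}$, but it diverges as $\beta\to\infty$ for any $\zeta\in\mathcal{N}(\mathbf{s})$ with $H(\zeta)\ge 2$, which do occur (indeed $H$ ranges up to $\Gamma-1$ on $\mathcal{N}(\mathbf{s})$). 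The problem is that $\mathbb{E}_{\sigma}[\tau_{\mathbf{s}}]$ (or even $\mathbb{E}_{\sigma}[\tau_{\mathcal{K}}]$) completely discards the fact that the excursion visits the \emph{specific} state $\zeta$ only rarely when $H(\zeta)$ is large, and after you undo reversibility this lost factor of $\mu_{\beta}(\zeta)$ reappears in the denominator.

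The fix, which your approach is very close to, is to replace the time bound with the sharper pointwise bound $G_{\mathcal{K}}(\sigma,\zeta)\le G_{\mathcal{K}}(\zeta,\zeta)=\mu_{\beta}(\zeta)/\mathrm{Cap}_{\beta}(\zeta,\mathcal{K})$, which is exact at $\sigma=\zeta$. Then the $\mu_{\beta}(\zeta)$'s cancel and
\begin{equation*}
\mathbb{P}^{\beta}_{\zeta}[\tau_{\mathcal{N}(\mathbf{s})^{c}}<\tau_{\mathbf{s}}]
\le\frac{1}{\mathrm{Cap}_{\beta}(\zeta,\mathcal{K})}\sum_{\sigma}\sum_{\sigma'}\mu_{\beta}(\sigma)c_{\beta}(\sigma,\sigma')
\le\frac{C|\mathcal{X}|^{2}\,e^{-\beta\Gamma}}{\mathrm{Cap}_{\beta}(\zeta,\mathbf{s})} .
\end{equation*}
Finally bound $\mathrm{Cap}_{\beta}(\zeta,\mathbf{s})\ge c(K,L)\,e^{-\beta(\Gamma-1)}$ from below by considering the $(\Gamma-1)$-path inside $\mathcal{N}(\mathbf{s})$ from $\zeta$ to $\mathbf{s}$ (every edge on it has conductance at least $Z_{\beta}^{-1}e^{-\beta(\Gamma-1)}$, so the series resistance along it is at most $|\mathcal{X}|Z_{\beta}e^{\beta(\Gamma-1)}$). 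This yields the desired $Ce^{-\beta}$, uniformly over $\zeta\in\mathcal{N}(\mathbf{s})$ and over the finitely many choices of $\mathcal{A},\mathcal{B}\subseteq\mathcal{S}$ and $\mathbf{s}\in\mathcal{S}$.

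Two minor additional remarks. First, even if you retain the time bound, you should bound $G_{\mathcal{K}}(\sigma,\zeta)$ by $\mathbb{E}_{\sigma}[\tau_{\mathcal{K}}]$ rather than $\mathbb{E}_{\sigma}[\tau_{\mathbf{s}}]$; the latter a priori involves excursions to other ground states, whose expected return time is of order $e^{\beta\Gamma}$, and proving it is in fact $O(e^{\beta(\Gamma-1)})$ quietly uses the very escape-probability bound you are trying to establish. Second, your step $|h^{\beta}_{\mathcal{A},\mathcal{B}}(\zeta)-h^{\beta}_{\mathcal{A},\mathcal{B}}(\mathbf{s})|\le\mathbb{P}^{\beta}_{\zeta}[\tau_{\mathcal{S}\setminus\{\mathbf{s}\}}<\tau_{\mathbf{s}}]$ and the inclusion $\mathcal{S}\setminus\{\mathbf{s}\}\subseteq\mathcal{N}(\mathbf{s})^{c}$ are both correct and need no changes.
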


Then, we provide a proof of Theorem \ref{t_eqp}.
\begin{proof}[Proof of Theorem \ref{t_eqp}]
 Part (2) is obvious from the model symmetry. Thus, to conclude the
proof, we prove part (1). We first prove that
\begin{equation}
\lim_{\beta\rightarrow\infty}\mathbb{P}_{\mathbf{-1}}^{\beta}[\tau_{\mathcal{N}(\mathbf{0})}<\tau_{\mathbf{+1}}]=1.\label{e_eqppf1}
\end{equation}
We denote by $\tau^{*}$ the hitting time of the set $\{\sigma\in\mathcal{X}:H(\sigma)\ge\Gamma+1\}$.
Then, \cite[Theorem 3.2]{N-Z-B} implies that 
\[
\mathbb{P}_{\mathbf{-1}}^{\beta}[\tau^{*}>e^{\beta(\Gamma+1/2)}]=1-o(1).
\]
Hence, by part (1) of Theorem \ref{t_LDTresults} with $\epsilon=1/2$,
we have
\begin{align*}
\mathbb{P}_{\mathbf{-1}}^{\beta}[\tau_{\mathbf{+1}}<\tau^{*}]=1-\mathbb{P}_{\mathbf{-1}}^{\beta}[\tau_{\mathbf{+1}}\ge\tau^{*}] & =1-o(1)-\mathbb{P}_{\mathbf{-1}}^{\beta}[\tau_{\mathbf{+1}}\ge\tau^{*}>e^{\beta(\Gamma+1/2)}]\\
 & \ge1-o(1)-\mathbb{P}_{\mathbf{-1}}^{\beta}[\tau_{\mathbf{+1}}>e^{\beta(\Gamma+1/2)}]=1-o(1).
\end{align*}
Therefore, it suffices to prove that a $\Gamma$-path from $\mathbf{-1}$
to $\mathbf{+1}$ must visit $\mathcal{N}(\mathbf{0})$. To this end,
we fix a $\Gamma$-path $(\omega_{n})_{n=0}^{N}$ with $\omega_{0}=\mathbf{-1}\in\mathcal{E}^{-1}$
and $\omega_{N}=\mathbf{+1}\in\mathcal{E}^{+1}$. Then, by Proposition
\ref{p_typ} and Remark \ref{r_typedge}, starting from $\mathbf{-1}\in\mathcal{E}^{-1}$,
this path must successively visit $\mathcal{E}^{-1}\cap\mathcal{B}^{-1,0}=\mathcal{R}_{2}^{-1,0}$,
$\mathcal{B}^{-1,0}$, $\mathcal{B}^{-1,0}\cap\mathcal{E}^{0}=\mathcal{R}_{2}^{0,-1}$,
$\mathcal{E}^{0}$, $\mathcal{E}^{0}\cap\mathcal{B}^{0,+1}=\mathcal{R}_{2}^{0,+1}$,
$\mathcal{B}^{0,+1}$, and $\mathcal{B}^{0,+1}\cap\mathcal{E}^{+1}=\mathcal{R}_{2}^{+1,0}$
to finally arrive at $\mathbf{+1}\in\mathcal{E}^{+1}$. Thus, the
following time is well defined:
\[
n_{0}=\max\{n:\omega_{n}\in\mathcal{R}_{2}^{0,-1}\}.
\]
Then, by the definition of gateway configurations, we have $\omega_{n_{0}+1}\in\mathcal{Z}^{0,-1}$.
Then, by defining
\[
n_{1}=\min\{n>n_{0}:\omega_{n}\notin\mathcal{G}^{0,-1}\},
\]
we have $\omega_{n_{1}}\in\mathcal{N}(\mathbf{0})$ by Lemma \ref{l_gate},
which concludes the proof of \eqref{e_eqppf1}.

Moreover, Lemma \ref{l_eqp} with $\mathcal{A}=\{\mathbf{0}\}$, $\mathcal{B}=\{\mathbf{+1}\}$,
and $\mathbf{s}=\mathbf{0}$ implies that
\[
\max_{\sigma\in\mathcal{N}(\mathbf{0})}\mathbb{P}_{\sigma}^{\beta}[\tau_{\mathbf{0}}<\tau_{\mathbf{+1}}]=1-o(1).
\]
This is equivalent to
\begin{equation}
\lim_{\beta\rightarrow\infty}\max_{\sigma\in\mathcal{N}(\mathbf{0})}\mathbb{P}_{\sigma}^{\beta}[\tau_{\mathbf{0}}<\tau_{\mathbf{+1}}]=1.\label{e_eqppf2}
\end{equation}
Therefore, we conclude the proof of the first assertion of part (1)
with \eqref{e_eqppf1} and \eqref{e_eqppf2} by the casual argument
using the strong Markov property. The second assertion follows identically.
\end{proof}

\section{\label{sec6}Edge Typical Configurations}

In this section, we focus on the edge typical configurations defined
in Definition \ref{d_typ}, which have much more complex geometry
than the bulk typical configurations. This section is an analogue
of \cite[Section 7.3]{Kim-Seo Ising-Potts}, but \textit{we provide
here a much more detailed and quantitative analysis on the behavior
of the edge typical configurations.}

\begin{figure}
\includegraphics[width=13cm]{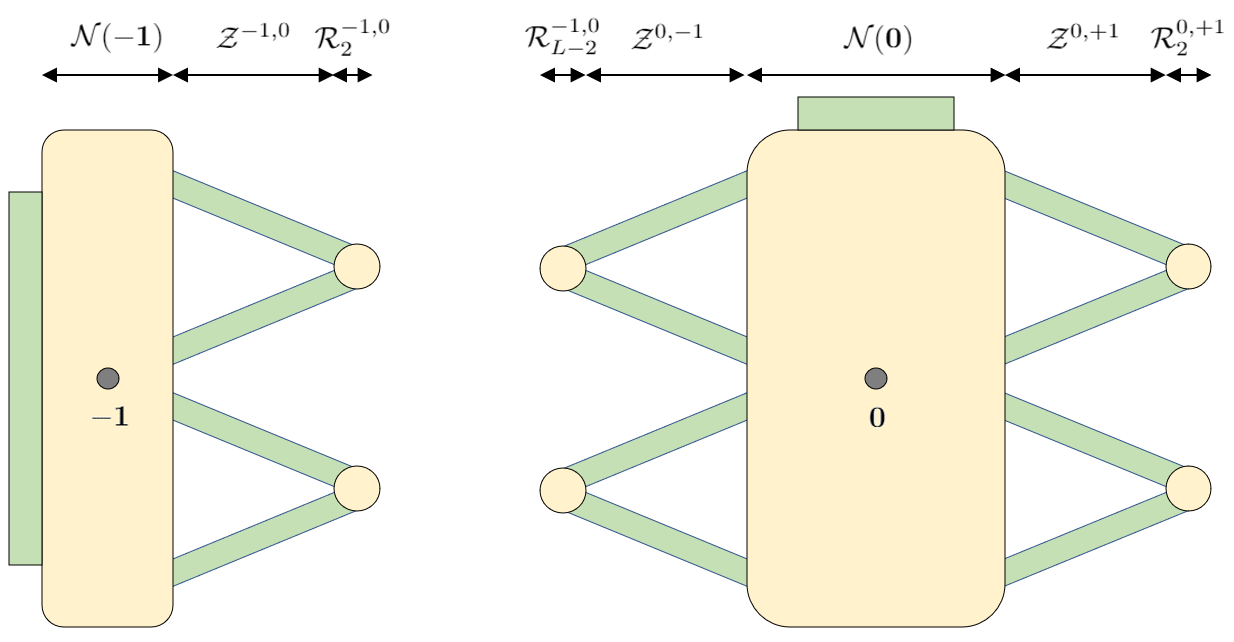}\caption{\label{fig6.1}Edge typical configurations when $K<L$.
(Left) structure of $\mathcal{E}^{-1}$. (Right) structure of $\mathcal{E}^{0}$.}
\end{figure}

\subsection{Projected graph}

We consistently refer to Figure \ref{fig6.1} for an illustration
of the notions defined in this subsection. For each spin $a$, we
decompose $\mathcal{E}^{a}=\mathcal{I}^{a}\cup\mathcal{O}^{a}$ where
\[
\mathcal{O}^{a}=\{\sigma\in\mathcal{E}^{a}:H(\sigma)=\Gamma\}\;\;\;\;\text{and}\;\;\;\;\mathcal{I}^{a}=\{\sigma\in\mathcal{E}^{a}:H(\sigma)<\Gamma\}.
\]
By Proposition \ref{p_E<Gamma}, we notice that 
\begin{equation}
\mathcal{I}^{a}=\mathcal{N}(\mathbf{a})\cup\Big[\bigcup_{b:\,(a,b)\text{ is good}}\mathcal{R}_{2}^{a,b}\Big].\label{e_Ia}
\end{equation}
We further define 
\begin{equation}
\mathcal{I}_{\mathfrak{0}}^{a}=\{\mathbf{a}\}\cup\Big[\bigcup_{b:\,(a,b)\text{ is good}}\mathcal{R}_{2}^{a,b}\Big],\label{e_Iarep}
\end{equation}
so that each $\sigma\in\mathcal{I}^{a}$ satisfies $\sigma\in\mathcal{N}(\zeta)$
for exactly one $\zeta\in\mathcal{I}_{\mathfrak{0}}^{a}$. Hence,
we get the following alternative decomposition of $\mathcal{E}^{a}$:
\begin{equation}
\mathcal{E}^{a}=\mathcal{O}^{a}\cup\Big[\bigcup_{\zeta\in\mathcal{I}_{\mathfrak{0}}^{a}}\mathcal{N}(\zeta)\Big].\label{e_Eadec}
\end{equation}
We chose the set of representatives $\mathcal{I}_{\mathfrak{0}}^{a}$
because configurations belonging to the same $\mathcal{N}$-neighborhood
are not distinguished in the study of metastability, in the sense
of Lemma \ref{l_eqp}.
\begin{rem}
\label{r_Ia}We remark on the display \eqref{e_Ia}. In details, we
have
\[
\mathcal{I}^{-1}=\mathcal{N}(\mathbf{-1})\cup\mathcal{R}_{2}^{-1,0}\;\;\;\;\text{and}\;\;\;\;\mathcal{I}^{+1}=\mathcal{N}(\mathbf{+1})\cup\mathcal{R}_{L-2}^{0,+1},
\]
whereas
\[
\mathcal{I}^{0}=\mathcal{N}(\mathbf{0})\cup\mathcal{R}_{L-2}^{-1,0}\cup\mathcal{R}_{2}^{0,+1}.
\]
Hence, the structures of $\mathcal{E}^{-1}$ and $\mathcal{E}^{+1}$
are exactly the same, but they differ from the structure of $\mathcal{E}^{0}$.
Figure \ref{fig6.1} illustrates this difference.
\end{rem}

Now, we define a graph structure on $\mathcal{O}^{a}\cup\mathcal{I}_{\mathfrak{0}}^{a}$.
\begin{defn}
\label{d_Ga}We fix spin $a$ and introduce a graph structure and
a Markov chain on $\mathcal{O}^{a}\cup\mathcal{I}_{\mathfrak{0}}^{a}$.
\begin{itemize}
\item \textbf{(Graph) }Vertex set $\mathscr{V}^{a}$ is defined by
\begin{equation}
\mathscr{V}^{a}=\mathcal{O}^{a}\cup\mathcal{I}_{\mathfrak{0}}^{a}.\label{e_Va}
\end{equation}
Then, the edge set $E(\mathscr{V}^{a})$ is defined as follows: $\{\sigma,\,\sigma'\}\in E(\mathscr{V}^{a})$
if and only if either $\sigma,\,\sigma'\in\mathcal{O}^{a}$ and $\sigma\sim\sigma'$,
or $\sigma\in\mathcal{O}^{a}$, $\sigma'\in\mathcal{I}_{\mathfrak{0}}^{a}$,
and $\sigma\sim\zeta$ for some $\zeta\in\mathcal{N}(\sigma')$.
\item \textbf{(Markov chain)} We define a transition rate $r^{\mathbf{a}}:\mathscr{V}^{a}\times\mathscr{V}^{a}\rightarrow[0,\,\infty)$
as follows: If $\{\sigma,\,\sigma'\}\notin E(\mathscr{V}^{a})$, then
$r^{\mathbf{a}}(\sigma,\,\sigma')=0$. If $\{\sigma,\,\sigma'\}\in E(\mathscr{V}^{a})$,
then 
\begin{equation}
r^{\mathbf{a}}(\sigma,\,\sigma')=\begin{cases}
1 & \text{if }\sigma,\,\sigma'\in\mathcal{O}^{a},\\
|\{\zeta\in\mathcal{N}(\sigma):\zeta\sim\sigma'\}| & \text{if }\sigma\in\mathcal{I}_{\mathfrak{0}}^{a},\;\sigma'\in\mathcal{O}^{a},\\
|\{\zeta\in\mathcal{N}(\sigma'):\zeta\sim\sigma\}| & \text{if }\sigma\in\mathcal{O}^{a},\;\sigma'\in\mathcal{I}_{\mathfrak{0}}^{a}.
\end{cases}\label{e_ra}
\end{equation}
Then, we define $\{Z^{a}(t)\}_{t\ge0}$ as the continuous-time Markov
chain on $\mathscr{V}^{a}$ with transition rate $r^{\mathbf{a}}(\cdot,\,\cdot)$.
As the rate is symmetric, the Markov chain $Z^{a}(\cdot)$ is reversible
with respect to its invariant distribution, which is the uniform distribution
on $\mathscr{V}^{a}$.
\end{itemize}
\end{defn}

Next, we prove that the process $Z^{a}(\cdot)$ approximates the Metropolis
dynamics on the edge typical configurations.
\begin{prop}
\label{p_proj}For each spin $a$, define a projection map $\Pi^{\mathbf{a}}:\mathcal{E}^{a}\rightarrow\mathscr{V}^{a}$
by
\[
\Pi^{\mathbf{a}}(\sigma)=\begin{cases}
\sigma & \text{if }\sigma\in\mathcal{O}^{a},\\
\zeta & \text{if }\sigma\in\mathcal{N}(\zeta)\text{ for some }\zeta\in\mathcal{I}_{\mathfrak{0}}^{a}.
\end{cases}
\]
Then, there exists a constant $C=C(K,\,L)>0$ such that
\begin{enumerate}
\item for $\sigma_{1},\,\sigma_{2}\in\mathcal{O}^{a}$, we have
\[
\Big|\frac{1}{3}e^{-\beta\Gamma}r^{\mathbf{a}}(\Pi^{\mathbf{a}}(\sigma_{1}),\,\Pi^{\mathbf{a}}(\sigma_{2}))-\mu_{\beta}(\sigma_{1})c_{\beta}(\sigma_{1},\,\sigma_{2})\Big|\le Ce^{-\beta(\Gamma+1)},
\]
\item for $\sigma_{1}\in\mathcal{O}^{a}$ and $\sigma_{2}\in\mathcal{I}_{\mathfrak{0}}^{a}$,
we have
\[
\Big|\frac{1}{3}e^{-\beta\Gamma}r^{\mathbf{a}}(\Pi^{\mathbf{a}}(\sigma_{1}),\,\Pi^{\mathbf{a}}(\sigma_{2}))-\sum_{\zeta\in\mathcal{N}(\sigma_{2})}\mu_{\beta}(\sigma_{1})c_{\beta}(\sigma_{1},\,\zeta)\Big|\le Ce^{-\beta(\Gamma+1)}.
\]
\end{enumerate}
\end{prop}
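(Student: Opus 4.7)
The strategy is a direct computation in which both sides of the inequalities are written out explicitly, with the error coming solely from the approximation $1/Z_\beta = 1/3 + O(e^{-2\beta})$ guaranteed by \eqref{e_Zmu}. The essential input is to pin down the energies of all configurations involved: every $\sigma\in\mathcal{O}^a$ has $H(\sigma)=\Gamma$ by definition, while every $\zeta\in\mathcal{N}(\sigma_2)$ with $\sigma_2\in\mathcal{I}_{\mathfrak{0}}^a$ satisfies $H(\zeta)\le\Gamma-1$, because $\mathcal{N}(\sigma_2)$ is a $(\Gamma-1)$-neighborhood (Definition \ref{d_nbd}). Using \eqref{e_cbeta} and \eqref{e_mu} together with these facts, every Metropolis rate that appears will be either $0$ or $e^{-\beta\Gamma}/Z_\beta$, which makes the comparison with the integer weights in \eqref{e_ra} transparent.

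For part (1), I would first observe that if $\{\sigma_1,\sigma_2\}\notin E(\mathscr{V}^a)$, then by the definition of $E(\mathscr{V}^a)$ we have $\sigma_1\not\sim\sigma_2$, so both sides vanish and the inequality is trivial. Otherwise $\sigma_1\sim\sigma_2$ with $H(\sigma_1)=H(\sigma_2)=\Gamma$, hence $c_\beta(\sigma_1,\sigma_2)=1$ and $\mu_\beta(\sigma_1)c_\beta(\sigma_1,\sigma_2)=e^{-\beta\Gamma}/Z_\beta$, while $r^{\mathbf{a}}(\sigma_1,\sigma_2)=1$. The difference therefore reduces to
\[
\Bigl|\tfrac{1}{3}-\tfrac{1}{Z_\beta}\Bigr|e^{-\beta\Gamma}=O(e^{-\beta(\Gamma+2)}),
\]
which is bounded by $Ce^{-\beta(\Gamma+1)}$ for some $C=C(K,L)>0$, as required.

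For part (2), with $\sigma_1\in\mathcal{O}^a$ and $\sigma_2\in\mathcal{I}_{\mathfrak{0}}^a$, I would rewrite the sum as
\[
\sum_{\zeta\in\mathcal{N}(\sigma_2)}\mu_\beta(\sigma_1)c_\beta(\sigma_1,\zeta)=\mu_\beta(\sigma_1)\,\bigl|\{\zeta\in\mathcal{N}(\sigma_2):\zeta\sim\sigma_1\}\bigr|,
\]
using that for every $\zeta\in\mathcal{N}(\sigma_2)$ adjacent to $\sigma_1$ we have $H(\zeta)\le\Gamma-1<H(\sigma_1)$, so $c_\beta(\sigma_1,\zeta)=1$. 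By definition \eqref{e_ra}, the right-hand integer factor equals exactly $r^{\mathbf{a}}(\Pi^{\mathbf{a}}(\sigma_1),\Pi^{\mathbf{a}}(\sigma_2))=r^{\mathbf{a}}(\sigma_1,\sigma_2)$ (and both the count and the transition rate vanish simultaneously when the projected edge is absent). Hence the difference between the two quantities is $|\tfrac{1}{3}-\tfrac{1}{Z_\beta}|\,e^{-\beta\Gamma}\,r^{\mathbf{a}}(\sigma_1,\sigma_2)$, and since $r^{\mathbf{a}}(\sigma_1,\sigma_2)\le|\mathcal{N}(\sigma_2)|\le C(K,L)$, the bound $Ce^{-\beta(\Gamma+1)}$ again follows from \eqref{e_Zmu}.

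There is no genuine obstacle in the argument: once the energy levels of $\mathcal{O}^a$ and of $\mathcal{N}(\sigma_2)$ are correctly identified, the statement is essentially a bookkeeping exercise, with the only quantitative input being the exponentially small discrepancy between $1/Z_\beta$ and $1/3$. The one point to handle with care is the case distinction according to whether the projected edge lies in $E(\mathscr{V}^a)$, and the verification that in the degenerate cases the integer count in \eqref{e_ra} matches the set of actually contributing neighbors $\zeta$ in the Metropolis sum; this is immediate from the definition of $E(\mathscr{V}^a)$.
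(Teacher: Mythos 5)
Your proof is correct, and it is exactly the direct computation one would expect (the paper itself omits the argument and refers to \cite[Proposition 7.7]{Kim-Seo Ising-Potts}). The key identifications you make — $H(\sigma)=\Gamma$ on $\mathcal{O}^a$ so that $\mu_\beta(\sigma)=e^{-\beta\Gamma}/Z_\beta$, $H(\zeta)\le\Gamma-1$ on $\mathcal{N}(\sigma_2)$ so that $c_\beta(\sigma_1,\zeta)=1$ whenever $\sigma_1\sim\zeta$, and the matching of the counting factor with the weight $r^{\mathbf a}$ in \eqref{e_ra} (with both vanishing when the projected edge is absent, by the definition of $E(\mathscr{V}^a)$) — are precisely what reduce both estimates to $|1/3-1/Z_\beta|\,e^{-\beta\Gamma}\cdot O(1)=O(e^{-\beta(\Gamma+2)})$, which is stronger than the stated $O(e^{-\beta(\Gamma+1)})$ bound. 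No gaps.
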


\begin{proof}
As the proof is identical to that of \cite[Proposition 7.7]{Kim-Seo Ising-Potts},
we omit the details.
\end{proof}

\subsection{Approximation to auxiliary process}

In this subsection, we prove that the auxiliary process analyzed in
Section \ref{secA.1} successfully represents the Markov chain $Z^{a}(\cdot)$.
First, we handle the case of $K<L$.

\begin{figure}
\includegraphics[width=13cm]{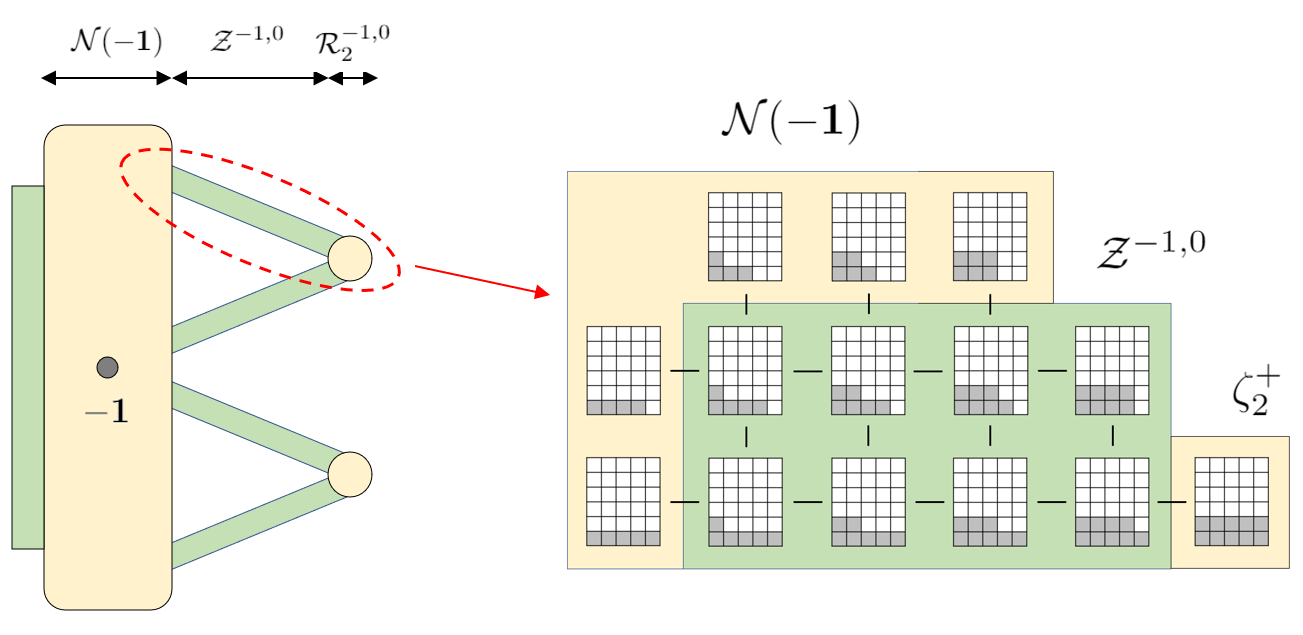}\caption{\label{fig6.2}Visualization of Lemma \ref{l_auxK<L} for $(K,\,L)=(5,\,6)$.}
\end{figure}

\begin{lem}
\label{l_auxK<L}Suppose that $K<L$. Fix a good pair $(a,\,b)$ and
recall the projected auxiliary process in Section \ref{secA.2}. Then,
there exists a surjective mapping $\Phi^{a,b}:\mathscr{V}^{a}\rightarrow V_{K}$
which satisfies:
\begin{enumerate}
\item for each $\{\sigma,\,\sigma'\}\in E(\mathscr{V}^{a})$ with $\{\sigma,\,\sigma'\}\cap\mathcal{Z}^{a,b}=\emptyset$,
we have $\Phi^{a,b}(\sigma)=\Phi^{a,b}(\sigma')$,
\item for each $\{\sigma,\,\sigma'\}\in E(\mathscr{V}^{a})$ with $\{\sigma,\,\sigma'\}\cap\mathcal{Z}^{a,b}\ne\emptyset$,
we have $\{\Phi^{a,b}(\sigma),\,\Phi^{a,b}(\sigma')\}\in E(V_{K})$
and $r^{\mathbf{a}}(\sigma,\,\sigma')=r_{K}(\Phi^{a,b}(\sigma),\,\Phi^{a,b}(\sigma'))$,
\item for each $\{x,\,y\}\in E(V_{K})$, there exist exactly \textbf{four}
edges $\{\sigma,\,\sigma'\}\in E(\mathscr{V}^{a})$ such that $\{\Phi^{a,b}(\sigma),\,\Phi^{a,b}(\sigma')\}=\{x,\,y\}$.
\end{enumerate}
\end{lem}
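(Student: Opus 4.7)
The plan is to construct $\Phi^{a,b}$ by an explicit geometric classification of the configurations in $\mathscr{V}^a$. Every $\sigma \in \mathscr{V}^a$ with its droplet of $b$-spins sitting near $\mathbf{a}$ can be recorded as a pair (geometric type, position type), where the position type captures which of the four corners of the open-boundary rectangle $\Lambda$ the droplet is attached to, and the geometric type captures the internal shape of the droplet. The key idea is that $\Phi^{a,b}$ will \emph{forget} the position type and retain only the geometric type, and the latter is exactly the information recorded by the vertex set $V_K$ of the projected auxiliary process.

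First, I would analyze the structure of $\mathcal{Z}^{a,b}$ using its defining property \eqref{e_Zab}: these are saddle-level configurations (energy $\Gamma$) reachable from $\mathcal{R}_2^{a,b}$ via paths of configurations of energy $\Gamma$, avoiding $\mathcal{B}_\Gamma^{a,b}$. Starting from one of the two elements of $\mathcal{R}_2^{a,b}$ (two full rows of $b$ at the top or bottom of $\Lambda$), the admissible shapes that preserve $H = \Gamma = K+1$ form a very restricted family: by the perimeter bookkeeping at energy $K+1$ (as in the $\zeta_{v,h}^{\pm\pm}$ computation giving $(K-h)+h+1$), the droplet must be either a full row with a protuberance on one end, or a row with a missing corner, and so on. A careful enumeration shows these shapes correspond bijectively, up to a four-fold positional symmetry (top-left, top-right, bottom-left, bottom-right), to the vertices of $V_K \setminus \{\text{root}\}$.

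Next, I would \textbf{define} $\Phi^{a,b}$ by sending every $\sigma \in \mathscr{V}^a \setminus \mathcal{Z}^{a,b}$ to the root vertex of $V_K$ (to be identified with the auxiliary state representing $\mathbf{a}$ and its small neighborhood), and sending each $\sigma \in \mathcal{Z}^{a,b}$ to the vertex of $V_K$ determined by the geometric type of its droplet. Property (1) is then verified by observing that for any edge $\{\sigma,\sigma'\}$ disjoint from $\mathcal{Z}^{a,b}$, both endpoints consist of configurations whose droplets have not yet stabilized into one of the four corner-branches, so both map to the root. For property (2), one must check that every edge in $E(\mathscr{V}^a)$ incident to $\mathcal{Z}^{a,b}$ corresponds via $\Phi^{a,b}$ to an edge of $V_K$ and that the rates match: this follows by inspecting \eqref{e_ra} and comparing with the transition rate $r_K$ of the auxiliary process, using that a spin flip at the boundary of the droplet translates directly into a transition between adjacent geometric types. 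For property (3), the factor $4$ arises because each edge in $V_K$ lifts to exactly one edge in each of the four corner branches of $\mathcal{Z}^{a,b}$, all of which are glued at the root via the definition of $\Phi^{a,b}$.

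The main obstacle is the combinatorial enumeration in the first step: showing rigorously that the subgraph of $\mathscr{V}^a$ induced on $\mathcal{Z}^{a,b}$ decomposes precisely into four isomorphic copies of $V_K \setminus \{\text{root}\}$, and that the rate-preserving identification with $r_K$ holds. The delicacy is twofold. First, one must argue that no additional saddle shapes arise beyond those on the ladder, which requires showing that any deformation of a canonical row-with-protuberance that preserves $H = \Gamma$ and avoids $\mathcal{B}_\Gamma^{a,b}$ must stay within this short list. Second, one must verify that the root vertex correctly accounts for all transitions emanating from $\mathcal{I}_{\mathfrak{0}}^a \setminus \mathcal{R}_2^{a,b}$ via the rate formula \eqref{e_ra}, i.e., that multiplicities arising from $|\{\zeta \in \mathcal{N}(\sigma) : \zeta \sim \sigma'\}|$ agree with those recorded by $r_K$ at its root. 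These checks are routine but case-heavy; once they are in place, the three claimed properties fall out immediately.
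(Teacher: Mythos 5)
The overall strategy—identifying $\mathcal{Z}^{a,b}$ as four parallel branches, each isomorphic to a subgraph of $V_K$, and gluing them at the boundary—is the same as the paper's, so you are on the right track; but your explicit definition of $\Phi^{a,b}$ contains a genuine error that breaks surjectivity and properties (2) and (3).

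You send \emph{all} of $\mathscr{V}^a\setminus\mathcal{Z}^{a,b}$ to a single ``root'' vertex, which by your parenthetical is $\mathfrak{d}$. But $V_K$ has \emph{two} distinguished boundary vertices, namely $0=(0,0)$ and $\mathfrak{d}$, and both must lie in the image. The configurations of $\mathcal{R}_2^{a,b}$ are not in $\mathcal{Z}^{a,b}$ (they have energy $\Gamma-1$, not $\Gamma$), so under your assignment they collapse onto $\mathfrak{d}$ along with $\mathbf{a}$; but they sit at the \emph{opposite} end of the auxiliary graph. The paper's $\Phi^{a,b}$ makes a three-way split: $\mathcal{R}_2^{a,b}\to 0$, $\mathcal{Z}^{a,b}\to V_K\setminus\{0,\mathfrak{d}\}$ (four-to-one), and the remainder of $\mathscr{V}^a$ (including $\mathbf{a}$ and the rest of $\mathcal{O}^a$) $\to\mathfrak{d}$. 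Your version either leaves $0$ uncovered, or, if you relabel so that some energy-$\Gamma$ configuration lands on $0$, misplaces the preimages of the unique edge of $V_K$ incident to $0$: the correct preimages are the four pairs $\{\zeta_2^{\pm},\zeta_{1,K-1}^{\pm\pm}\}$ linking $\mathcal{R}_2^{a,b}$ to $\mathcal{Z}^{a,b}$, and your map sends the $\mathcal{R}_2^{a,b}$-endpoint of each to $\mathfrak{d}$, producing pairs $\{\mathfrak{d},\cdot\}$ that are not the right edge of $V_K$. The mismatch is also visible in the cardinality you would have computed in the ``careful enumeration'' step: $|V_K\setminus\{\mathfrak{d}\}|=2K-1$, whereas each branch has $2(K-1)$ energy-$\Gamma$ configurations so $|\mathcal{Z}^{a,b}|=8K-8$; a strict four-to-one map from $\mathcal{Z}^{a,b}$ therefore covers only $2K-2$ vertices, i.e.\ $V_K\setminus\{0,\mathfrak{d}\}$. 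Splitting the non-$\mathcal{Z}^{a,b}$ configurations into $\mathcal{R}_2^{a,b}\to 0$ and the rest $\to\mathfrak{d}$ repairs the construction and recovers the paper's proof.
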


\begin{proof}
First, we assume that $(a,\,b)=(-1,\,0)$. We refer to Figure \ref{fig6.2}
to provide insight of the proof given here. We have $\mathcal{R}_{2}^{-1,0}=\{\zeta_{2}^{+},\,\zeta_{2}^{-}\}$
(cf. Definition \ref{d_can}). First, we focus on the landscape between
$\mathcal{N}(\mathbf{-1})$ and $\zeta_{2}^{+}$.

There are two possible $\sigma\in\mathcal{Z}^{-1,0}$ with $\sigma\sim\zeta_{2}^{+}$;
that is, $\zeta_{1,K-1}^{++}$ and $\zeta_{1,K-1}^{+-}$. we first
consider $\zeta_{1,K-1}^{++}$. All the possible paths from $\zeta_{1,K-1}^{++}$
to $\mathcal{N}(\mathbf{-1})$ are illustrated in Figure \ref{fig6.2}
(right) for the case of $K=5$ and $L=6$. Rigorously, we temporarily
denote by $\xi_{h}\in\mathcal{X}$, $h\in\llbracket1,\,K-1\rrbracket$
the configuration which has spins $0$ on
\[
\big[\llbracket1,\,K-1\rrbracket\times\{1\}\big]\cup\big[\llbracket1,\,h\rrbracket\times\{2\}\big]
\]
and spins $-1$ on the remainder. Then, we define $\Phi_{1}^{-1,0}:\{\mathbf{-1}\}\cup\bigcup_{h=1}^{K-1}\{\zeta_{1,h}^{++},\,\xi_{h}\}\cup\{\zeta_{2}^{+}\}\rightarrow V_{K}$
by $\Phi_{1}^{-1,0}(\zeta_{2}^{+})=0$, $\Phi_{1}^{-1,0}(\mathbf{-1})=\mathfrak{d}$,
and for $h\in\llbracket1,\,K-1\rrbracket$,
\[
\Phi_{1}^{-1,0}(\zeta_{1,h}^{++})=(0,\,K-h),\;\;\;\;\Phi_{1}^{-1,0}(\xi_{h})=(1,\,K-h).
\]
Then from Figures \ref{fig6.2} and \ref{figA.1}, it is straightforward
that $\Phi_{1}^{-1,0}$ is bijective and that it preserves the edge
structure.

If we consider $\zeta_{1,K-1}^{+-}$, we deduce as in the previous
case another separated landscape of configurations between $\mathbf{-1}$
and $\zeta_{1,K-1}^{+-}$. Then, we can define a similar bijective
function $\Phi_{2}^{-1,0}$ defined on the relevant configurations
to $V_{K}$ that preserves the edge structure.

Similarly, by examining the landscape between $\mathbf{-1}$ and $\zeta_{2}^{-}$,
we obtain two more bijective functions $\Phi_{3}^{-1,0}$ and $\Phi_{4}^{-1,0}$
that preserve the edge structure. Moreover, it is clear that the union
of $\mathrm{dom}\Phi_{i}^{-1,0}$, the domain of $\Phi_{i}^{-1,0}$,
for $i\in\llbracket1,\,4\rrbracket$ is indeed $\{\mathbf{-1}\}\cup\mathcal{Z}^{-1,0}\cup\mathcal{R}_{2}^{-1,0}$.

Now, we define $\Phi^{-1,0}:\mathscr{V}^{-1}\rightarrow V_{K}$ by
\[
\Phi^{-1,0}(\sigma)=\begin{cases}
\Phi_{i}^{-1,0}(\sigma) & \text{if }\sigma\in\mathrm{dom}\Phi_{i}^{-1,0},\\
\mathfrak{d} & \text{if }\sigma\notin\{\mathbf{-1}\}\cup\mathcal{Z}^{-1,0}\cup\mathcal{R}_{2}^{-1,0}.
\end{cases}
\]
In this way, the function $\Phi^{-1,0}$ is well defined because the
only possible intersection among $\mathrm{dom}\Phi_{i}^{-1,0}$, $i\in\llbracket1,\,4\rrbracket$
is $\{\mathbf{-1}\}$, on which $\Phi_{i}^{-1,0}$ is uniformly defined
as $\mathfrak{d}$.

Finally, we prove the assertions. $\Phi^{-1,0}$ is clearly surjective
as each $\Phi_{i}^{-1,0}$ is bijective. For part (1), if $\{\sigma,\,\sigma'\}\in E(\mathscr{V}^{-1})$
with $\{\sigma,\,\sigma'\}\cap\mathcal{Z}^{-1,0}=\emptyset$ then
we have $\sigma,\,\sigma'\in\mathcal{E}^{-1}\setminus(\mathcal{Z}^{-1,0}\cup\mathcal{R}_{2}^{-1,0})$,
so that $\Phi^{-1,0}(\sigma)=\Phi^{-1,0}(\sigma')=\mathfrak{d}$.
Part (2) is obvious from the bijective functions $\Phi_{i}^{-1,0}$.
As we have four such bijections, part (3) is now verified.

The other good pairs $(a,\,b)$ can be dealt with in a similar way;
thus, we do not repeat the tedious proof.
\end{proof}
Next, we deal with the case of $K=L$.
\begin{lem}
\label{l_auxK=00003DL}Suppose that $K=L$ and fix a good pair $(a,\,b)$.
Then, there exists a surjective mapping $\Phi^{a,b}:\mathscr{V}^{a}\rightarrow V_{K}$
which satisfies:
\begin{enumerate}
\item for each $\{\sigma,\,\sigma'\}\in E(\mathscr{V}^{a})$ with $\{\sigma,\,\sigma'\}\cap\mathcal{Z}^{a,b}=\emptyset$,
we have $\Phi^{a,b}(\sigma)=\Phi^{a,b}(\sigma')$,
\item for each $\{\sigma,\,\sigma'\}\in E(\mathscr{V}^{a})$ with $\{\sigma,\,\sigma'\}\cap\mathcal{Z}^{a,b}\ne\emptyset$,
we have $\{\Phi^{a,b}(\sigma),\,\Phi^{a,b}(\sigma')\}\in E(V_{K})$
and $r^{\mathbf{a}}(\sigma,\,\sigma')=r_{K}(\Phi^{a,b}(\sigma),\,\Phi^{a,b}(\sigma'))$,
\item for each $\{x,\,y\}\in E(V_{K})$, there exist exactly \textbf{eight}
edges $\{\sigma,\,\sigma'\}\in E(\mathscr{V}^{a})$ such that $\{\Phi^{a,b}(\sigma),\,\Phi^{a,b}(\sigma')\}=\{x,\,y\}$.
\end{enumerate}
\end{lem}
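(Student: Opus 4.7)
The plan is to mimic the construction of Lemma \ref{l_auxK<L} exactly, doubling the collection of local bijections by composing with the transpose operator $\Theta$ defined in Definition \ref{d_can}. Recall that when $K=L$, the canonical configurations between $\mathbf{a}$ and $\mathbf{b}$ are enlarged to $\Xi^{a,b}(\mathcal{C})\cup(\Theta\circ\Xi^{a,b})(\mathcal{C})$, and consequently the edge typical collections $\mathcal{O}^{a}$ and $\mathcal{I}_{\mathfrak{0}}^{a}$ acquire additional ``transposed'' members which are not present when $K<L$. The vertex set $\mathscr{V}^{a}$ thus has strictly more elements than in the $K<L$ case, but each transposed configuration lies in a landscape that is isomorphic (via $\Theta$) to the one analyzed in Lemma \ref{l_auxK<L}.

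First, I would run the proof of Lemma \ref{l_auxK<L} verbatim to obtain four local bijections $\Phi_{1}^{a,b},\ldots,\Phi_{4}^{a,b}$, each defined on the ``non-transposed'' portion of the landscape (emanating from $\zeta_{2}^{+}$ or $\zeta_{2}^{-}$, with the protuberance growing from either the upper-left or upper-right corner); each is a bijection onto $V_{K}$ that preserves the edge structure. Next, using that $\Theta$ is an involutive graph automorphism of $\mathcal{X}$ (it preserves $H$ and commutes with the adjacency relation $\sim$), and that $\Theta$ interchanges the four ``non-transposed'' local landscapes with four fresh ``transposed'' ones that are disjoint from them outside $\{\mathbf{a}\}$, I would set
\[
\Phi_{i+4}^{a,b}=\Phi_{i}^{a,b}\circ\Theta\quad\text{for }i\in\llbracket1,4\rrbracket,
\]
defined on $\Theta(\mathrm{dom}\,\Phi_{i}^{a,b})$. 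The union of all eight domains is exactly $\{\mathbf{a}\}\cup\mathcal{Z}^{a,b}\cup\mathcal{R}_{2}^{a,b}$ (using the enlarged definitions in the $K=L$ case), and the only intersection among them is the single point $\mathbf{a}$, which is uniformly sent to the cemetery $\mathfrak{d}$.

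I would then define
\[
\Phi^{a,b}(\sigma)=\begin{cases}
\Phi_{i}^{a,b}(\sigma) & \text{if }\sigma\in\mathrm{dom}\,\Phi_{i}^{a,b}\text{ for some }i\in\llbracket1,8\rrbracket,\\
\mathfrak{d} & \text{otherwise}.
\end{cases}
\]
Well-definedness follows from the intersection analysis above. Surjectivity onto $V_{K}$ is inherited from each $\Phi_{i}^{a,b}$. Part (1) is immediate: if $\{\sigma,\sigma'\}\in E(\mathscr{V}^{a})$ and neither configuration lies in $\mathcal{Z}^{a,b}$, then both lie in $\mathcal{E}^{a}\setminus(\mathcal{Z}^{a,b}\cup\mathcal{R}_{2}^{a,b})$, and hence both are mapped to $\mathfrak{d}$. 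Part (2) follows because each $\Phi_{i}^{a,b}$ preserves the edge structure and the transition rates coincide with $r_{K}$ by the explicit form of \eqref{e_ra} and the definition of the auxiliary process. Part (3) is the new numerical content: each edge $\{x,y\}\in E(V_{K})$ is hit by exactly one edge in each of the eight local domains, yielding precisely eight preimages.

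The main obstacle is the bookkeeping in part (3), i.e., verifying that the eight local domains are genuinely disjoint off $\{\mathbf{a}\}$ and that $\Theta$ acts freely on the four non-transposed landscapes, so that no edge in $E(V_{K})$ is counted with multiplicity from within a single $\Phi_{i}^{a,b}$ and no two of the eight bijections accidentally coincide on an edge adjacent to $\mathbf{a}$. This reduces to observing that $\Theta(\sigma)\ne\sigma$ for every $\sigma\in\mathcal{Z}^{a,b}\cup\mathcal{R}_{2}^{a,b}$ (which holds because such configurations, although supported on a $K\times K$ box, have an asymmetric cluster of spin $b$ consisting of one or two full rows plus a non-trivial protuberance, which is never invariant under transposition when $K\ge 5$) together with the analogous asymmetry argument for the four corner-choices used in the $K<L$ case. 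Once this freeness is established, the count $4\times 2=8$ is automatic, and the rest of the proof is essentially clerical.
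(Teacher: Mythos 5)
Your proof is correct and follows essentially the same approach as the paper's: the paper simply observes that when $K=L$, $\mathcal{R}_{2}^{a,b}$ gains the two transposed elements $\Theta(\zeta_{2}^{+}),\,\Theta(\zeta_{2}^{-})$, so the four local landscapes (and hence the four bijections onto $V_{K}$) of Lemma \ref{l_auxK<L} are doubled to eight, yielding the multiplicity eight in part (3). Your construction $\Phi_{i+4}^{a,b}=\Phi_{i}^{a,b}\circ\Theta$ and the freeness argument (that $\Theta$ fixes none of the relevant configurations for $K\ge5$) is a correct and slightly more explicit rendering of what the paper leaves as "the rest of the proof is identical."
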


\begin{proof}
First, we assume that $(a,\,b)=(-1,\,0)$. The only difference to
Lemma \ref{l_auxK<L} is that we now have $\mathcal{R}_{2}^{-1,0}=\{\zeta_{2}^{+},\,\zeta_{2}^{-},\,\Theta(\zeta_{2}^{+}),\,\Theta(\zeta_{2}^{-})\}$,
where $\Theta$ is the operator defined in Definition \ref{d_can}.
Thus, the corresponding number of edges are exactly doubled compared
to Lemma \ref{l_auxK<L}. The rest of the proof is identical.
\end{proof}

\section{\label{sec7}Construction of Fundamental Test Functions and Flows}

\subsection{Fundamental test objects}

In this subsection, we construct two fundamental test functions which
are the main ingredients of the actual test functions to approximate
the capacities via the Dirichlet principle (cf. Theorem \ref{t_DP}).
More specifically, we construct two real test functions, namely, $g^{-1,0}$
and $g^{+1,0}$ on $\mathcal{X}$. Concisely, $g^{-1,0}$ (resp. $g^{+1,0}$)
describes the dynamical transitions from $\mathbf{-1}$ (resp. $\mathbf{+1}$)
to $\mathbf{0}$ in the sense of equilibrium potentials. Then, we
define two fundamental test flows according to \eqref{e_fcnflow}.
\begin{defn}[Test function $g^{-1,0}$]
\label{d_tfcn-10} Here, we construct the function $g^{-1,0}:\mathcal{X}\rightarrow\mathbb{R}$
which describes the metastable transition from $\mathbf{-1}$ to $\mathbf{0}$.
For the construction, we recall \eqref{e_T} and define $g^{-1,0}$
on the members of $\mathcal{T}$ separately, and then define on $\mathcal{X}\setminus\mathcal{T}$.
\begin{itemize}
\item \textbf{$\mathcal{B}^{-1,0}$: }For $\sigma\in\mathcal{B}^{-1,0}$,
where the number of spins $0$ in $\sigma$ is $z\in\llbracket2K,\,K(L-2)\rrbracket$,
we define
\[
g^{-1,0}(\sigma)=\frac{1}{\kappa}\Big[\frac{K(L-2)-z}{K(L-4)}\mathfrak{b}+\mathfrak{e}\Big].
\]
\item \textbf{$\mathcal{E}^{-1}$:} We define (cf. Proposition \ref{p_proj},
Lemmas \ref{l_auxK<L} and \ref{l_auxK=00003DL})
\[
g^{-1,0}(\sigma)=1-\frac{\mathfrak{e}}{\kappa}\cdot h_{0,\mathfrak{d}}^{K}\big((\Phi^{-1,0}\circ\Pi^{\mathbf{-1}})(\sigma)\big).
\]
\item \textbf{$\mathcal{E}^{0}$:} We define
\[
g^{-1,0}(\sigma)=\frac{\mathfrak{e}}{\kappa}\cdot h_{0,\mathfrak{d}}^{K}\big((\Phi^{0,-1}\circ\Pi^{\mathbf{0}})(\sigma)\big).
\]
\item \textbf{$\mathcal{B}^{0,+1}\cup\mathcal{E}^{+1}\cup(\mathcal{X}\setminus\mathcal{T})$:}
We define $g^{-1,0}\equiv0$.
\end{itemize}
\end{defn}

\begin{defn}[Test function $g^{+1,0}$]
\label{d_tfcn+10} We define $g^{+1,0}$ in exactly the same manner.
Rigorously, we define $\Xi:\mathcal{X}\rightarrow\mathcal{X}$ by
\[
(\Xi(\sigma))(x)=\begin{cases}
+1 & \text{if }\sigma(x)=-1,\\
-1 & \text{if }\sigma(x)=+1,\\
0 & \text{if }\sigma(x)=0.
\end{cases}
\]
Then, we define $g^{+1,0}(\sigma)=g^{-1,0}(\Xi(\sigma))$.
\end{defn}

\begin{defn}[Test flows $\phi^{-1,0}$ and $\phi^{+1,0}$]
\label{d_tfl} We define $\phi^{-1,0}=\Psi_{g^{-1,0}}$ and $\phi^{+1,0}=\Psi_{g^{+1,0}}$
(cf. \eqref{e_fcnflow}) on the typical configurations and zero on the remainder.
\end{defn}

\begin{rem}
\label{r_tfcntfl}To check that the functions are well defined, it
suffices to recognize that $g^{-1,0}$ is defined as $1-\mathfrak{e}/\kappa$
on $\mathcal{R}_{2}^{-1,0}=\mathcal{B}^{-1,0}\cap\mathcal{E}^{-1}$
and $\mathfrak{e}/\kappa$ on $\mathcal{R}_{L-2}^{-1,0}=\mathcal{B}^{-1,0}\cap\mathcal{E}^{0}$.
\end{rem}

\begin{rem}
\label{r_tfcntfl2}We remark that if $\sigma,\,\sigma'\in\widehat{\mathcal{N}}(\mathcal{S})$
with $\sigma\sim\sigma'$, then either $g^{-1,0}(\sigma)=g^{-1,0}(\sigma')$
or $g^{+1,0}(\sigma)=g^{+1,0}(\sigma')$ must hold. To prove this,
recall from Remark \ref{r_typedge} that
\[
E(\widehat{\mathcal{N}}(\mathcal{S}))=E(\mathcal{B}^{-1,0})\cup E(\mathcal{B}^{0,+1})\cup E(\mathcal{E}^{-1})\cup E(\mathcal{E}^{0})\cup E(\mathcal{E}^{+1}).
\]
By Definitions \ref{d_tfcn-10} and \ref{d_tfcn+10}, we only need
to consider the case of $\{\sigma,\,\sigma'\}\in E(\mathcal{E}^{0})$.
Then, by the proof of Lemma \ref{l_auxK<L}, $g^{-1,0}(\sigma)=g^{-1,0}(\sigma')$
unless $\{\sigma,\,\sigma'\}\in E(\mathcal{N}(\mathbf{0})\cup\mathcal{Z}^{0,-1}\cup\mathcal{R}_{2}^{0,-1})$
and $g^{+1,0}(\sigma)=g^{+1,0}(\sigma')$ unless $\{\sigma,\,\sigma'\}\in E(\mathcal{N}(\mathbf{0})\cup\mathcal{Z}^{0,+1}\cup\mathcal{R}_{2}^{0,+1})$.
As
\[
E(\mathcal{N}(\mathbf{0})\cup\mathcal{Z}^{0,-1}\cup\mathcal{R}_{2}^{0,-1})\cap E(\mathcal{N}(\mathbf{0})\cup\mathcal{Z}^{0,+1}\cup\mathcal{R}_{2}^{0,+1})=E(\mathcal{N}(\mathbf{0}))
\]
and both functions are constantly zero on $\mathcal{N}(\mathbf{0})$,
we obtain the desired result. In turn, if $\sigma,\,\sigma'\in\widehat{\mathcal{N}}(\mathcal{S})$
with $\sigma\sim\sigma'$, then we have either $\phi^{-1,0}(\sigma,\,\sigma')=0$
or $\phi^{+1,0}(\sigma,\,\sigma')=0$.
\end{rem}

\subsection{Properties of fundamental test functions}

Now, we calculate the Dirichlet form of the test functions.
\begin{prop}
\label{p_tfcn}We have
\[
D_{\beta}(g^{-1,0})=\frac{1+o(1)}{3\kappa}e^{-\beta\Gamma}\;\;\;\;\text{and}\;\;\;\;D_{\beta}(g^{+1,0})=\frac{1+o(1)}{3\kappa}e^{-\beta\Gamma}.
\]
\end{prop}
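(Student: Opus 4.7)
The plan is to reduce everything to $g^{-1,0}$ via the $\pm 1$ symmetry: since the involution $\Xi$ of Definition~\ref{d_tfcn+10} preserves $H$, and hence $\mu_{\beta}$ and $c_{\beta}$, we automatically have $D_{\beta}(g^{+1,0})=D_{\beta}(g^{-1,0})$. I would then split the Dirichlet sum $D_{\beta}(g^{-1,0})$ along the decomposition $E(\widehat{\mathcal{N}}(\mathcal{S}))=E(\mathcal{B}^{-1,0})\cup E(\mathcal{B}^{0,+1})\cup E(\mathcal{E}^{-1})\cup E(\mathcal{E}^{0})\cup E(\mathcal{E}^{+1})$ from Remark~\ref{r_typedge}. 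Edges with at least one endpoint outside $\widehat{\mathcal{N}}(\mathcal{S})$ have weight $O(e^{-\beta(\Gamma+1)})$, so they contribute $o(e^{-\beta\Gamma})$ in total (there are $O(1)$ relevant such edges for fixed $K,L$, and $g^{-1,0}$ is bounded). Because $g^{-1,0}\equiv 0$ on $\mathcal{B}^{0,+1}\cup\mathcal{E}^{+1}$, only the three blocks $E(\mathcal{B}^{-1,0})$, $E(\mathcal{E}^{-1})$, and $E(\mathcal{E}^{0})$ survive.

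For the bulk block, the key observation is that $g^{-1,0}$ is affine in the zero-count $z$ on $\mathcal{B}^{-1,0}$ with gradient $-\mathfrak{b}/[\kappa K(L-4)]$, and every single-site flip internal to $\mathcal{B}^{-1,0}$ changes $z$ by one. The canonical paths of Definition~\ref{d_can} partition the edges of $\mathcal{B}^{-1,0}$ into $N_{p}(L-4)$ disjoint paths of length $K$, where $N_{p}=4$ if $K<L$ and $N_{p}=8$ if $K=L$ (the extra copies when $K=L$ arise from the transpose $\Theta$). By Remark~\ref{r_H} each such edge reaches energy $\Gamma$ at its higher endpoint, so detailed balance \eqref{e_detbal} together with \eqref{e_Zmu} gives weight $\frac{1}{3}e^{-\beta\Gamma}(1+o(1))$. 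Multiplying by the squared increment and using $N_{p}\mathfrak{b}=K(L-4)$ read off Definition~\ref{d_const}, the bulk total simplifies to $\frac{\mathfrak{b}}{3\kappa^{2}}e^{-\beta\Gamma}(1+o(1))$.

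For the $\mathcal{E}^{-1}$ block I would factor $\mathfrak{e}^{2}/\kappa^{2}$ out of Definition~\ref{d_tfcn-10} and be left with
\[
\sum_{\{\sigma,\sigma'\}\in E(\mathcal{E}^{-1})}\mu_{\beta}(\sigma)c_{\beta}(\sigma,\sigma')\bigl[h_{0,\mathfrak{d}}^{K}(\Phi^{-1,0}\Pi^{\mathbf{-1}}\sigma)-h_{0,\mathfrak{d}}^{K}(\Phi^{-1,0}\Pi^{\mathbf{-1}}\sigma')\bigr]^{2}.
\]
Edges on which $\Phi\Pi$ is constant drop out, and the nontrivial ones project to $E(V_{K})$ through the intermediate graph $\mathscr{V}^{-1}$. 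Proposition~\ref{p_proj} identifies the cumulative $\mu_{\beta}c_{\beta}$-weight over the $\mathcal{N}$-fibre of a $\mathscr{V}^{-1}$-edge with $\frac{1}{3}e^{-\beta\Gamma}r^{\mathbf{-1}}$, while Lemmas~\ref{l_auxK<L} and \ref{l_auxK=00003DL} supply the two further inputs: rates transfer as $r^{\mathbf{-1}}=r_{K}$ on matched edges, and each $\{x,y\}\in E(V_{K})$ has exactly $N_{p}$ pre-images in $E(\mathscr{V}^{-1})$. Combining these, the sum collapses to $\frac{N_{p}\mathfrak{e}^{2}}{3\kappa^{2}}\mathfrak{c}_{K}e^{-\beta\Gamma}(1+o(1))$, where $\mathfrak{c}_{K}$ from \eqref{e_cK1} is precisely the Dirichlet energy of $h_{0,\mathfrak{d}}^{K}$ on $V_{K}$. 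The defining identity $\mathfrak{e}=1/(N_{p}\mathfrak{c}_{K})$ from Definition~\ref{d_const} then reduces this block to $\frac{\mathfrak{e}}{3\kappa^{2}}e^{-\beta\Gamma}(1+o(1))$. Rerunning the same argument with $\Phi^{0,-1}\circ\Pi^{\mathbf{0}}$ on $\mathcal{E}^{0}$ — observing that $g^{-1,0}$ vanishes on the $(0,+1)$-branch, so only the $(0,-1)$ half of the projection graph is active, consistent with the dichotomy of Remark~\ref{r_tfcntfl2} — yields an identical $\frac{\mathfrak{e}}{3\kappa^{2}}e^{-\beta\Gamma}(1+o(1))$. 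Adding the three blocks produces $\frac{\mathfrak{b}+2\mathfrak{e}}{3\kappa^{2}}e^{-\beta\Gamma}=\frac{1}{3\kappa}e^{-\beta\Gamma}(1+o(1))$ by \eqref{e_kappa}, which is the claim.

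The main obstacle I anticipate is the bookkeeping for the $\mathcal{E}^{0}$ block: the two auxiliary projections $\Phi^{0,-1}$ and $\Phi^{0,+1}$ both send $\mathcal{N}(\mathbf{0})$ to the distinguished vertex $\mathfrak{d}$, and the branches $\mathcal{Z}^{0,-1}$ and $\mathcal{Z}^{0,+1}$ emanate from this shared point. I must verify rigorously that edges within $\mathcal{N}(\mathbf{0})\cup\mathcal{Z}^{0,+1}\cup\mathcal{R}_{2}^{0,+1}$ carry zero variation of $g^{-1,0}$ — which is precisely the content of Remark~\ref{r_tfcntfl2} — so that the $(0,-1)$-projection accounts for the entire $\mathcal{E}^{0}$ contribution without double counting. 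A secondary task is to confirm the identification of $\sum_{\{x,y\}\in E(V_{K})}r_{K}(x,y)[h_{0,\mathfrak{d}}^{K}(x)-h_{0,\mathfrak{d}}^{K}(y)]^{2}$ with $\mathfrak{c}_{K}$ from \eqref{e_cK1}, which I expect to read off immediately from the appendix's analysis of the auxiliary chain and its equilibrium potential.
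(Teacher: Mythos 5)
Your proposal is correct and follows essentially the same route as the paper: split $D_{\beta}(g^{-1,0})$ along the edge decomposition of Remark~\ref{r_typedge}, discard the off-$\mathcal{T}$ edges as $O(e^{-\beta(\Gamma+1)})$, compute the bulk block by counting the $N_{p}K(L-4)$ canonical edges each carrying weight $\tfrac13 e^{-\beta\Gamma}(1+o(1))$ and squared increment $[\mathfrak{b}/(\kappa K(L-4))]^{2}$, and reduce each edge block to $\mathfrak{c}_{K}$ times $N_{p}\mathfrak{e}^{2}/(3\kappa^{2})$ via Proposition~\ref{p_proj} together with Lemma~\ref{l_auxK<L} or \ref{l_auxK=00003DL} and the identity \eqref{e_cK2}. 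The two anticipated obstacles you flag are in fact non-issues for exactly the reasons you give (Remark~\ref{r_tfcntfl2} for the $\mathcal{E}^{0}$ dichotomy, and \eqref{e_cK2} for identifying the $V_{K}$-Dirichlet energy with $\mathfrak{c}_{K}$), which is how the paper handles them as well.
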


\begin{proof}
By symmetry, it suffices to estimate $D_{\beta}(g^{-1,0})$. By definition,
we write
\begin{equation}
D_{\beta}(g^{-1,0})=\Big[\sum_{\{\sigma,\zeta\}\subseteq\mathcal{T}}+\sum_{\sigma\in\mathcal{T}}\sum_{\zeta\in\mathcal{X}\setminus\mathcal{T}}+\sum_{\{\sigma,\zeta\}\subseteq\mathcal{X}\setminus\mathcal{T}}\Big]\mu_{\beta}(\sigma)c_{\beta}(\sigma,\,\zeta)[g^{-1,0}(\zeta)-g^{-1,0}(\sigma)]^{2}.\label{e_tfcn1}
\end{equation}
The third summation of \eqref{e_tfcn1} vanishes because $g^{-1,0}\equiv0$
on $\mathcal{X}\setminus\mathcal{T}$. For the second (double) summation
of \eqref{e_tfcn1}, if $\sigma\in\mathcal{T}$ and $\zeta\in\mathcal{X}\setminus\mathcal{T}$
with $\sigma\sim\zeta$, then as $\mathcal{T}=\widehat{\mathcal{N}}(\mathcal{S})$
by part (4) of Proposition \ref{p_typ}, we have $H(\zeta)\ge\Gamma+1$.
Hence, by \eqref{e_detbal},
\[
\mu_{\beta}(\sigma)c_{\beta}(\sigma,\,\zeta)=\min\{\mu_{\beta}(\sigma),\,\mu_{\beta}(\zeta)\}=\mu_{\beta}(\zeta)=O(e^{-\beta(\Gamma+1)}).
\]
Therefore, the second (double) summation is of scale $O(e^{-\beta(\Gamma+1)})$.

It remains to calculate the first summation of \eqref{e_tfcn1}. By
Remark \ref{r_typedge} and the fact that $g^{-1,0}$ is constant
on $\mathcal{B}^{0,+1}$ and on $\mathcal{E}^{+1}$, we can rewrite
the summation as
\begin{equation}
\Big[\sum_{\{\sigma,\zeta\}\subseteq\mathcal{B}^{-1,0}}+\sum_{\{\sigma,\zeta\}\subseteq\mathcal{E}^{-1}}+\sum_{\{\sigma,\zeta\}\subseteq\mathcal{E}^{0}}\Big]\mu_{\beta}(\sigma)c_{\beta}(\sigma,\,\zeta)[g^{-1,0}(\zeta)-g^{-1,0}(\sigma)]^{2}.\label{e_tfcn2}
\end{equation}
We first deal with the first summation of \eqref{e_tfcn2}. Recall
from Definition \ref{d_typ} that $\mathcal{B}^{-1,0}=\bigcup_{v=2}^{L-2}\mathcal{R}_{v}^{-1,0}\cup\bigcup_{v=2}^{L-3}\mathcal{Q}_{v}^{-1,0}$.
If $K<L$, then the first summation of \eqref{e_tfcn2} becomes
\begin{align*}
 & \sum_{v=2}^{L-3}\sum_{h=0}^{K-1}\mu_{\beta}(\zeta_{v,h}^{+\pm})c_{\beta}(\zeta_{v,h}^{+\pm},\,\zeta_{v,h+1}^{+\pm})[g^{-1,0}(\zeta_{v,h+1}^{+\pm})-g^{-1,0}(\zeta_{v,h}^{+\pm})]^{2}\\
 & +\sum_{v=2}^{L-3}\sum_{h=0}^{K-1}\mu_{\beta}(\zeta_{v,h}^{-\pm})c_{\beta}(\zeta_{v,h}^{-\pm},\,\zeta_{v,h+1}^{-\pm})[g^{-1,0}(\zeta_{v,h+1}^{-\pm})-g^{-1,0}(\zeta_{v,h}^{-\pm})]^{2},
\end{align*}
where the signs $\pm$ indicate shorthands for $+$ and $-$ (so that
the above formula actually consists of four double summations). By
\eqref{e_Zmu}, \eqref{e_detbal}, and Definition \ref{d_tfcn-10},
this asymptotically equals (cf. \eqref{e_be})
\[
4\sum_{v=2}^{L-3}\sum_{h=0}^{K-1}\frac{1}{3}e^{-\beta\Gamma}\cdot\frac{1}{\kappa^{2}}\frac{\mathfrak{b}^{2}}{K^{2}(L-4)^{2}}=\frac{4\mathfrak{b}^{2}}{3\kappa^{2}K(L-4)}e^{-\beta\Gamma}=\frac{\mathfrak{b}}{3\kappa^{2}}e^{-\beta\Gamma}.
\]
If $K=L$, then the first summation of \eqref{e_tfcn2} must be counted
twice the preceding computation due to the presence of transposed
configurations obtained by the operator $\Theta$ (cf. Definition
\ref{d_can}). Thus, the summation asymptotically equals (cf. \eqref{e_be})
\[
8\sum_{v=2}^{L-3}\sum_{h=0}^{K-1}\frac{1}{3}e^{-\beta\Gamma}\cdot\frac{1}{\kappa^{2}}\frac{\mathfrak{b}^{2}}{K^{2}(L-4)^{2}}=\frac{8\mathfrak{b}^{2}}{3\kappa^{2}K(L-4)}e^{-\beta\Gamma}=\frac{\mathfrak{b}}{3\kappa^{2}}e^{-\beta\Gamma}.
\]
Summing up, we have
\begin{equation}
\sum_{\{\sigma,\zeta\}\subseteq\mathcal{B}^{-1,0}}\mu_{\beta}(\sigma)c_{\beta}(\sigma,\,\zeta)[g^{-1,0}(\zeta)-g^{-1,0}(\sigma)]^{2}\simeq\frac{\mathfrak{b}}{3\kappa^{2}}e^{-\beta\Gamma}.\label{e_tfcn3}
\end{equation}

Next, we calculate the second summation of \eqref{e_tfcn2}. Recalling
the decomposition \eqref{e_Eadec}, we rewrite this as
\begin{align*}
 & \sum_{\{\sigma,\zeta\}\subseteq\mathcal{O}^{-1}}\mu_{\beta}(\sigma)c_{\beta}(\sigma,\,\zeta)[g^{-1,0}(\zeta)-g^{-1,0}(\sigma)]^{2}\\
 & +\sum_{\sigma\in\mathcal{O}^{-1}}\sum_{\zeta\subseteq\mathcal{I}_{\mathfrak{0}}^{-1}}\sum_{\zeta'\in\mathcal{N}(\zeta)}\mu_{\beta}(\sigma)c_{\beta}(\sigma,\,\zeta')[g^{-1,0}(\zeta')-g^{-1,0}(\sigma)]^{2}.
\end{align*}
By Proposition \ref{p_proj} and Definition \ref{d_tfcn-10}, this
is asymptotically equal to
\[
\Big[\sum_{\{\sigma,\zeta\}\subseteq\mathcal{O}^{-1}}+\sum_{\sigma\in\mathcal{O}^{-1}}\sum_{\zeta\in\mathcal{I}_{\mathfrak{0}}^{-1}}\Big]\frac{1}{3}e^{-\beta\Gamma}r^{\mathbf{-1}}(\sigma,\,\zeta)[g^{-1,0}(\zeta)-g^{-1,0}(\sigma)]^{2}.
\]
By Definition \ref{d_tfcn-10}, this becomes
\begin{equation}
\frac{1}{3}e^{-\beta\Gamma}\sum_{\{\sigma,\zeta\}\in E(\mathscr{V}^{-1})}r^{\mathbf{-1}}(\sigma,\,\zeta)\cdot\frac{\mathfrak{e}^{2}}{\kappa^{2}}\big[h_{0,\mathfrak{d}}^{K}(\Phi^{-1,0}(\zeta))-h_{0,\mathfrak{d}}^{K}(\Phi^{-1,0}(\sigma))\big]^{2}.\label{e_tfcn4}
\end{equation}
If $K<L$, then by Lemma \ref{l_auxK<L}, this becomes
\begin{align*}
 & \frac{4}{3}e^{-\beta\Gamma}\sum_{\{x,y\}\in E(V_{K})}r_{K}(x,\,y)\cdot\frac{\mathfrak{e}^{2}}{\kappa^{2}}\big[h_{0,\mathfrak{d}}^{K}(y)-h_{0,\mathfrak{d}}^{K}(x)\big]^{2}\\
 & =\frac{4\mathfrak{e}^{2}}{3\kappa^{2}}e^{-\beta\Gamma}\cdot|V_{K}|\mathrm{cap}_{K}(0,\,\mathfrak{d})=\frac{4\mathfrak{e}^{2}}{3\kappa^{2}}e^{-\beta\Gamma}\cdot\mathfrak{c}_{K}=\frac{\mathfrak{e}}{3\kappa^{2}}e^{-\beta\Gamma}.
\end{align*}
The last two equalities hold by \eqref{e_cK2} and \eqref{e_be},
respectively. If $K=L$, then by Lemma \ref{l_auxK=00003DL}, term
\eqref{e_tfcn4} equals
\[
\frac{8}{3}e^{-\beta\Gamma}\sum_{\{x,y\}\in E(V_{K})}r_{K}(x,\,y)\cdot\frac{\mathfrak{e}^{2}}{\kappa^{2}}\big[h_{0,\mathfrak{d}}^{K}(y)-h_{0,\mathfrak{d}}^{K}(x)\big]^{2}=\frac{\mathfrak{e}}{3\kappa^{2}}e^{-\beta\Gamma},
\]
which is again by \eqref{e_cK2} and \eqref{e_be}. Therefore, in
any cases, we have that
\begin{equation}
\sum_{\{\sigma,\zeta\}\subseteq\mathcal{E}^{-1}}\mu_{\beta}(\sigma)c_{\beta}(\sigma,\,\zeta)[g^{-1,0}(\zeta)-g^{-1,0}(\sigma)]^{2}\simeq\frac{\mathfrak{e}}{3\kappa^{2}}e^{-\beta\Gamma}.\label{e_tfcn5}
\end{equation}
Similarly, we have that the third summation of \eqref{e_tfcn2} is
asymptotically equal to the last displayed term. Gathering this fact,
\eqref{e_tfcn2}, \eqref{e_tfcn3}, and \eqref{e_tfcn5}, we have
that the first summation of \eqref{e_tfcn1} is asymptotically equal
to
\[
\frac{\mathfrak{b}}{3\kappa^{2}}e^{-\beta\Gamma}+\frac{2\mathfrak{e}}{3\kappa^{2}}e^{-\beta\Gamma}=\frac{1}{3\kappa}e^{-\beta\Gamma}.
\]
Therefore, we deduce that \eqref{e_tfcn1} asymptotically equals $e^{-\beta\Gamma}/(3\kappa)$,
which concludes the estimate of $D_{\beta}(g^{-1,0})$.
\end{proof}

\subsection{Properties of fundamental test flows}

We first estimate the flow norm.
\begin{prop}
\label{p_tflnorm}We have
\[
\|\phi^{-1,0}\|_{\beta}^{2}=\frac{1+o(1)}{3\kappa}e^{-\beta\Gamma}\;\;\;\;\text{and}\;\;\;\;\|\phi^{+1,0}\|_{\beta}^{2}=\frac{1+o(1)}{3\kappa}e^{-\beta\Gamma}.
\]
\end{prop}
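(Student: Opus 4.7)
The plan is to reduce this norm computation directly to the Dirichlet form computation that was just carried out in Proposition \ref{p_tfcn}. The key identity is \eqref{e_fcnflow2}, which asserts that if a flow $\phi$ coincides with $\Psi_f$ on the entire state space, then $\|\phi\|_\beta^2 = D_\beta(f)$. The flows $\phi^{-1,0}$ and $\phi^{+1,0}$ defined in Definition \ref{d_tfl} are not literally equal to $\Psi_{g^{-1,0}}$ and $\Psi_{g^{+1,0}}$ globally, since they have been truncated to vanish on edges touching $\mathcal{X}\setminus\mathcal{T}$. Therefore the task is to show that this truncation only modifies the flow norm by a negligible amount.

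By symmetry (Remark \ref{r_modelsym}), it suffices to treat $\phi^{-1,0}$. First I would unfold the definition of the flow norm \eqref{e_inpr} and use the explicit form $\phi^{-1,0}(\sigma,\zeta)=\mu_\beta(\sigma)c_\beta(\sigma,\zeta)[g^{-1,0}(\sigma)-g^{-1,0}(\zeta)]$ on edges $\{\sigma,\zeta\}$ with both endpoints in $\mathcal{T}$, and zero otherwise, to obtain
\[
\|\phi^{-1,0}\|_\beta^2 = \frac{1}{2}\sum_{\{\sigma,\zeta\}\subseteq \mathcal{T},\,\sigma\sim\zeta}\mu_\beta(\sigma)c_\beta(\sigma,\zeta)[g^{-1,0}(\sigma)-g^{-1,0}(\zeta)]^2.
\]
This is precisely the first summation appearing in the decomposition \eqref{e_tfcn1} from the proof of Proposition \ref{p_tfcn}.

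Next I would compare this expression with $D_\beta(g^{-1,0})$. Using exactly the same decomposition as in the proof of Proposition \ref{p_tfcn}, the full Dirichlet form differs from the above restricted sum by contributions coming from (i) edges with one endpoint in $\mathcal{T}$ and the other in $\mathcal{X}\setminus\mathcal{T}$, and (ii) edges with both endpoints in $\mathcal{X}\setminus\mathcal{T}$. Contribution (ii) vanishes because $g^{-1,0}\equiv 0$ on $\mathcal{X}\setminus\mathcal{T}$, and contribution (i) is $O(e^{-\beta(\Gamma+1)})$ since, by part (4) of Proposition \ref{p_typ}, any $\zeta\in\mathcal{X}\setminus\mathcal{T}=\mathcal{X}\setminus\widehat{\mathcal{N}}(\mathcal{S})$ has $H(\zeta)\ge\Gamma+1$, and hence $\mu_\beta(\sigma)c_\beta(\sigma,\zeta)\le\mu_\beta(\zeta)=O(e^{-\beta(\Gamma+1)})$ by \eqref{e_detbal}.

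Combining these two observations yields $\|\phi^{-1,0}\|_\beta^2 = D_\beta(g^{-1,0}) + O(e^{-\beta(\Gamma+1)})$, and Proposition \ref{p_tfcn} then delivers the asymptotic $(1+o(1))/(3\kappa)\cdot e^{-\beta\Gamma}$. The analogous equality for $\phi^{+1,0}$ follows verbatim via the involution $\Xi$ of Definition \ref{d_tfcn+10}. There is no real obstacle here; the only thing to verify carefully is the bookkeeping showing that truncating $\Psi_{g^{-1,0}}$ to edges within $\mathcal{T}$ produces an error of smaller order than $e^{-\beta\Gamma}$, and this is already implicit in the proof of Proposition \ref{p_tfcn}.
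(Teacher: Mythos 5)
Your proof is correct and takes essentially the same approach as the paper's (very terse) proof, which simply cites \eqref{e_fcnflow2} together with the last display of the proof of Proposition \ref{p_tfcn}; you merely make explicit that $\|\phi^{-1,0}\|_{\beta}^{2}$ is the $\mathcal{T}$-restricted sum from \eqref{e_tfcn1} and that the crossing-edge contribution is $O(e^{-\beta(\Gamma+1)})$, which is exactly the bookkeeping already carried out in the proof of Proposition \ref{p_tfcn}.
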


\begin{proof}
The formulas are straightforward from \eqref{e_fcnflow2} and the last display of the proof of Proposition
\ref{p_tfcn}.
\end{proof}
Now, we deal with the divergence of the fundamental test flows. This
procedure is crucial to estimate the right-hand side of \eqref{e_gTP}
when we apply Theorem \ref{t_gTP}, the generalized Thomson principle.
As $\phi^{-1,0}$ and $\phi^{+1,0}$ have the same structure, we focus
on estimating the former test flow $\phi^{-1,0}$.
\begin{lem}
\label{l_tfldiv1}For $\sigma\in\mathcal{B}^{-1,0}\setminus(\mathcal{E}^{-1}\cup\mathcal{E}^{0})$,
it holds that $(\mathrm{div}\,\phi^{-1,0})(\sigma)=0$.
\end{lem}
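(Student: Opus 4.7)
The plan is to verify the vanishing directly by enumerating the $\mathcal{T}$-neighbors of $\sigma$ and exploiting the affine structure of $g^{-1,0}|_{\mathcal{B}^{-1,0}}$. First, by \eqref{e_Bab} and part (2) of Proposition \ref{p_typ}, the assumption on $\sigma$ reduces to
\[
\sigma \in \bigcup_{v=3}^{L-3} \mathcal{R}_v^{-1,0} \,\cup\, \bigcup_{v=2}^{L-3} \mathcal{Q}_v^{-1,0},
\]
and since $\phi^{-1,0}$ vanishes on edges having an endpoint outside $\mathcal{T}$ (Definition \ref{d_tfl}), the divergence at $\sigma$ is a sum only over typical neighbors.

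Next, I would argue that each typical neighbor $\zeta$ of $\sigma$ again lies in $\mathcal{B}^{-1,0}$ and differs from $\sigma$ by a single $(-1)\leftrightarrow 0$ flip. Introducing a $+1$-spin into the all-$\{-1,0\}$ environment of $\sigma$ raises $H$ by at least $6$, pushing the resulting configuration out of $\widehat{\mathcal{N}}(\mathcal{S})=\mathcal{T}$; and explicit inspection of the admissible $(-1)\leftrightarrow 0$ flips on $\zeta_v^{\pm}$ and $\zeta_{v,h}^{\pm\pm}$ shows that only the canonical \emph{grow/shrink a corner} moves keep $H\le\Gamma$, and that they always land in another pre-canonical configuration. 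In particular $z(\zeta)-z(\sigma)=\pm 1$, where $z$ denotes the number of $0$-spins. Combining this with the affine formula of Definition \ref{d_tfcn-10}, which yields
\[
g^{-1,0}(\zeta)-g^{-1,0}(\sigma) = \mp \frac{\mathfrak{b}}{\kappa K(L-4)},
\]
and with the observation that $\max\{H(\sigma),H(\zeta)\}=\Gamma$ in each such edge, which via \eqref{e_detbal} gives a uniform rate $\mu_\beta(\sigma)c_\beta(\sigma,\zeta)=e^{-\beta\Gamma}/Z_\beta$, the divergence reduces to this common prefactor times the signed count of $\pm 1$ moves available at $\sigma$.

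To finish, I would perform a case-by-case enumeration and check the signed count is zero. For $\sigma=\zeta_v^{\pm}\in\mathcal{R}_v^{-1,0}$ with $3\le v\le L-3$, the four typical neighbors are the two corner protrusions at row $v+1$ (each with $z+1$) and the two corner retractions at row $v$ (each with $z-1$); for $\sigma\in\mathcal{Q}_v^{-1,0}$ with $2\le v\le L-3$, exactly two flips stay in $\mathcal{T}$, one enlarging and one reducing the protuberance, producing $z+1$ and $z-1$, respectively. In every case the $+$ and $-$ contributions exactly match. The main obstacle is to handle the corners of the protuberance ($h=1$ and $h=K-1$), where the shrinking/growing flip produces a rectangle in $\mathcal{R}_v^{-1,0}$ or $\mathcal{R}_{v+1}^{-1,0}$ rather than another element of $\mathcal{Q}_v^{-1,0}$; this is unproblematic provided one uses the consistency of $g^{-1,0}$ on the intersections $\mathcal{R}_2^{-1,0}\cap\mathcal{E}^{-1}$ and $\mathcal{R}_{L-2}^{-1,0}\cap\mathcal{E}^{0}$ recorded in Remark \ref{r_tfcntfl}. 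Finally, when $K=L$ one must also account for transposed pre-canonical configurations (Definition \ref{d_can}); but since $z$ is invariant under the transpose $\Theta$, the same pairwise cancellation still holds, completing the proof.
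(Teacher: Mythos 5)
Your proposal is correct and follows essentially the same approach as the paper's proof: identify $\mathcal{B}^{-1,0}\setminus(\mathcal{E}^{-1}\cup\mathcal{E}^{0})$ as $\bigcup_{v=3}^{L-3}\mathcal{R}_v^{-1,0}\cup\bigcup_{v=2}^{L-3}\mathcal{Q}_v^{-1,0}$, enumerate the $\mathcal{T}$-neighbors of each configuration (four for $\mathcal{R}_v$, two for $\mathcal{Q}_v$), and observe that the affine-in-$z$ formula for $g^{-1,0}$ makes the contributions cancel in $\pm$ pairs at the uniform rate $e^{-\beta\Gamma}/Z_\beta$. One minor numerical slip: introducing a $+1$-spin raises $H$ by as little as $2$ (a corner $0$-site with two $0$-neighbors), not at least $6$, but this still suffices to leave $\widehat{\mathcal{N}}(\mathcal{S})$ since $H(\sigma)\ge\Gamma-1$ on the set in question.
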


\begin{proof}
By \eqref{e_Bab} and Proposition \ref{p_typ}, we have
\[
\mathcal{B}^{-1,0}\setminus(\mathcal{E}^{-1}\cup\mathcal{E}^{0})=\bigcup_{v=3}^{L-3}\mathcal{R}_{v}^{-1,0}\cup\bigcup_{v=2}^{L-3}\mathcal{Q}_{v}^{-1,0}.
\]
If $\sigma\in\mathcal{R}_{v}^{-1,0}$, $v\in\llbracket3,\,L-3\rrbracket$,
then $\sigma\in\{\zeta_{v}^{+},\,\zeta_{v}^{-}\}$ (or additionally
in $\{\Theta(\zeta_{v}^{+}),\,\Theta(\zeta_{v}^{-})\}$ if $K=L$).
Taking $\sigma=\zeta_{v}^{+}$ for instance, $(\mathrm{div}\,\phi^{-1,0})(\sigma)$
equals
\begin{align*}
 & \phi^{-1,0}(\zeta_{v}^{+},\,\zeta_{v,1}^{++})+\phi^{-1,0}(\zeta_{v}^{+},\,\zeta_{v,1}^{+-})+\phi^{-1,0}(\zeta_{v}^{+},\,\zeta_{v-1,K-1}^{++})+\phi^{-1,0}(\zeta_{v}^{+},\,\zeta_{v-1,K-1}^{+-})\\
 & =\frac{1}{Z_{\beta}}e^{-\beta\Gamma}\cdot\frac{\mathfrak{b}}{\kappa}\Big[\frac{1}{K(L-4)}+\frac{1}{K(L-4)}-\frac{1}{K(L-4)}-\frac{1}{K(L-4)}\Big]=0.
\end{align*}
Same computation works for the other cases as well. If $\sigma\in\mathcal{Q}_{v}^{-1,0}$,
$v\in\llbracket2,\,L-3\rrbracket$, then $\sigma\in\bigcup_{h=1}^{K-1}\{\zeta_{v,h}^{++},\,\zeta_{v,h}^{+-},\,\zeta_{v,h}^{-+},\,\zeta_{v,h}^{--}\}$
(or additionally in $\bigcup_{h=1}^{K-1}\{\Theta(\zeta_{v,h}^{++}),\,\Theta(\zeta_{v,h}^{+-}),\,\Theta(\zeta_{v,h}^{-+}),\,\Theta(\zeta_{v,h}^{--})\}$
if $K=L$). Taking $\sigma=\zeta_{v,h}^{++}$ for instance, $(\mathrm{div}\,\phi^{-1,0})(\sigma)$
equals
\[
\phi^{-1,0}(\zeta_{v,h}^{++},\,\zeta_{v,h+1}^{++})+\phi^{-1,0}(\zeta_{v,h}^{++},\,\zeta_{v,h-1}^{++})=\frac{1}{Z_{\beta}}e^{-\beta\Gamma}\cdot\frac{\mathfrak{b}}{\kappa}\Big[\frac{1}{K(L-4)}-\frac{1}{K(L-4)}\Big]=0.
\]
Again, same computation works for the remaining cases. Thus, we conclude
that $\phi^{-1,0}$ is divergence-free on $\mathcal{B}^{-1,0}\setminus(\mathcal{E}^{-1}\cup\mathcal{E}^{0})$.
\end{proof}
\begin{lem}
\label{l_tfldiv2}For $\sigma\in\mathcal{R}_{2}^{-1,0}\cup\mathcal{R}_{2}^{0,-1}$,
it holds that $(\mathrm{div}\,\phi^{-1,0})(\sigma)=0$.
\end{lem}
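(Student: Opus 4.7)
The plan is to check cancellation of the divergence at each interface configuration by splitting $\phi^{-1,0}$ into a bulk and an edge contribution, then matching them using the constitutive relation between $\mathfrak{b}$, $\mathfrak{e}$, and $\mathfrak{c}_{K}$ built into Definition \ref{d_const}. I first observe that $\mathcal{R}_{2}^{0,-1}=\Xi^{0,-1}(\mathcal{R}_{2})=\mathcal{R}_{L-2}^{-1,0}$, so by Proposition \ref{p_typ} the set $\mathcal{R}_{2}^{-1,0}\cup\mathcal{R}_{2}^{0,-1}$ is exactly the interface $\mathcal{B}^{-1,0}\cap(\mathcal{E}^{-1}\cup\mathcal{E}^{0})$. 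By the reflection (and, when $K=L$, transpose) symmetries of $\zeta_{v}^{\pm}$, it suffices to verify $(\mathrm{div}\,\phi^{-1,0})(\zeta_{2}^{+})=0$; the partner configuration $\zeta_{L-2}^{+}\in\mathcal{R}_{2}^{0,-1}$ is treated identically, with the roles of bulk and edge (and hence the signs) exchanged, using the $\mathcal{E}^{0}$-formula $g^{-1,0}=(\mathfrak{e}/\kappa)\,h_{0,\mathfrak{d}}^{K}\circ\Phi^{0,-1}\circ\Pi^{\mathbf{0}}$ from Definition \ref{d_tfcn-10}.

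Next I would enumerate the neighbors of $\zeta_{2}^{+}$ in $\widehat{\mathcal{N}}(\mathcal{S})$, outside of which $\phi^{-1,0}$ vanishes by Definition \ref{d_tfl}. Only two types occur: the bulk configurations $\zeta_{2,1}^{++},\zeta_{2,1}^{+-}\in\mathcal{Q}_{2}^{-1,0}$ (attach a corner $0$ to row $3$) and the edge configurations $\zeta_{1,K-1}^{++},\zeta_{1,K-1}^{+-}\in\mathcal{O}^{-1}$ (remove a corner $0$ of row $2$); any other single-site flip raises the energy above $\Gamma$ and hence exits $\widehat{\mathcal{N}}(\mathcal{S})$. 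For the bulk pair, the bulk formula of Definition \ref{d_tfcn-10} together with detailed balance \eqref{e_detbal} assigns to each an outward flow of magnitude $Z_{\beta}^{-1}e^{-\beta\Gamma}\cdot\mathfrak{b}/[\kappa K(L-4)]$, so the total bulk contribution to $(\mathrm{div}\,\phi^{-1,0})(\zeta_{2}^{+})$ is $2Z_{\beta}^{-1}e^{-\beta\Gamma}\cdot\mathfrak{b}/[\kappa K(L-4)]$.

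For the edge pair I would invoke Lemma \ref{l_auxK<L}: the map $\Phi^{-1,0}$ sends $\zeta_{2}^{+}$ to the source vertex $0\in V_{K}$ and sends both $\zeta_{1,K-1}^{++}$ and $\zeta_{1,K-1}^{+-}$ to the unique neighbor $y_{0}$ of $0$ in $V_{K}$, with $r^{\mathbf{-1}}(\zeta_{2}^{+},\zeta_{1,K-1}^{\pm\pm})=r_{K}(0,y_{0})=1$ (using $\mathcal{N}(\zeta_{2}^{+})=\{\zeta_{2}^{+}\}$ from case (T1) of Proposition \ref{p_E<Gamma}). Combining the edge formula $g^{-1,0}|_{\mathcal{E}^{-1}}=1-(\mathfrak{e}/\kappa)\,h_{0,\mathfrak{d}}^{K}\circ\Phi^{-1,0}\circ\Pi^{\mathbf{-1}}$ with the source-vertex identity $\sum_{y}r_{K}(0,y)[1-h_{0,\mathfrak{d}}^{K}(y)]=|V_{K}|\,\mathrm{cap}_{K}(0,\mathfrak{d})=\mathfrak{c}_{K}$ produces a total (inward) edge contribution of $2Z_{\beta}^{-1}e^{-\beta\Gamma}\cdot\mathfrak{e}\mathfrak{c}_{K}/\kappa$. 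Cancellation of bulk and edge contributions then reduces to the algebraic identity $\mathfrak{b}/[K(L-4)]=\mathfrak{e}\mathfrak{c}_{K}$, which is immediate from Definition \ref{d_const}: both sides equal $1/4$ when $K<L$ and $1/8$ when $K=L$ (in the latter case the factor $2$ on each side is replaced by $4$ via Lemma \ref{l_auxK=00003DL}).

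The main obstacle will be the combinatorial accounting behind the edge contribution: one must unpack the four (respectively eight) bijections $\Phi_{i}^{-1,0}$ used in the proof of Lemma \ref{l_auxK<L} (resp. Lemma \ref{l_auxK=00003DL}) to confirm that both edges $\{\zeta_{2}^{+},\zeta_{1,K-1}^{\pm\pm}\}$ of $\mathscr{V}^{-1}$ project to the single edge $\{0,y_{0}\}$ of $V_{K}$, so that the factor $[1-h_{0,\mathfrak{d}}^{K}(y_{0})]$ appears with multiplicity exactly $2$ (resp.\ $4$)---precisely what is needed to match the corresponding bulk multiplicity and close the cancellation.
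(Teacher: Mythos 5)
Your strategy is essentially the same as the paper's, with one cosmetic difference. The paper computes the \emph{sum} $\sum_{\sigma\in\mathcal{R}_{2}^{-1,0}}(\mathrm{div}\,\phi^{-1,0})(\sigma)$, matching the edge-side contribution directly against the capacity identity at all of $\mathcal{R}_{2}^{-1,0}$ simultaneously (which is exactly where the ``four'' and ``eight'' of Lemmas \ref{l_auxK<L} and \ref{l_auxK=00003DL} are used), and then invokes model symmetry to pass to the pointwise statement. You instead compute the divergence directly at the single vertex $\zeta_{2}^{+}$ and get the other vertices by reflection/transpose symmetry. Both routes rest on the same three ingredients: the mapping $\Phi^{-1,0}$, the single-neighbor capacity identity $|V_{K}|\,\mathrm{cap}_{K}(0,\mathfrak{d})=1-h_{0,\mathfrak{d}}^{K}(y_{0})=\mathfrak{c}_{K}$, and the constitutive relation $\mathfrak{b}/[K(L-4)]=\mathfrak{e}\mathfrak{c}_{K}$ built into Definition \ref{d_const}. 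Your pointwise version is marginally more transparent; the paper's summed version is slightly cleaner because the preimage counts in Lemmas \ref{l_auxK<L}, \ref{l_auxK=00003DL} apply verbatim, whereas you must isolate which of those edges touch $\zeta_{2}^{+}$.

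One remark in your proposal is factually off, though it does not break the cancellation. You assert that when $K=L$ the factor $[1-h_{0,\mathfrak{d}}^{K}(y_{0})]$ should appear at $\zeta_{2}^{+}$ with multiplicity $4$ ``via Lemma \ref{l_auxK=00003DL}''. In fact $\zeta_{2}^{+}$ has exactly two neighbors in $\mathcal{O}^{-1}$, namely $\zeta_{1,K-1}^{++}$ and $\zeta_{1,K-1}^{+-}$, in \emph{both} regimes $K<L$ and $K=L$; the transposed configurations that appear when $K=L$ are adjacent to $\Theta(\zeta_{2}^{\pm})$ rather than to $\zeta_{2}^{+}$. The ``eight'' in Lemma \ref{l_auxK=00003DL} counts preimage edges over the whole of $\mathscr{V}^{-1}$ (two per each of the now four vertices of $\mathcal{R}_{2}^{-1,0}$), not the local degree at $\zeta_{2}^{+}$. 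Likewise the bulk multiplicity at $\zeta_{2}^{+}$ stays $2$. The passage from $1/4$ to $1/8$ in your final identity comes entirely from the extra factor $1/2$ built into $\mathfrak{b}$ and $\mathfrak{e}$ in \eqref{e_be}, and since $\mathfrak{b}/[K(L-4)]$ and $\mathfrak{e}\mathfrak{c}_{K}$ match in both regimes, your cancellation with multiplicity $2$ on each side goes through; only the intermediate bookkeeping needs correcting.
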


\begin{proof}
We only consider the set $\mathcal{R}_{2}^{-1,0}$, as the latter
set can be handled similarly. We claim that
\begin{equation}
\sum_{\sigma\in\mathcal{R}_{2}^{-1,0}}(\mathrm{div}\,\phi^{-1,0})(\sigma)=0,\label{e_tfldiv2.1}
\end{equation}
which in turn implies $(\mathrm{div}\,\phi^{-1,0})(\sigma)=0$ for
all $\sigma\in\mathcal{R}_{2}^{-1,0}$ because of the model symmetry.
Elements of $\mathcal{R}_{2}^{-1,0}$ are connected to elements of
both $\mathcal{B}^{-1,0}$ and $\mathcal{E}^{-1}$, so that
\begin{equation}
\sum_{\sigma\in\mathcal{R}_{2}^{-1,0}}(\mathrm{div}\,\phi^{-1,0})(\sigma)=\sum_{\sigma\in\mathcal{R}_{2}^{-1,0}}\Big[\sum_{\zeta\in\mathcal{E}^{-1}}+\sum_{\zeta\in\mathcal{B}^{-1,0}}\Big]\phi^{-1,0}(\sigma,\,\zeta).\label{e_tfldiv2.2}
\end{equation}
First, we consider the former double summation. By definition, this
is
\begin{align*}
 & \frac{\mathfrak{e}}{\kappa}\sum_{\sigma\in\mathcal{R}_{2}^{-1,0}}\sum_{\zeta\in\mathcal{E}^{-1}}\mu_{\beta}(\sigma)c_{\beta}(\sigma,\,\zeta)\big[h_{0,\mathfrak{d}}^{K}\big((\Phi^{-1,0}\circ\Pi^{\mathbf{-1}})(\zeta)\big)-1\big]\\
 & =-\frac{\mathfrak{e}}{Z_{\beta}\kappa}e^{-\beta\Gamma}\sum_{\sigma\in\mathcal{R}_{2}^{-1,0}}\sum_{\zeta\in\mathcal{O}^{-1}}\big[1-h_{0,\mathfrak{d}}^{K}(\Phi^{-1,0}(\zeta))\big].
\end{align*}
By Lemmas \ref{l_auxK<L}, \ref{l_auxK=00003DL}, and an elementary
property of capacities (cf. \cite[(7.1.39)]{B-denH}), this equals
\[
\begin{cases}
-\frac{\mathfrak{e}}{Z_{\beta}\kappa}e^{-\beta\Gamma}\cdot4\mathfrak{c}_{K} & \text{if }K<L,\\
-\frac{\mathfrak{e}}{Z_{\beta}\kappa}e^{-\beta\Gamma}\cdot8\mathfrak{c}_{K} & \text{if }K=L.
\end{cases}
\]
Therefore, by \eqref{e_be}, we have
\begin{equation}
\sum_{\sigma\in\mathcal{R}_{2}^{-1,0}}\sum_{\zeta\in\mathcal{E}^{-1}}\phi^{-1,0}(\sigma,\,\zeta)=-\frac{1}{Z_{\beta}\kappa}e^{-\beta\Gamma}.\label{e_tfldiv2.3}
\end{equation}

Next, we consider the latter double summation of \eqref{e_tfldiv2.2}.
We divide into two cases.
\begin{itemize}
\item Suppose that $K<L$, so that $\mathcal{R}_{2}^{-1,0}=\{\zeta_{2}^{+},\,\zeta_{2}^{-}\}$.
Then, we have by Definition \ref{d_tfl} that the summation equals
\begin{align*}
 & \phi^{-1,0}(\zeta_{2}^{+},\,\zeta_{2,1}^{++})+\phi^{-1,0}(\zeta_{2}^{+},\,\zeta_{2,1}^{+-})+\phi^{-1,0}(\zeta_{2}^{-},\,\zeta_{2,1}^{-+})+\phi^{-1,0}(\zeta_{2}^{-},\,\zeta_{2,1}^{--})\\
 & =\frac{1}{Z_{\beta}}e^{-\beta\Gamma}\cdot\frac{\mathfrak{b}}{\kappa}\Big[\frac{1}{K(L-4)}+\frac{1}{K(L-4)}+\frac{1}{K(L-4)}+\frac{1}{K(L-4)}\Big]=\frac{1}{Z_{\beta}\kappa}e^{-\beta\Gamma}.
\end{align*}
The last equality holds by \eqref{e_be}.
\item Suppose that $K=L$, so that $\mathcal{R}_{2}^{-1,0}=\{\zeta_{2}^{+},\,\zeta_{2}^{-},\,\Theta(\zeta_{2}^{+}),\,\Theta(\zeta_{2}^{-})\}$.
Then, the above summation must be exactly doubled, so that
\[
\sum_{\sigma\in\mathcal{R}_{2}^{-1,0}}\sum_{\zeta\in\mathcal{B}^{-1,0}}\phi^{-1,0}(\sigma,\,\zeta)=\frac{1}{Z_{\beta}\kappa}e^{-\beta\Gamma}\cdot\frac{8\mathfrak{b}}{K(L-4)}=\frac{1}{Z_{\beta}\kappa}e^{-\beta\Gamma},
\]
where the last equality still holds by \eqref{e_be}.
\end{itemize}
Therefore, in any cases we have
\begin{equation}
\sum_{\sigma\in\mathcal{R}_{2}^{-1,0}}\sum_{\zeta\in\mathcal{B}^{-1,0}}\phi^{-1,0}(\sigma,\,\zeta)=\frac{1}{Z_{\beta}\kappa}e^{-\beta\Gamma}.\label{e_tfldiv2.4}
\end{equation}
Combining \eqref{e_tfldiv2.2}, \eqref{e_tfldiv2.3}, and \eqref{e_tfldiv2.4}
yields \eqref{e_tfldiv2.1}, which concludes the proof.
\end{proof}
\begin{lem}
\label{l_tfldiv3}For $\sigma\in\mathcal{O}^{-1}\cup\mathcal{O}^{0}$,
it holds that $(\mathrm{div}\,\phi^{-1,0})(\sigma)=0$.
\end{lem}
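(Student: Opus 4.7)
The plan is to exploit the harmonicity of the equilibrium potential $h_{0,\mathfrak{d}}^{K}$ on $V_{K}\setminus\{0,\mathfrak{d}\}$, transferred to $\mathcal{O}^{-1}\cup\mathcal{O}^{0}$ through the projection maps constructed in Section~\ref{sec6}. Fix $\sigma\in\mathcal{O}^{-1}$. Since $\phi^{-1,0}$ vanishes outside $\mathcal{T}$ and Remark~\ref{r_typedge} forces every $\mathcal{T}$-neighbor of $\sigma$ to lie in $\mathcal{E}^{-1}$, only such neighbors contribute to $(\mathrm{div}\,\phi^{-1,0})(\sigma)$. Because $H(\sigma)=\Gamma$ and $H(\zeta)\le\Gamma$ for every $\zeta\in\mathcal{E}^{-1}$, the detailed balance relation~\eqref{e_detbal} gives $\mu_{\beta}(\sigma)c_{\beta}(\sigma,\zeta)=Z_{\beta}^{-1}e^{-\beta\Gamma}$, a constant that factors out. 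Using Definition~\ref{d_tfcn-10} for $g^{-1,0}$ on $\mathcal{E}^{-1}$ and grouping the sum over $\zeta$ by $\eta:=\Pi^{\mathbf{-1}}(\zeta)\in\mathscr{V}^{-1}$ (so that the number of $\zeta$ associated with each $\eta$ equals $r^{\mathbf{-1}}(\sigma,\eta)$ by Definition~\ref{d_Ga}), I arrive at
\[
(\mathrm{div}\,\phi^{-1,0})(\sigma)=\frac{\mathfrak{e}}{Z_{\beta}\kappa}\,e^{-\beta\Gamma}\sum_{\eta:\,\{\sigma,\eta\}\in E(\mathscr{V}^{-1})}r^{\mathbf{-1}}(\sigma,\eta)\bigl[h_{0,\mathfrak{d}}^{K}(\Phi^{-1,0}(\eta))-h_{0,\mathfrak{d}}^{K}(\Phi^{-1,0}(\sigma))\bigr].
\]

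To evaluate this inner sum, I invoke Lemmas~\ref{l_auxK<L} and~\ref{l_auxK=00003DL}. Part~(1) says that every edge $\{\sigma,\eta\}$ disjoint from $\mathcal{Z}^{-1,0}$ satisfies $\Phi^{-1,0}(\sigma)=\Phi^{-1,0}(\eta)$ and contributes zero. For the remaining edges (those intersecting $\mathcal{Z}^{-1,0}$), part~(2) identifies the rate $r^{\mathbf{-1}}(\sigma,\eta)$ with $r_{K}(\Phi^{-1,0}(\sigma),\Phi^{-1,0}(\eta))$ and places the images in $E(V_{K})$; the local bijections $\Phi_{i}^{-1,0}$ engineered in the proof of Lemma~\ref{l_auxK<L} guarantee that the surviving sub-sum reproduces $\sum_{y\sim x}r_{K}(x,y)\bigl[h_{0,\mathfrak{d}}^{K}(y)-h_{0,\mathfrak{d}}^{K}(x)\bigr]$ at $x:=\Phi^{-1,0}(\sigma)$ without duplication. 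When $\sigma\in\mathcal{Z}^{-1,0}$, we have $x\in V_{K}\setminus\{0,\mathfrak{d}\}$ (since the preimage of $0$ inside $\mathrm{dom}\,\Phi_{i}^{-1,0}$ is $\mathcal{R}_{2}^{-1,0}$ with energy $\Gamma-1\ne\Gamma$, and the preimage of $\mathfrak{d}$ there is only $\mathbf{-1}$), so harmonicity of the equilibrium potential $h_{0,\mathfrak{d}}^{K}$ kills the sum. When $\sigma\in\mathcal{O}^{-1}\setminus\mathcal{Z}^{-1,0}$, extending any $\Gamma$-path by one step, together with $\mathcal{E}^{-1}\cap\mathcal{B}_{\Gamma}^{-1,0}=\emptyset$, forces every $\mathscr{V}^{-1}$-neighbor of $\sigma$ to lie outside $\mathcal{Z}^{-1,0}$, so part~(1) eliminates every term trivially.

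For $\sigma\in\mathcal{O}^{0}$, the argument is identical with $\mathcal{E}^{-1}$, $\Phi^{-1,0}$, $\Pi^{\mathbf{-1}}$ replaced by $\mathcal{E}^{0}$, $\Phi^{0,-1}$, $\Pi^{\mathbf{0}}$, since Definition~\ref{d_tfcn-10} prescribes $g^{-1,0}=\tfrac{\mathfrak{e}}{\kappa}\,h_{0,\mathfrak{d}}^{K}\circ\Phi^{0,-1}\circ\Pi^{\mathbf{0}}$ on $\mathcal{E}^{0}$. The one extra bookkeeping item is that $\mathcal{E}^{0}$ also contains the $\mathbf{+1}$-side pieces $\mathcal{R}_{2}^{0,+1}$ and $\mathcal{Z}^{0,+1}$; these lie outside every $\mathrm{dom}\,\Phi_{i}^{0,-1}$ and are therefore sent to $\mathfrak{d}$, on which $g^{-1,0}$ is identically zero. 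Such neighbors either agree with $\sigma$ on the value of $h_{0,\mathfrak{d}}^{K}\circ\Phi^{0,-1}$ (whenever $\Phi^{0,-1}(\sigma)=\mathfrak{d}$) or are eliminated by part~(1) (whenever $\sigma\in\mathcal{Z}^{0,-1}$), and the computation closes as before. The main obstacle is the fidelity of the projection step: verifying that the divergence over $\mathscr{V}$-neighbors at $\sigma$ coincides edge-for-edge with the divergence over $V_{K}$-neighbors at $x$. This is exactly what the local bijections of Lemmas~\ref{l_auxK<L} and~\ref{l_auxK=00003DL} deliver, after which the entire argument reduces to pointwise harmonicity of $h_{0,\mathfrak{d}}^{K}$.
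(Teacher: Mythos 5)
Your proof is correct and follows essentially the same strategy as the paper: express the divergence at $\sigma$ through the projection $\Phi^{a,b}\circ\Pi^{\mathbf{a}}$ onto the auxiliary graph, split the $\mathscr{V}^{a}$-edges by whether they meet $\mathcal{Z}^{a,b}$, and conclude by harmonicity of $h_{0,\mathfrak{d}}^{K}$ on $V_{K}\setminus\{0,\mathfrak{d}\}$. The only minor blemish is the appeal to part (1) of Lemma~\ref{l_auxK<L} in the $\mathcal{O}^{0}$ bookkeeping when $\sigma\in\mathcal{Z}^{0,-1}$ (that part addresses only edges \emph{disjoint} from $\mathcal{Z}^{0,-1}$, which excludes the edge at hand); the intended conclusion holds regardless, since a $\mathscr{V}^{0}$-neighbor of $\sigma\in\mathcal{Z}^{0,-1}$ on the $\mathbf{+1}$-side would have to differ from $\sigma$ by a single spin flip while replacing the $\{-1,0\}$ spin alphabet by $\{0,+1\}$, which is impossible.
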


\begin{proof}
By symmetry, we only prove $(\mathrm{div}\,\phi^{-1,0})(\sigma)=0$
for each $\sigma\in\mathcal{O}^{-1}$. If $\sigma\in\mathcal{O}^{-1}\setminus\mathcal{Z}^{-1,0}$,
then there is nothing to prove because by Lemmas \ref{l_auxK<L} and
\ref{l_auxK=00003DL}, we have $g^{-1,0}(\sigma)=g^{-1,0}(\sigma')=1$
for all $\sigma'\in\mathcal{E}^{-1}$ with $\sigma\sim\sigma'$. Now,
assume that $\sigma\in\mathcal{Z}^{-1,0}$. To this end, we may rewrite
as
\begin{equation}
(\mathrm{div}\,\phi^{-1,0})(\sigma)=\sum_{\zeta\in\mathscr{V}^{-1}}\phi^{-1,0}(\sigma,\,\zeta)=\sum_{\zeta\in\mathcal{O}^{-1}}\phi^{-1,0}(\sigma,\,\zeta)+\sum_{\zeta\in\mathcal{I}_{\mathfrak{0}}^{-1}}\sum_{\zeta'\in\mathcal{N}(\zeta)}\phi^{-1,0}(\sigma,\,\zeta').\label{e_tfldiv3.1}
\end{equation}
The summation of $\zeta\in\mathcal{O}^{-1}$ in \eqref{e_tfldiv3.1}
becomes
\begin{equation}
\sum_{\zeta\in\mathcal{O}^{-1}}\frac{\mathfrak{e}}{Z_{\beta}\kappa}e^{-\beta\Gamma}\big[h_{0,\mathfrak{d}}^{K}\big((\Phi^{-1,0}\circ\Pi^{\mathbf{-1}})(\zeta)\big)-h_{0,\mathfrak{d}}^{K}\big((\Phi^{-1,0}\circ\Pi^{\mathbf{-1}})(\sigma)\big)\big].\label{e_tfldiv3.2}
\end{equation}
The double summation in \eqref{e_tfldiv3.1} becomes
\begin{equation}
\sum_{\zeta\in\mathcal{I}_{\mathfrak{0}}^{-1}}\sum_{\zeta'\in\mathcal{N}(\zeta):\,\sigma\sim\zeta'}\frac{\mathfrak{e}}{Z_{\beta}\kappa}e^{-\beta\Gamma}\big[h_{0,\mathfrak{d}}^{K}\big((\Phi^{-1,0}\circ\Pi^{\mathbf{-1}})(\zeta)\big)-h_{0,\mathfrak{d}}^{K}\big((\Phi^{-1,0}\circ\Pi^{\mathbf{-1}})(\sigma)\big)\big].\label{e_tfldiv3.3}
\end{equation}
By \eqref{e_tfldiv3.2} and \eqref{e_tfldiv3.3}, we have that \eqref{e_tfldiv3.1}
equals
\[
\sum_{\zeta\in\mathscr{V}^{-1}}\frac{\mathfrak{e}}{Z_{\beta}\kappa}e^{-\beta\Gamma}r^{\mathbf{-1}}(\sigma,\,\zeta)\big[h_{0,\mathfrak{d}}^{K}((\Phi^{-1,0}(\zeta))-h_{0,\mathfrak{d}}^{K}(\Phi^{-1,0}(\sigma))\big].
\]
By Lemmas \ref{l_auxK<L} and \ref{l_auxK=00003DL}, the last displayed
term equals four (if $K<L$) or eight (if $K=L$) times
\[
\frac{\mathfrak{e}}{Z_{\beta}\kappa}e^{-\beta\Gamma}\sum_{y\in V_{K}}r_{K}(\Phi^{-1,0}(\sigma),\,y)\big[h_{0,\mathfrak{d}}^{K}(y)-h_{0,\mathfrak{d}}^{K}(\Phi^{-1,0}(\sigma))\big]=0,
\]
where the equality holds by an elementary property of stochastic generators
(e.g., \cite[(7.1.15)]{B-denH}). This concludes the proof.
\end{proof}
Gathering the preceding lemmas, we have the following proposition.
\begin{prop}
\label{p_tflowdiv1}For $\sigma\in\mathcal{X}\setminus(\mathcal{N}(\mathbf{-1})\cup\mathcal{N}(\mathbf{0}))$,
we have $(\mathrm{div}\,\phi^{-1,0})(\sigma)=0$. Similarly, for $\sigma\in\mathcal{X}\setminus(\mathcal{N}(\mathbf{0})\cup\mathcal{N}(\mathbf{+1}))$,
we have $(\mathrm{div}\,\phi^{0,+1})(\sigma)=0$.
\end{prop}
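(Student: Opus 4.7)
My plan is to partition $\mathcal{X}\setminus(\mathcal{N}(\mathbf{-1})\cup\mathcal{N}(\mathbf{0}))$ into five pieces, each of which is either handled by one of Lemmas \ref{l_tfldiv1}, \ref{l_tfldiv2}, \ref{l_tfldiv3} or else made trivial by the support properties of $g^{-1,0}$. Using $\widehat{\mathcal{N}}(\mathcal{S})=\mathcal{T}$ from part (4) of Proposition \ref{p_typ}, the decomposition \eqref{e_T}, and formula \eqref{e_Ia}, the complement splits as
\[
\bigl(\mathcal{X}\setminus\mathcal{T}\bigr)\cup\bigl(\mathcal{B}^{0,+1}\cup\mathcal{E}^{+1}\bigr)\cup\bigl(\mathcal{B}^{-1,0}\setminus(\mathcal{E}^{-1}\cup\mathcal{E}^{0})\bigr)\cup\bigl(\mathcal{O}^{-1}\cup\mathcal{R}_{2}^{-1,0}\bigr)\cup\bigl(\mathcal{O}^{0}\cup\mathcal{R}_{L-2}^{-1,0}\cup\mathcal{R}_{2}^{0,+1}\bigr).
\]

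For the first piece $\mathcal{X}\setminus\mathcal{T}$, Definition \ref{d_tfl} forces $\phi^{-1,0}(\sigma,\cdot)\equiv 0$, so its divergence is trivially zero. For $\sigma\in\mathcal{B}^{0,+1}\cup\mathcal{E}^{+1}$, the function $g^{-1,0}$ itself vanishes by Definition \ref{d_tfcn-10}, and the edge decomposition $E(\widehat{\mathcal{N}}(\mathcal{S}))=E(\mathcal{B}^{-1,0})\cup E(\mathcal{B}^{0,+1})\cup E(\mathcal{E}^{-1})\cup E(\mathcal{E}^{0})\cup E(\mathcal{E}^{+1})$ from Remark \ref{r_typedge} constrains every neighbor $\zeta$ of $\sigma$ in $\mathcal{T}$ to lie either in $\mathcal{B}^{0,+1}\cup\mathcal{E}^{+1}$ (on which $g^{-1,0}\equiv 0$) or, through the overlaps $\mathcal{R}_{2}^{0,+1}=\mathcal{B}^{0,+1}\cap\mathcal{E}^{0}$ and $\mathcal{R}_{L-2}^{0,+1}=\mathcal{B}^{0,+1}\cap\mathcal{E}^{+1}$, in $\mathcal{E}^{0}$. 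In the second case a direct inspection of the constructions of $\Phi^{0,-1}$ in Lemmas \ref{l_auxK<L} and \ref{l_auxK=00003DL} shows that such a $\zeta$ is mapped to the coffin state $\mathfrak{d}$, whence $g^{-1,0}(\zeta)=(\mathfrak{e}/\kappa)\,h_{0,\mathfrak{d}}^{K}(\mathfrak{d})=0$; thus every summand of $(\mathrm{div}\,\phi^{-1,0})(\sigma)$ still vanishes.

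The remaining three pieces are then covered directly by the preceding lemmas: Lemma \ref{l_tfldiv1} handles $\mathcal{B}^{-1,0}\setminus(\mathcal{E}^{-1}\cup\mathcal{E}^{0})$, Lemma \ref{l_tfldiv3} handles $\mathcal{O}^{-1}\cup\mathcal{O}^{0}$, and Lemma \ref{l_tfldiv2} handles $\mathcal{R}_{2}^{-1,0}$ together with $\mathcal{R}_{L-2}^{-1,0}=\mathcal{R}_{2}^{0,-1}$. The one remaining configuration type, $\mathcal{R}_{2}^{0,+1}$, already sits inside $\mathcal{B}^{0,+1}$ and has thus been treated in the second step. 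The statement for $\phi^{0,+1}$ (equivalently $\phi^{+1,0}$ up to the anti-symmetry sign, which preserves divergence-freeness) then follows by transporting the entire argument through the spin involution $\Xi$ of Definition \ref{d_tfcn+10}, which intertwines $g^{-1,0}$ with $g^{+1,0}$.

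The main (mild) obstacle is the second step above: one must ensure that no flow leaks from a vertex of $\mathcal{B}^{0,+1}$ (where $g^{-1,0}=0$) into $\mathcal{E}^{0}$ (where $g^{-1,0}$ is a priori nonzero on $\mathcal{Z}^{0,-1}\cup\mathcal{R}_{2}^{0,-1}$ and its $\mathcal{N}$-neighborhoods). This is controlled by the geometric fact that $\Phi^{0,-1}$ sends every configuration unrelated to the $\mathbf{0}\leftrightarrow\mathbf{-1}$ saddle structure to $\mathfrak{d}$; in particular, any $\mathcal{E}^{0}$-neighbor of an element of $\mathcal{R}_{2}^{0,+1}$ lies on the ``$+1$ side'' of the projected graph $\mathscr{V}^{0}$ and is mapped to $\mathfrak{d}$.
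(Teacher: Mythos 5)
Your proof is correct, and it traverses essentially the same territory as the paper: the decomposition via Remark \ref{r_typedge} together with Lemmas \ref{l_tfldiv1}, \ref{l_tfldiv2}, and \ref{l_tfldiv3} carries the substance, exactly as in the paper's own argument. You are, if anything, slightly more careful than the paper at one spot. The paper asserts that the union handled by Lemmas \ref{l_tfldiv1}--\ref{l_tfldiv3} equals $(\mathcal{B}^{-1,0}\cup\mathcal{E}^{-1}\cup\mathcal{E}^{0})\setminus(\mathcal{N}(\mathbf{-1})\cup\mathcal{N}(\mathbf{0}))$, but this latter set also contains $\mathcal{R}_{2}^{0,+1}\subseteq\mathcal{E}^{0}$, which falls under none of those three lemmas and also does not lie in the remainder $\mathcal{X}\setminus(\mathcal{B}^{-1,0}\cup\mathcal{E}^{-1}\cup\mathcal{E}^{0})$. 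Your explicit treatment of $\mathcal{B}^{0,+1}\cup\mathcal{E}^{+1}$ --- in particular the observation that $\Phi^{0,-1}$ sends every $\mathcal{E}^{0}$-neighbor of $\mathcal{R}_{2}^{0,+1}$ to $\mathfrak{d}$, so $g^{-1,0}$ vanishes on both endpoints of any such edge --- is exactly what closes this small bookkeeping gap, in the same spirit as Remark \ref{r_tfcntfl2}.
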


\begin{proof}
We only prove the first statement. By Definition \ref{d_tfl}, the
test flow $\phi^{-1,0}$ is divergence-free on $\mathcal{X}\setminus(\mathcal{B}^{-1,0}\cup\mathcal{E}^{-1}\cup\mathcal{E}^{0})$.
By Lemmas \ref{l_tfldiv1}, \ref{l_tfldiv2}, and \ref{l_tfldiv3},
$\phi^{-1,0}$ is divergence-free on
\[
\big[\mathcal{B}^{-1,0}\setminus(\mathcal{E}^{-1}\cup\mathcal{E}^{0})\big]\cup\big[\mathcal{R}_{2}^{-1,0}\cup\mathcal{R}_{2}^{0,-1}\big]\cup\big[\mathcal{O}^{-1}\cup\mathcal{O}^{0}\big].
\]
By Proposition \ref{p_typ} and \eqref{e_Ia}, the above set is precisely
$(\mathcal{B}^{-1,0}\cup\mathcal{E}^{-1}\cup\mathcal{E}^{0})\setminus(\mathcal{N}(\mathbf{-1})\cup\mathcal{N}(\mathbf{0}))$.
This observation concludes the proof.
\end{proof}
Finally, we provide estimates for the divergence on the remainder.
\begin{prop}
\label{p_tflowdiv2}We have
\begin{equation}
\sum_{\sigma\in\mathcal{N}(\mathbf{-1})}(\mathrm{div}\,\phi^{-1,0})(\sigma)\simeq\frac{1}{3\kappa}e^{-\beta\Gamma}\;\;\;\;\text{and}\;\;\;\;\sum_{\sigma\in\mathcal{N}(\mathbf{0})}(\mathrm{div}\,\phi^{-1,0})(\sigma)\simeq-\frac{1}{3\kappa}e^{-\beta\Gamma}.\label{e_tflowdiv2.1}
\end{equation}
Similarly, we have
\begin{equation}
\sum_{\sigma\in\mathcal{N}(\mathbf{+1})}(\mathrm{div}\,\phi^{+1,0})(\sigma)\simeq\frac{1}{3\kappa}e^{-\beta\Gamma}\;\;\;\;\text{and}\;\;\;\;\sum_{\sigma\in\mathcal{N}(\mathbf{0})}(\mathrm{div}\,\phi^{+1,0})(\sigma)\simeq-\frac{1}{3\kappa}e^{-\beta\Gamma}.\label{e_tflowdiv2.2}
\end{equation}
\end{prop}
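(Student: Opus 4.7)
The plan is to reduce the four asymptotic identities to a single computation. By antisymmetry \eqref{e_flow2}, $\sum_{\sigma \in \mathcal{X}} (\mathrm{div}\,\phi^{-1,0})(\sigma) = 0$, and by Proposition \ref{p_tflowdiv1} only $\mathcal{N}(\mathbf{-1}) \cup \mathcal{N}(\mathbf{0})$ may contribute to this sum. Hence the two sums in \eqref{e_tflowdiv2.1} are negatives of each other, and it suffices to prove $\sum_{\sigma \in \mathcal{N}(\mathbf{-1})} (\mathrm{div}\,\phi^{-1,0})(\sigma) \simeq (3\kappa)^{-1} e^{-\beta\Gamma}$. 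Once this is done, \eqref{e_tflowdiv2.2} follows immediately from the spin-flip symmetry $\Xi$ of Definition \ref{d_tfcn+10}, which carries $\phi^{-1,0}$ to $\phi^{+1,0}$ and interchanges $\mathbf{-1}$ with $\mathbf{+1}$ while fixing $\mathbf{0}$.

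For the remaining single estimate I would first cancel edges internal to $\mathcal{N}(\mathbf{-1})$ by antisymmetry, and then invoke Remark \ref{r_typedge} together with a direct energetic analysis of single-spin-flip neighbors of $\zeta_{2}^{\pm}$ (all of which have energy strictly greater than $\Gamma-1$, and thus cannot belong to $\mathcal{N}(\mathbf{-1})$) to conclude that every outgoing edge of $\mathcal{N}(\mathbf{-1})$ in $E(\widehat{\mathcal{N}}(\mathcal{S}))$ lands in $\mathcal{O}^{-1}$. On any such edge, Definition \ref{d_tfcn-10} gives $g^{-1,0}(\sigma) = 1$ and $g^{-1,0}(\zeta) = 1 - (\mathfrak{e}/\kappa) h_{0,\mathfrak{d}}^{K}(\Phi^{-1,0}(\zeta))$, so
\[
\phi^{-1,0}(\sigma, \zeta) = \mu_{\beta}(\sigma) c_{\beta}(\sigma, \zeta) \cdot \frac{\mathfrak{e}}{\kappa}\, h_{0,\mathfrak{d}}^{K}(\Phi^{-1,0}(\zeta)).
\]
Part (2) of Proposition \ref{p_proj} with $\sigma_{2} = \mathbf{-1}$ then collapses the sum over $\sigma \in \mathcal{N}(\mathbf{-1})$ into the effective rate $r^{\mathbf{-1}}(\zeta, \mathbf{-1})$, uniformly up to $Ce^{-\beta(\Gamma+1)}$ per term.

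Lemmas \ref{l_auxK<L} and \ref{l_auxK=00003DL} next project the remaining sum over $\zeta \in \mathcal{O}^{-1}$ onto the auxiliary graph $V_{K}$, with multiplicity $m = 4$ if $K < L$ and $m = 8$ if $K = L$, producing
\[
\sum_{\sigma \in \mathcal{N}(\mathbf{-1})} (\mathrm{div}\,\phi^{-1,0})(\sigma) \simeq \frac{m \mathfrak{e}}{3\kappa}\, e^{-\beta\Gamma} \sum_{y \in V_{K}} r_{K}(\mathfrak{d}, y)\, h_{0,\mathfrak{d}}^{K}(y).
\]
The last sum equals $\mathfrak{c}_{K}$ by the standard reversible capacity identity $|V_{K}| \mathrm{cap}_{K}(0, \mathfrak{d}) = \sum_{y} r_{K}(\mathfrak{d}, y)\, h_{0,\mathfrak{d}}^{K}(y)$ together with \eqref{e_cK2}, while \eqref{e_be} gives $m \mathfrak{e} \mathfrak{c}_{K} = 1$ in both regimes, yielding the target asymptotic. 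The main obstacle I anticipate is the combinatorial point flagged above: ruling out spurious outgoing edges of $\mathcal{N}(\mathbf{-1})$ into $\mathcal{R}_{2}^{-1,0}$, without which the flow balance hides a contribution proportional to $\mathfrak{b}/\kappa$ that would spoil the prefactor. Once that exclusion is in place, the rest is routine bookkeeping of the Proposition \ref{p_proj} error terms.
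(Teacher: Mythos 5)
Your proposal is correct and follows essentially the same route as the paper: both rewrite the divergence sum over $\mathcal{N}(\mathbf{-1})$ as a sum over edges into $\mathcal{O}^{-1}$, collapse the inner sum to the effective rate $r^{\mathbf{-1}}(\zeta,\,\mathbf{-1})$ via Proposition \ref{p_proj}, project onto $V_{K}$ through Lemmas \ref{l_auxK<L} and \ref{l_auxK=00003DL}, and identify $\mathfrak{c}_{K}$ by the one-point capacity formula before closing with \eqref{e_be}. Your two shortcuts — using antisymmetry together with Proposition \ref{p_tflowdiv1} to obtain the second sum in \eqref{e_tflowdiv2.1} for free, and deducing \eqref{e_tflowdiv2.2} from the involution $\Xi$ — are legitimate and merely make explicit what the paper dismisses as ``similar,'' while your energetic check that no outgoing edge of $\mathcal{N}(\mathbf{-1})$ lands in $\mathcal{R}_{2}^{-1,0}$ spells out a step the paper handles implicitly.
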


\begin{proof}
First, we focus on the first formula of \eqref{e_tflowdiv2.1}. By
Definition \ref{d_tfl}, this becomes
\[
\sum_{\sigma\in\mathcal{N}(\mathbf{-1})}\sum_{\zeta\in\mathcal{O}^{-1}:\,\sigma\sim\zeta}\phi^{-1,0}(\sigma,\,\zeta)=\sum_{\zeta\in\mathcal{O}^{-1}}\sum_{\sigma\in\mathcal{N}(\mathbf{-1}):\,\sigma\sim\zeta}\phi^{-1,0}(\sigma,\,\zeta).
\]
Substituting the exact value of $\phi^{-1,0}$ and from the fact that
$\phi^{-1,0}$ is anti-symmetric, we compute this as
\begin{align*}
 & \sum_{\zeta\in\mathcal{O}^{-1}}\sum_{\sigma\in\mathcal{N}(\mathbf{-1}):\,\sigma\sim\zeta}\frac{\mathfrak{e}}{Z_{\beta}\kappa}e^{-\beta\Gamma}\big[h_{0,\mathfrak{d}}^{K}(\Phi^{-1,0}(\zeta))-h_{0,\mathfrak{d}}^{K}(\Phi^{-1,0}(\mathbf{-1}))\big]\\
 & =\sum_{\zeta\in\mathcal{O}^{-1}}\frac{\mathfrak{e}}{Z_{\beta}\kappa}e^{-\beta\Gamma}r^{\mathbf{-1}}(\zeta,\,\mathbf{-1})\big[h_{0,\mathfrak{d}}^{K}(\Phi^{-1,0}(\zeta))-h_{0,\mathfrak{d}}^{K}(\Phi^{-1,0}(\mathbf{-1}))\big].
\end{align*}
By Lemmas \ref{l_auxK<L} and \ref{l_auxK=00003DL}, this becomes
\[
\begin{cases}
\frac{\mathfrak{e}}{Z_{\beta}\kappa}e^{-\beta\Gamma}\cdot4\mathfrak{c}_{K} & \text{if }K<L,\\
\frac{\mathfrak{e}}{Z_{\beta}\kappa}e^{-\beta\Gamma}\cdot8\mathfrak{c}_{K} & \text{if }K=L,
\end{cases}
\]
which is exactly $e^{-\beta\Gamma}/(Z_{\beta}\kappa)$ by \eqref{e_be}.
This proves the first formula of \eqref{e_tflowdiv2.1} by \eqref{e_Zmu}.
The second formula of \eqref{e_tflowdiv2.1} similarly follows as
\[
\sum_{\sigma\in\mathcal{N}(\mathbf{0})}(\mathrm{div}\,\phi^{-1,0})(\sigma)=-\frac{1}{Z_{\beta}\kappa}e^{-\beta\Gamma}.
\]
Finally, the formulas in \eqref{e_tflowdiv2.2} can be proved in the
same manner.
\end{proof}

\section{\label{sec8}Capacity Estimates}

In this section, we provide precise estimates of the relevant capacities
and thereby prove Theorem \ref{t_Cap}.

\subsection{Proof of parts (1) and (2) of Theorem \ref{t_Cap}}

By symmetry, it suffices to estimate $\mathrm{Cap}_{\beta}(\mathbf{-1},\,\{\mathbf{0},\,\mathbf{+1}\})$
and $\mathrm{Cap}_{\beta}(\mathbf{-1},\,\mathbf{0})$. For both objects,
we use the test function $g^{-1,0}$ (cf. Definition \ref{d_tfcn-10})
and the test flow $\phi^{-1,0}$ (cf. Definition \ref{d_tfl}).
\begin{proof}[Proof of parts (1) and (2) of Theorem \ref{t_Cap}]
 First, note that $g^{-1,0}\in\mathfrak{C}(\{\mathbf{-1}\},\,\{\mathbf{0},\,\mathbf{+1}\})\subseteq\mathfrak{C}(\{\mathbf{-1}\},\,\{\mathbf{0}\})$.
Hence, by the Dirichlet principle and Proposition \ref{p_tfcn}, we
have
\begin{equation}
\mathrm{Cap}_{\beta}(\mathbf{-1},\,\{\mathbf{0},\,\mathbf{+1}\}),\,\mathrm{Cap}_{\beta}(\mathbf{-1},\,\mathbf{0})\le D_{\beta}(g^{-1,0})=\frac{1+o(1)}{3\kappa}e^{-\beta\Gamma}.\label{e_Cap1}
\end{equation}
Next, we consider the lower bounds using the generalized Thomson principle.
First, by Proposition \ref{p_tflnorm}, we have
\[
\|\phi^{-1,0}\|_{\beta}^{2}=\frac{1+o(1)}{3\kappa}e^{-\beta\Gamma}.
\]
Next, Proposition \ref{p_tflowdiv1} implies that $(\mathrm{div}\,\phi^{-1,0})(\sigma)=0$
for all $\sigma\notin\mathcal{N}(\mathbf{-1})\cup\mathcal{N}(\mathbf{0})$.
Moreover, by Lemma \ref{l_eqp}, there exists a constant $C=C(K,\,L)>0$
such that we have
\[
\max_{\zeta\in\mathcal{N}(\mathbf{-1})}\big|h(\zeta)-h(\mathbf{-1})\big|\le Ce^{-\beta},\;\;\;\;\max_{\zeta\in\mathcal{N}(\mathbf{0})}\big|h(\zeta)-h(\mathbf{0})\big|\le Ce^{-\beta}
\]
for both $h=h_{\mathbf{-1},\{\mathbf{0},\mathbf{+1}\}}^{\beta}$ and
$h_{\mathbf{-1},\mathbf{0}}^{\beta}$. Thus, we have
\begin{align*}
\sum_{\sigma\in\mathcal{X}}h(\sigma)(\mathrm{div}\,\phi^{-1,0})(\sigma) & =\sum_{\sigma\in\mathcal{N}(\mathbf{-1})\cup\mathcal{N}(\mathbf{0})}h(\sigma)(\mathrm{div}\,\phi^{-1,0})(\sigma)\\
 & \simeq h(\mathbf{-1})\sum_{\sigma\in\mathcal{N}(\mathbf{-1})}(\mathrm{div}\,\phi^{-1,0})(\sigma)+h(\mathbf{0})\sum_{\sigma\in\mathcal{N}(\mathbf{0})}(\mathrm{div}\,\phi^{-1,0})(\sigma).
\end{align*}
By Proposition \ref{p_tflowdiv2}, the last formula asymptotically
equals
\[
\frac{1}{3\kappa}e^{-\beta\Gamma}[h(\mathbf{-1})-h(\mathbf{0})]=\frac{1}{3\kappa}e^{-\beta\Gamma},
\]
because $h_{\mathbf{-1},\{\mathbf{0},\mathbf{+1}\}}^{\beta}(\mathbf{-1})=h_{\mathbf{-1},\mathbf{0}}^{\beta}(\mathbf{-1})=1$
and $h_{\mathbf{-1},\{\mathbf{0},\mathbf{+1}\}}^{\beta}(\mathbf{0})=h_{\mathbf{-1},\mathbf{0}}^{\beta}(\mathbf{0})=0$.
Summing up, we have
\[
\frac{1}{\|\phi^{-1,0}\|_{\beta}^{2}}\Big[\sum_{\sigma\in\mathcal{X}}h(\sigma)(\mathrm{div}\,\phi^{-1,0})(\sigma)\Big]^{2}\simeq\frac{1}{3\kappa}e^{-\beta\Gamma},
\]
which holds for both $h=h_{\mathbf{-1},\{\mathbf{0},\mathbf{+1}\}}^{\beta}$
and $h_{\mathbf{-1},\mathbf{0}}^{\beta}$. Hence, by the generalized
Thomson principle in Theorem \ref{t_gTP}, we have
\begin{equation}
\mathrm{Cap}_{\beta}(\mathbf{-1},\,\{\mathbf{0},\,\mathbf{+1}\}),\,\mathrm{Cap}_{\beta}(\mathbf{-1},\,\mathbf{0})\ge\frac{1+o(1)}{3\kappa}e^{-\beta\Gamma}.\label{e_Cap2}
\end{equation}
Therefore, by \eqref{e_Cap1} and \eqref{e_Cap2}, we conclude the
proof.
\end{proof}

\subsection{Proof of part (3) of Theorem \ref{t_Cap}}

We compute $\mathrm{Cap}_{\beta}(\mathbf{0},\,\{\mathbf{-1},\,\mathbf{+1}\})$.
\begin{proof}[Proof of part (3) of Theorem \ref{t_Cap}]
 Here, we use the test objects
\[
g=1-g^{-1,0}-g^{+1,0}\;\;\;\;\text{and}\;\;\;\;\phi=-\phi^{-1,0}-\phi^{+1,0}.
\]
First, Definitions \ref{d_tfcn-10} and \ref{d_tfcn+10} imply that
\begin{equation}
g^{-1,0}(\mathbf{s})=\begin{cases}
1 & \text{if }\mathbf{s}=\mathbf{-1}\\
0 & \text{if }\mathbf{s}=\mathbf{0},\,\mathbf{+1}
\end{cases}\;\;\;\;\text{and}\;\;\;\;g^{+1,0}(\mathbf{s})=\begin{cases}
1 & \text{if }\mathbf{s}=\mathbf{+1},\\
0 & \text{if }\mathbf{s}=\mathbf{-1},\,\mathbf{0}.
\end{cases}\label{e_Cap2.1}
\end{equation}
Thus, $g\in\mathfrak{C}(\{\mathbf{0}\},\,\{\mathbf{-1},\,\mathbf{+1}\})$.
Moreover, we write
\begin{align*}
D_{\beta}(g) & =\sum_{\{\sigma,\zeta\}\in E(\mathcal{X})}\mu_{\beta}(\sigma)c_{\beta}(\sigma,\,\zeta)\{g(\zeta)-g(\sigma)\}^{2}\\
 & =\sum_{\{\sigma,\zeta\}\in E(\mathcal{X})}\mu_{\beta}(\sigma)c_{\beta}(\sigma,\,\zeta)\{g^{-1,0}(\sigma)-g^{-1,0}(\zeta)+g^{+1,0}(\sigma)-g^{+1,0}(\zeta)\}^{2}.
\end{align*}
By Remark \ref{r_tfcntfl2}, if $\sigma\sim\zeta$, then $g^{-1,0}(\sigma)=g^{-1,0}(\zeta)$
or $g^{+1,0}(\sigma)=g^{+1,0}(\zeta)$. This implies that the last
summation equals
\begin{align*}
 & \sum_{\{\sigma,\zeta\}\in E(\mathcal{X})}\mu_{\beta}(\sigma)c_{\beta}(\sigma,\,\zeta)\big[\{g^{-1,0}(\sigma)-g^{-1,0}(\zeta)\}^{2}+\{g^{+1,0}(\sigma)-g^{+1,0}(\zeta)\}^{2}\big]\\
 & =D_{\beta}(g^{-1,0})+D_{\beta}(g^{+1,0}).
\end{align*}
Hence, by the Dirichlet principle and Proposition \ref{p_tfcn}, we
have
\begin{equation}
\mathrm{Cap}_{\beta}(\mathbf{0},\,\{\mathbf{-1},\,\mathbf{+1}\})\le D_{\beta}(g)=D_{\beta}(g^{-1,0})+D_{\beta}(g^{+1,0})=\frac{2+o(1)}{3\kappa}e^{-\beta\Gamma}.\label{e_Cap2.2}
\end{equation}
Next, we handle the lower bound. By Remark \ref{r_tfcntfl2}, we have
\[
\|\phi\|_{\beta}^{2}=\sum_{\{\sigma,\zeta\}\in E(\mathcal{X})}\frac{\phi(\sigma,\,\zeta)^{2}}{\mu_{\beta}(\sigma)c_{\beta}(\sigma,\,\zeta)}=\sum_{\{\sigma,\zeta\}\in E(\mathcal{X})}\frac{\phi^{-1,0}(\sigma,\,\zeta)^{2}}{\mu_{\beta}(\sigma)c_{\beta}(\sigma,\,\zeta)}+\sum_{\{\sigma,\zeta\}\in E(\mathcal{X})}\frac{\phi^{+1,0}(\sigma,\,\zeta)^{2}}{\mu_{\beta}(\sigma)c_{\beta}(\sigma,\,\zeta)},
\]
which is exactly $\|\phi^{-1,0}\|_{\beta}^{2}+\|\phi^{+1,0}\|_{\beta}^{2}$.
Hence, by Proposition \ref{p_tflnorm}, we have
\[
\|\phi\|_{\beta}^{2}=\|\phi^{-1,0}\|_{\beta}^{2}+\|\phi^{+1,0}\|_{\beta}^{2}=\frac{2+o(1)}{3\kappa}e^{-\beta\Gamma}.
\]
Moreover, we temporarily denote by $h=h_{\mathbf{0},\{\mathbf{-1},\mathbf{+1}\}}^{\beta}$.
Then, the same deduction as in the proof of parts (1) and (2) of Theorem
\ref{t_Cap} implies that
\[
\sum_{\sigma\in\mathcal{X}}h(\sigma)(\mathrm{div}\,\phi^{-1,0})(\sigma)\simeq\frac{1}{3\kappa}e^{-\beta\Gamma}[h(\mathbf{-1})-h(\mathbf{0})]=-\frac{1}{3\kappa}e^{-\beta\Gamma}
\]
and
\[
\sum_{\sigma\in\mathcal{X}}h(\sigma)(\mathrm{div}\,\phi^{+1,0})(\sigma)\simeq\frac{1}{3\kappa}e^{-\beta\Gamma}[h(\mathbf{+1})-h(\mathbf{0})]=-\frac{1}{3\kappa}e^{-\beta\Gamma}.
\]
Hence, we have
\[
\sum_{\sigma\in\mathcal{X}}h(\sigma)(\mathrm{div}\,\phi)(\sigma)\simeq\frac{1}{3\kappa}e^{-\beta\Gamma}+\frac{1}{3\kappa}e^{-\beta\Gamma}=\frac{2}{3\kappa}e^{-\beta\Gamma}.
\]
Summing up, we have
\[
\frac{1}{\|\phi\|_{\beta}^{2}}\Big[\sum_{\sigma\in\mathcal{X}}h(\sigma)(\mathrm{div}\,\phi)(\sigma)\Big]^{2}\simeq\frac{2}{3\kappa}e^{-\beta\Gamma}.
\]
Hence, by the generalized Thomson principle in Theorem \ref{t_gTP},
we have
\begin{equation}
\mathrm{Cap}_{\beta}(\mathbf{0},\,\{\mathbf{-1},\,\mathbf{+1}\})\ge\frac{2+o(1)}{3\kappa}e^{-\beta\Gamma}.\label{e_Cap2.3}
\end{equation}
Therefore, by \eqref{e_Cap2.2} and \eqref{e_Cap2.3}, we conclude
the proof.
\end{proof}

\subsection{Proof of part (4) of Theorem \ref{t_Cap}}

Finally, we prove part (4) of Theorem \ref{t_Cap} and thereby conclude
the proof of the main theorems.
\begin{proof}[Proof of part (4) of Theorem \ref{t_Cap}]
 Here, we use the test objects
\[
g=\frac{1}{2}(1+g^{-1,0}-g^{+1,0})\;\;\;\;\text{and}\;\;\;\;\phi=\phi^{-1,0}-\phi^{+1,0}.
\]
First, \eqref{e_Cap2.1} implies that $g\in\mathfrak{C}(\{\mathbf{-1}\},\,\{\mathbf{+1}\})$.
Moreover, as in the preceding proof, Remark \ref{r_tfcntfl2} and
Proposition \ref{p_tfcn} imply that
\[
D_{\beta}(g)=\frac{1}{4}\big[D_{\beta}(g^{-1,0})+D_{\beta}(g^{+1,0})\big]=\frac{1+o(1)}{6\kappa}e^{-\beta\Gamma}.
\]
Hence, by the Dirichlet principle, we have
\begin{equation}
\mathrm{Cap}_{\beta}(\mathbf{-1},\,\mathbf{+1})\le D_{\beta}(g)=\frac{1+o(1)}{6\kappa}e^{-\beta\Gamma}.\label{e_Cap3.1}
\end{equation}
Next, again using Remark \ref{r_tfcntfl2} and Proposition \ref{p_tflnorm},
we first have
\[
\|\phi\|_{\beta}^{2}=\|\phi^{-1,0}\|_{\beta}^{2}+\|\phi^{+1,0}\|_{\beta}^{2}=\frac{2+o(1)}{3\kappa}e^{-\beta\Gamma}.
\]
Moreover, we temporarily denote by $h=h_{\mathbf{-1},\mathbf{+1}}^{\beta}$.
Then, the same deduction as above and Theorem \ref{t_eqp} imply that
\[
\sum_{\sigma\in\mathcal{X}}h(\sigma)(\mathrm{div}\,\phi^{-1,0})(\sigma)\simeq\frac{1}{3\kappa}e^{-\beta\Gamma}[h(\mathbf{-1})-h(\mathbf{0})]\simeq\frac{1}{6\kappa}e^{-\beta\Gamma}
\]
and
\[
\sum_{\sigma\in\mathcal{X}}h(\sigma)(\mathrm{div}\,\phi^{+1,0})(\sigma)\simeq\frac{1}{3\kappa}e^{-\beta\Gamma}[h(\mathbf{+1})-h(\mathbf{0})]\simeq-\frac{1}{6\kappa}e^{-\beta\Gamma}.
\]
Hence, we have
\[
\sum_{\sigma\in\mathcal{X}}h(\sigma)(\mathrm{div}\,\phi)(\sigma)=\frac{1}{6\kappa}e^{-\beta\Gamma}+\frac{1}{6\kappa}e^{-\beta\Gamma}=\frac{1}{3\kappa}e^{-\beta\Gamma}.
\]
Summing up, we have
\[
\frac{1}{\|\phi\|_{\beta}^{2}}\Big[\sum_{\sigma\in\mathcal{X}}h(\sigma)(\mathrm{div}\,\phi)(\sigma)\Big]^{2}\simeq\frac{1}{6\kappa}e^{-\beta\Gamma}.
\]
Hence, by the generalized Thomson principle, we have
\begin{equation}
\mathrm{Cap}_{\beta}(\mathbf{-1},\,\mathbf{+1})\ge\frac{1+o(1)}{6\kappa}e^{-\beta\Gamma}.\label{e_Cap3.2}
\end{equation}
Therefore, by \eqref{e_Cap3.1} and \eqref{e_Cap3.2}, we conclude
the proof.
\end{proof}

\section{\label{sec9}Periodic Boundary Conditions}

In this section, we briefly discuss the model with periodic boundary
conditions imposed. Thus, throughout this section, we asume that \textit{$\Lambda$
is given periodic boundary conditions}; that is, $\Lambda=\mathbb{T}_{K}\times\mathbb{T}_{L}$.
Compared to the logic established thus far for the open boundary case,
the storyline for the periodic boundary case is nearly the same, although
certain slight technical differences exist between the two. As our
companion paper \cite{Kim-Seo Ising-Potts} thoroughly examines the
similar Potts model (with $q=3$) with periodic boundary conditions
imposed, we refer interested readers to \cite{Kim-Seo Ising-Potts}
and provide a short summary in this section.

We handle two issues here: the energy barrier between the ground states
that appears in Theorem \ref{t_Ebarrier} and the sub-exponential
prefactor that appears in Theorem \ref{t_EK}.

\subsubsection*{Energy barrier between ground states}

Recall that Theorem \ref{t_Ebarrier} in the periodic case is interpreted
as
\[
\Gamma_{-1,0}=\Gamma_{0,+1}=\Gamma_{-1,+1}=2K+2.
\]
It can be observed that the energy barrier in this case is twice that
of the open boundary model. To explain this, we recall the canonical
path defined in Definition \ref{d_can}. The exact same canonical
path also attains the energy barrier in the periodic case. However,
in the periodic case, the maximal energy of the canonical path is
doubled, because the sites on the edges of $\Lambda$ are also connected
to the corresponding sites on the other end of $\Lambda$. Therefore,
in the periodic case, we can easily determine that (cf. Remark \ref{r_H})
\[
H(\sigma)=\begin{cases}
2K & \text{if }\sigma\in\mathcal{R}_{v}^{-1,0}\cup\mathcal{R}_{v}^{0,+1}\text{ for }v\in\llbracket1,\,L-1\rrbracket,\\
2K+2 & \text{if }\sigma\in\mathcal{Q}_{v}^{-1,0}\cup\mathcal{Q}_{v}^{0,+1}\text{ for }v\in\llbracket1,\,L-2\rrbracket,
\end{cases}
\]
so that the canonical paths are $(2K+2)$-paths connecting the ground
states in $\mathcal{S}$. Moreover, the deduction in Section \ref{sec4.2}
can be modified slightly to verify that the energy barrier is precisely
$2K+2$.

As noted in Remark \ref{r_bdry}, once the energy barrier $\Gamma=2K+2$
is settled, the large deviation-type main results in Theorem \ref{t_LDTresults}
hold without any modification. Theorem \ref{t_eqp} follows in the
same manner.

\subsubsection*{Sub-exponential prefactor}

As explained in Remark \ref{r_bdry}, the exact quantitative estimates
of the metastable transitions differ between the two boundary conditions.
The constant $\kappa$ in Theorem \ref{t_EK}, which constitutes the
sub-exponential prefactor of the Eyring--Kramers law, must be modified
to $\kappa'$ in this case. We provide the correct versions of Theorems
\ref{t_EK} and \ref{t_MC} in the periodic case.
\begin{thm}
\label{t_EK-1}Under periodic boundary conditions on $\Lambda$, there
exists a constant $\kappa'=\kappa'(K,\,L)>0$ such that parts (1)
to (4) of Theorem \ref{t_EK} hold with $\kappa'$ instead of $\kappa$.
Moreover, the constant $\kappa'$ satisfies (cf. \eqref{e_KL})
\begin{equation}
\lim_{K\rightarrow\infty}\kappa'(K,\,L)=\begin{cases}
1/4 & \text{if }K<L,\\
1/8 & \text{if }K=L.
\end{cases}\label{e_EKkappa-1}
\end{equation}
\end{thm}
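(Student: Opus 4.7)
The strategy is to replicate the entire framework of Sections \ref{sec3}--\ref{sec8} for the periodic geometry $\Lambda=\mathbb{T}_{K}\times\mathbb{T}_{L}$, with appropriately redefined canonical configurations, typical and gateway configurations, constants, and test objects. In the periodic setting, a horizontal strip $\zeta_{v}^{+}$ of spin $0$ of height $v$ now has two horizontal interfaces with the sea of spin $-1$ (top and bottom), so $H(\zeta_{v}^{+})=2K$ and a protuberance contributes $+2$, which is consistent with $\Gamma=2K+2$. Translating such a strip along the vertical cyclic group $\mathbb{T}_{L}$ produces $L$ distinct configurations of the same height $v$; the same occurs for vertical strips and for the bumped configurations in $\mathcal{Q}_{v}^{a,b}$ (with additional doubling from $\Theta$ when $K=L$). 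Thus the cardinalities of $\mathcal{R}_{v}^{a,b}$, $\mathcal{Q}_{v}^{a,b}$, $\mathcal{B}^{a,b}$, and $\mathcal{E}^{a}$ all scale by a combinatorial factor (of order $L$, and an extra factor $K$ when $K=L$) relative to the open case.

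I would then define $\kappa'=\mathfrak{b}'+2\mathfrak{e}'$, where $\mathfrak{b}'=\mathfrak{b}'(K,L)$ is the explicit bulk constant chosen so that the bulk portion of the Dirichlet form of a test function $g^{-1,0}$, built by the same formula as in Definition \ref{d_tfcn-10}, equals $\mathfrak{b}'/(3(\kappa')^{2})\cdot e^{-\beta\Gamma}$, and where $\mathfrak{e}'=\mathfrak{e}'(K,L)$ is extracted from the inverse capacity of a periodic analogue of the auxiliary process of Section \ref{secA.1}. With these constants in place, the constructions of test functions $g^{-1,0}$, $g^{+1,0}$ and test flows $\phi^{-1,0}$, $\phi^{+1,0}$, together with the divergence and norm computations of Section \ref{sec7}, yield the analogue of Theorem \ref{t_Cap} with $\kappa'$ in place of $\kappa$ via the Dirichlet and generalized Thomson principles. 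Parts (1)--(4) of Theorem \ref{t_EK-1} then follow from the same applications of \cite[Proposition 6.10]{B-L TM} reviewed in Section \ref{sec3}, using Theorem \ref{t_eqp} (which is boundary-independent).

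The main obstacle is the edge analysis. The surjection $\Phi^{a,b}:\mathscr{V}^{a}\to V_{K}$ of Lemmas \ref{l_auxK<L} and \ref{l_auxK=00003DL} has no clean analogue in the periodic case: a protuberance of spin $0$ attached to a strip can nucleate at any of $K$ sites along the top or bottom, rather than only at the two corners, and the resulting auxiliary graph is a subtree of a $K\times2$-shaped ladder with considerably richer combinatorics. Consequently, no closed-form expression for $\mathfrak{e}'$ is available; one only obtains the upper bound \eqref{e_e'}, which I would establish by comparing the periodic auxiliary chain to a simpler chain along a single rung of the ladder and invoking monotonicity of effective resistance. Although this sacrifices the explicit computability enjoyed in the open case, it is sufficient because $\mathfrak{e}'$ is asymptotically subdominant to $\mathfrak{b}'$.

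Finally, the limit \eqref{e_EKkappa-1} is obtained by a direct evaluation of $\mathfrak{b}'+2\mathfrak{e}'$ as $K\to\infty$ (forcing $L\to\infty$ since $K\le L$). The bulk computation is carried out exactly as in Proposition \ref{p_tfcn}, with the inflated count of canonical edges yielding a value of $\mathfrak{b}'$ that stabilizes to $1/4$ when $K<L$ and to $1/8$ when $K=L$; heuristically, the asymptotic factor $KL$ in Remark \ref{r_bdry2} between the open and periodic constants reflects exactly the increased number of canonical transition paths. Combined with $\mathfrak{e}'=o(1)$ from \eqref{e_e'}, this gives $\kappa'\to\mathfrak{b}'_{\infty}$ as asserted.
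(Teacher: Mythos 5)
Your proposal follows essentially the same route as the paper's Section \ref{sec9}: replicate the test-function and test-flow machinery for the periodic geometry, set $\kappa'=\mathfrak{b}'+2\mathfrak{e}'$ with an explicit bulk constant, accept that only an upper bound on the edge constant $\mathfrak{e}'$ is available, and deduce \eqref{e_EKkappa-1} from $\mathfrak{b}'\to 1/4$ (resp.\ $1/8$) together with $\mathfrak{e}'=O(1/(KL))\to 0$. The paper itself gives only this level of sketch and defers the substantive combinatorics to \cite{Kim-Seo Ising-Potts}.

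Two points deserve caution. First, you describe the periodic auxiliary object as ``a subtree of a $K\times 2$-shaped ladder,'' but Remark \ref{r_bdry3} actually says the auxiliary process is a Markov chain on the \emph{collection} of subtrees of that ladder; the state space is the family of subtrees, which is a substantially larger and more intricate object than a single subtree, and this is exactly why no closed form for $\mathfrak{e}'$ is available. Second, your proposed derivation of \eqref{e_e'} --- ``comparing the periodic auxiliary chain to a simpler chain along a single rung of the ladder and invoking monotonicity of effective resistance'' --- is too vague to be trusted as stated. Rayleigh monotonicity applied to a single-rung restriction would produce a uniform positive lower bound on the auxiliary capacity, but the $1/(KL)$ decay of $\mathfrak{e}'$ arises (as in the open case, where $\mathfrak{e}=1/(4\mathfrak{c}_K)$ and the prefactor $4$ counts the parallel passages of Lemma \ref{l_auxK<L} part (3)) from an analogous counting factor of order $KL$, not from the auxiliary capacity itself. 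That counting argument, parallel to Lemmas \ref{l_auxK<L} and \ref{l_auxK=00003DL}, is the part you would actually need to spell out; as it is, your sketch leaves that step at the same level of incompleteness as the paper's own summary, which delegates it to \cite[Proposition 7.9]{Kim-Seo Ising-Potts}.
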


Moreover, as an analogue of \eqref{e_LMC}, we define the limiting
Markov chain $\{X'(t)\}_{t\ge0}$ on $\mathcal{S}$ as the continuous-time
Markov chain associated with the transition rate given by

\begin{equation}
r_{X'}(\mathbf{s},\,\mathbf{s}')=\begin{cases}
(\kappa')^{-1} & \text{if }\{\mathbf{s},\,\mathbf{s}'\}=\{\mathbf{-1},\,\mathbf{0}\}\text{ or }\{\mathbf{0},\,\mathbf{+1}\},\\
0 & \text{otherwise}.
\end{cases}\label{e_LMC-1}
\end{equation}

\begin{thm}
\label{t_MC-1}Under periodic boundary conditions on $\Lambda$, parts
(1) and (2) of Theorem \ref{t_MC} hold with $X'(\cdot)$ instead
of $X(\cdot)$.
\end{thm}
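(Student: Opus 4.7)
The plan is to mirror, line by line, the proof of Theorem \ref{t_MC} given earlier in Section \ref{sec3}, replacing every appearance of $\kappa$ by $\kappa'$ and invoking the periodic analogue of Theorem \ref{t_Cap} in place of Theorem \ref{t_Cap} itself. Thus the first step is to establish, in the periodic setting, the four capacity estimates
\begin{equation*}
\mathrm{Cap}_{\beta}(\mathbf{-1},\,\{\mathbf{0},\,\mathbf{+1}\})\simeq\mathrm{Cap}_{\beta}(\mathbf{+1},\,\{\mathbf{-1},\,\mathbf{0}\})\simeq\mathrm{Cap}_{\beta}(\mathbf{-1},\,\mathbf{0})\simeq\mathrm{Cap}_{\beta}(\mathbf{+1},\,\mathbf{0})\simeq\tfrac{1}{3\kappa'}e^{-\beta\Gamma},
\end{equation*}
together with $\mathrm{Cap}_{\beta}(\mathbf{0},\,\{\mathbf{-1},\,\mathbf{+1}\})\simeq\tfrac{2}{3\kappa'}e^{-\beta\Gamma}$ and $\mathrm{Cap}_{\beta}(\mathbf{-1},\,\mathbf{+1})\simeq\tfrac{1}{6\kappa'}e^{-\beta\Gamma}$, all with the now-periodic value $\Gamma=2K+2$ and $\kappa'$ defined from the analogous bulk and edge constants $\mathfrak{b}'$ and $\mathfrak{e}'$ (cf.\ Remark \ref{r_bdry2}). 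This will follow from the same Dirichlet/generalized Thomson argument of Sections \ref{sec7}--\ref{sec8}, applied to the periodic versions of the test function $g^{-1,0}$ and test flow $\phi^{-1,0}$; the only genuine novelty is that the projected graph $\mathscr{V}^{a}$ now has a richer structure (Remark \ref{r_bdry3}) and must be projected onto the periodic auxiliary chain, giving $\kappa'$ only up to the sufficient upper bound \eqref{e_e'} on $\mathfrak{e}'$.

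Once these periodic capacities are in hand, part (1) follows exactly as before. Let $r_{\beta}(\cdot,\,\cdot)$ denote the transition rate of the trace chain $X_{\beta}(\cdot)$ on $\mathcal{S}$ (with the new acceleration $e^{\beta\Gamma}$, $\Gamma=2K+2$). By \cite[Lemma 6.8]{B-L TM},
\begin{equation*}
\mu_{\beta}(\mathbf{s})r_{\beta}(\mathbf{s},\,\mathbf{s}')=\tfrac{1}{2}\bigl[\mathrm{Cap}_{\beta}(\mathbf{s},\,\mathcal{S}\setminus\{\mathbf{s}\})+\mathrm{Cap}_{\beta}(\mathbf{s}',\,\mathcal{S}\setminus\{\mathbf{s}'\})-\mathrm{Cap}_{\beta}(\{\mathbf{s},\,\mathbf{s}'\},\,\mathcal{S}\setminus\{\mathbf{s},\,\mathbf{s}'\})\bigr].
\end{equation*}
For $\{\mathbf{s},\,\mathbf{s}'\}=\{\mathbf{-1},\,\mathbf{0}\}$ or $\{\mathbf{0},\,\mathbf{+1}\}$ the three capacities on the right are asymptotic to $\tfrac{1}{3\kappa'}e^{-\beta\Gamma},\tfrac{2}{3\kappa'}e^{-\beta\Gamma},\tfrac{1}{3\kappa'}e^{-\beta\Gamma}$ respectively, giving $r_{\beta}(\mathbf{s},\,\mathbf{s}')=(1+o(1))/\kappa'$ after dividing by $\mu_{\beta}(\mathbf{s})\to 1/3$. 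For $\{\mathbf{s},\,\mathbf{s}'\}=\{\mathbf{-1},\,\mathbf{+1}\}$, two capacities are $\simeq\tfrac{1}{3\kappa'}e^{-\beta\Gamma}$ while $\mathrm{Cap}_{\beta}(\{\mathbf{-1},\,\mathbf{+1}\},\,\{\mathbf{0}\})=\mathrm{Cap}_{\beta}(\mathbf{0},\,\{\mathbf{-1},\,\mathbf{+1}\})\simeq\tfrac{2}{3\kappa'}e^{-\beta\Gamma}$, producing $r_{\beta}(\mathbf{-1},\,\mathbf{+1})=o(1)$. This is exactly the rate $r_{X'}$ of \eqref{e_LMC-1}, and convergence of the trace process then follows from \cite[Theorem 2.7]{B-L TM}.

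For part (2) nothing in the argument used invariance, stationarity or reversibility in a boundary-dependent way: the bound
\begin{equation*}
\mathbb{P}_{\mathbf{s}}^{\beta}[\sigma_{\beta}(u)\notin\mathcal{S}]\le\frac{\mu_{\beta}(\mathcal{X}\setminus\mathcal{S})}{\mu_{\beta}(\mathbf{s})}
\end{equation*}
of \eqref{e_MC} together with Fubini and \eqref{e_Zmu} (which survives verbatim in the periodic setting, since $Z_\beta=3+O(e^{-\beta\cdot 4})$ under periodic boundaries) gives
\begin{equation*}
\mathbb{E}_{\mathbf{s}}^{\beta}\Bigl[\int_{0}^{t}\mathbf{1}\{\widehat{\sigma}_{\beta}(u)\notin\mathcal{S}\}\,du\Bigr]\le t\cdot\frac{\mu_{\beta}(\mathcal{X}\setminus\mathcal{S})}{\mu_{\beta}(\mathbf{s})}\longrightarrow 0,
\end{equation*}
which is part (2) in the periodic case.

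The main obstacle is the first step, namely the periodic version of Theorem \ref{t_Cap}. Specifically, because $\mathfrak{e}'=\mathfrak{e}'(K,\,L)$ cannot be solved in closed form from a symmetric recurrence as in the open case (cf.\ Remark \ref{r_bdry2}), the Dirichlet/Thomson matching of upper and lower bounds on capacities must be carried out only up to the explicit sufficient bound \eqref{e_e'}. Once the four periodic capacity asymptotics have been extracted at the level of precision $\simeq c/\kappa'\cdot e^{-\beta\Gamma}$, the rest of the reduction to the limiting Markov chain $X'(\cdot)$ is mechanical and identical to the open-boundary argument above.
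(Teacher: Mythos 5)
Your proposal is correct and spells out precisely the approach the paper intends: Section \ref{sec9} states Theorem \ref{t_MC-1} without a standalone proof, remarking that the storyline for the periodic case is "nearly the same" as the open-boundary argument and deferring to the companion paper, so the intended route is exactly your line-by-line mirroring of the proof of Theorem \ref{t_MC} via the periodic analogue of Theorem \ref{t_Cap} together with \cite[Lemma 6.8 and Theorem 2.7]{B-L TM}; your part (2) argument likewise carries over verbatim, with the correct observation that under periodic boundaries the minimal excitation energy becomes $4$ so that $Z_\beta = 3 + O(e^{-4\beta})$.

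One small wording point deserves a correction. You write that the Dirichlet/Thomson matching ``must be carried out only up to the explicit sufficient bound \eqref{e_e'},'' which could be read as saying the upper and lower capacity bounds fail to coincide in the periodic case. In fact they do coincide and give exact asymptotics $\simeq c/\kappa'\cdot e^{-\beta\Gamma}$ with $\kappa'=\mathfrak{b}'+2\mathfrak{e}'$ a well-defined constant; what is lost under periodic boundaries is only an explicit closed-form expression for the edge constant $\mathfrak{e}'$ (cf.\ Remark \ref{r_bdry2}), and the crude bound \eqref{e_e'} is used solely to extract the $K\to\infty$ limit \eqref{e_EKkappa-1}. For Theorem \ref{t_MC-1} itself the explicit value of $\kappa'$ never enters — only the sharp capacity asymptotics in terms of $\kappa'$ are needed — so this has no bearing on the correctness of your reduction, but the phrasing should be tightened to avoid suggesting a genuine gap in the capacity estimates.
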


As can be observed from Theorems \ref{t_EK-1} and \ref{t_MC-1},
the difference between the two boundary conditions lies in the constants
$\kappa$ and $\kappa'$. That is, according to \eqref{e_EKkappa}
and \eqref{e_EKkappa-1}, the constants $\kappa$ and $\kappa'$ differ
by the factor $KL$ (in the limit $K\rightarrow\infty$). We refer
to \cite[Section 17]{Kim-Seo Ising-Potts} for a thorough heuristic
explanation of this factor $KL$. We provide the precise definition
of $\kappa'$, which is an analogue of Definition \ref{d_const}.
The constant $\kappa'$ satisfies $\kappa'=\mathfrak{b}'+2\mathfrak{e}'$,
where the bulk constant $\mathfrak{b}'=\mathfrak{b}'(K,\,L)$ is defined
as
\[
\mathfrak{b}'=\begin{cases}
\frac{(K+2)(L-4)}{4KL} & \text{if }K<L\\
\frac{(K+2)(L-4)}{8KL} & \text{if }K=L
\end{cases}
\]
and the edge constant $\mathfrak{e}'=\mathfrak{e}'(K,\,L)$ is defined
in the same manner as $\mathfrak{e}$ which satisfies
\begin{equation}
\mathfrak{e}'\le\frac{C}{KL}\;\;\;\;\text{for some constant }C>0.\label{e_e'}
\end{equation}
Thus, the estimate \eqref{e_EKkappa-1} holds for $\kappa'$.

\appendix

\section{\label{secA}Auxiliary Process}

\subsection{\label{secA.1}Original auxiliary process}

\begin{figure}
\includegraphics[width=13cm]{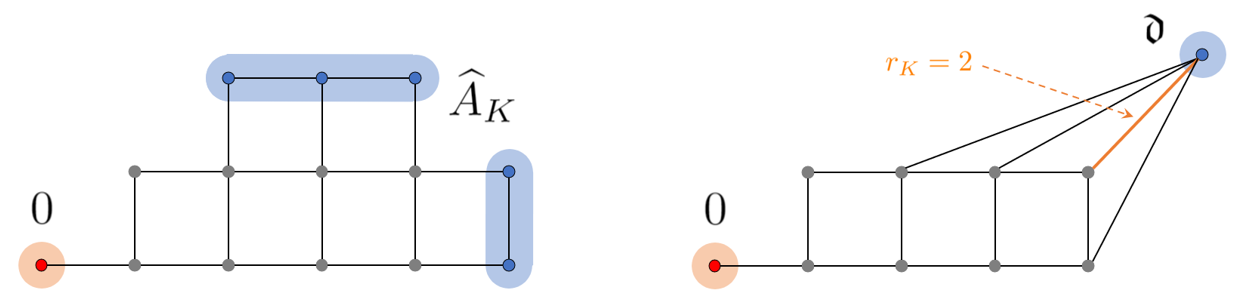}\caption{\label{figA.1}(Left) graph structure $(\widehat{V}_{K},\,E(\widehat{V}_{K}))$.
(Right) graph structure $(V_{K},\,E(V_{K}))$.}
\end{figure}
In this subsection, we define an auxiliary process which successfully
represents the Metropolis dynamics on the edge typical configurations.
For $K\ge5$, we define a graph structure $(\widehat{V}_{K},\,E(\widehat{V}_{K}))$
(see Figure \ref{figA.1} (left) for an illustration for the case
of $K=5$). First, the vertex set $\widehat{V}_{K}\subseteq\mathbb{R}^{2}$
is defined by
\begin{equation}
\widehat{V}_{K}=\{(a,\,b)\in\mathbb{R}^{2}:0\le b\le a\le K\text{ and }b\le2\}\setminus\{(K,\,2)\}.\label{e_VK}
\end{equation}
Then, the edge structure $E(\widehat{V}_{K})$ is inherited by the
Euclidean lattice. We abbreviate by $0=(0,\,0)\in\widehat{V}_{K}$
and define
\[
\widehat{A}_{K}=\{(a,\,b)\in\widehat{V}_{K}:a=K\text{ or }b=2\}.
\]
Then, we define $\{\widehat{Z}_{K}(t)\}_{t\ge0}$ as the continuous-time
random walk on the aforementioned graph whose transition rate is uniformly
$1$. In other words, the transition rate $\widehat{r}_{K}:\widehat{V}_{K}\times\widehat{V}_{K}\rightarrow[0,\,\infty)$
is given by
\[
\widehat{r}_{K}(x,\,y)=\begin{cases}
1 & \text{if }\{x,\,y\}\in E(\widehat{V}_{K}),\\
0 & \text{otherwise}.
\end{cases}
\]
Obviously, the process is reversible with respect to the uniform distribution
on $\widehat{V}_{K}$.

We denote by $\widehat{h}_{\cdot,\cdot}^{K}(\cdot)$ and $\widehat{\mathrm{cap}}_{K}(\cdot,\,\cdot)$
the equilibrium potential and capacity with respect to $\widehat{Z}_{K}(\cdot)$,
respectively, in the sense of Definition \ref{d_pot}. We define a
constant $\mathfrak{c}_{K}>0$ by
\begin{equation}
\mathfrak{c}_{K}=|\widehat{V}_{K}|\widehat{\mathrm{cap}}_{K}(0,\,\widehat{A}_{K}).\label{e_cK1}
\end{equation}
Then, we have the following asymptotic lemma.
\begin{lem}
\label{l_auxproc}There exists a positive constant $\delta$ with
$|\delta-0.435|<0.0001$ such that
\[
\lim_{K\rightarrow\infty}\mathfrak{c}_{K}=\delta.
\]
\end{lem}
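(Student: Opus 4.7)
The plan is to realize $\widehat{Z}_K(\cdot)$ as the random walk on a unit-conductance electrical network, reduce the harmonic equation for the equilibrium potential to a linear recurrence on a two-row ladder, and solve that recurrence in closed form via a palindromic characteristic polynomial. The $K\to\infty$ limit will then follow because the two ``growing'' modes of the recurrence are exponentially suppressed by the right-boundary conditions.

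First, let $h = \widehat{h}_{0,\widehat{A}_K}^{K}$. Because $\{(a,2):2\le a\le K-1\}\subseteq \widehat{A}_K$, one has $h\equiv 0$ on the entire row $b=2$, and the harmonic equations reduce to a self-contained system for $u_a:=h(a,0)$ ($0\le a\le K$) and $v_a:=h(a,1)$ ($1\le a\le K$). Since $(1,0)$ is the unique neighbor of $(0,0)$ in $\widehat{V}_K$, the standard identity (effective conductance equals current leaving the source under unit voltage) gives
\[
\mathfrak{c}_K \;=\; |\widehat{V}_K|\,\widehat{\mathrm{cap}}_K(0,\,\widehat{A}_K) \;=\; \sum_{y\sim (0,0)} \bigl(h(0)-h(y)\bigr) \;=\; 1 - u_1,
\]
so it suffices to compute $\lim_{K\to\infty}u_1$.

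Next, in the bulk $2\le a\le K-2$, harmonicity at $(a,0)$ and $(a,1)$ yields $3u_a = u_{a-1}+u_{a+1}+v_a$ and $4v_a = u_a + v_{a-1}+v_{a+1}$, with boundary constraints $u_0 = 1$, $u_K=v_K=0$, $2v_1 = u_1 + v_2$ (since $(1,1)$ has only two neighbors in $\widehat{V}_K$), and $4v_{K-1} = u_{K-1}+v_{K-2}$. Substituting a modal ansatz $u_a=C\rho^a$, $v_a=D\rho^a$ produces the palindromic characteristic equation $(\rho^2-3\rho+1)(\rho^2-4\rho+1)=\rho^2$; the substitution $t=\rho+\rho^{-1}$ reduces this to $t^2-7t+11=0$, whose roots are $t_\pm = (7\pm\sqrt{5})/2$. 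Each value of $t$ yields a reciprocal pair of roots, giving two decaying modes $\rho_2,\rho_4\in(0,1)$ and two growing modes $\rho_1=1/\rho_2$, $\rho_3=1/\rho_4$, and the eigenvector ratio within each mode is fixed by $\alpha_i := D_i/C_i = -(\rho_i^2-3\rho_i+1)/\rho_i$.

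Finally, expressing $u_a,v_a$ as a linear combination of the four modes and imposing the four boundary conditions gives a $4\times 4$ linear system. After renormalizing the growing-mode amplitudes as $C_i\rho_i^K$ for $i\in\{1,3\}$, the right-end conditions $u_K=v_K=0$ force $C_1\rho_1^K$ and $C_3\rho_3^K$ to be of order $O(\rho_2^K+\rho_4^K)$, so $C_1$ and $C_3$ themselves vanish exponentially fast and only the decaying modes survive in the limit. The left-end conditions $u_0=1$ and $2v_1 = u_1+v_2$ then reduce to $C_2+C_4=1$ together with $C_2\rho_2[1+\alpha_2(\rho_2-2)] + C_4\rho_4[1+\alpha_4(\rho_4-2)] = 0$, which determine $C_2,C_4$ explicitly as algebraic expressions in $\sqrt{5}$. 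This gives $\delta = 1-(C_2\rho_2+C_4\rho_4)$ in closed form, from which the numerical verification $|\delta-0.435|<0.0001$ is routine. The main obstacle will be the careful tracking of the modified boundary condition at $(1,1)$ together with the uniform exponential suppression of the growing-mode amplitudes; once this bookkeeping is done, the computation of $\delta$ reduces to a $2\times 2$ system in the asymptotic regime.
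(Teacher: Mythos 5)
Your proposal is correct and follows essentially the same route as the paper: both reduce $\mathfrak{c}_K$ to $1-h(1,0)$ via the divergence formula for capacity, set up the coupled linear recurrences for the potential on the rows $b=0$ and $b=1$, find the same quartic characteristic polynomial $\rho^4-7\rho^3+13\rho^2-7\rho+1=0$ via $t=\rho+\rho^{-1}$, and impose the four boundary conditions to pin down the mode amplitudes before letting $K\to\infty$. The paper indexes from the right boundary (writing $a_i=h(K-i,0)$) and extracts the limit by cancelling ratios of powers $\alpha_n^{K-2}/\alpha_n^{K-1}$, whereas you index from the left, renormalize the growing-mode amplitudes, and observe that the right-end zero conditions suppress them to $O((\rho_4/\rho_3)^K)$, leaving a $2\times2$ limiting system in the decaying modes only; this is the same computation packaged more transparently, and the remaining numerical verification of $\delta\approx 0.43501$ is, as you say, routine once the closed form is in hand.
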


\begin{proof}
We explicitly compute the equilibrium potential $\widehat{h}_{0,\widehat{A}_{K}}^{K}(\cdot)$.
For simplicity, we write $h=\widehat{h}_{0,\widehat{A}_{K}}^{K}$
and abbreviate by $h(a,\,b)=h((a,\,b))$ for $(a,\,b)\in\widehat{V}_{K}$.
We define
\[
a_{i}=h(K-i,\,0)\;\;\;\;;\;i\in\llbracket0,\,K\rrbracket\;\;\;\;\text{and}\;\;\;\;b_{i}=h(K-i,\,1)\;\;\;\;;\;i\in\llbracket0,\,K-1\rrbracket.
\]
Then, we trivially have $a_{K}=h(0,\,0)=1$,
\begin{equation}
a_{0}=h(K,\,0)=0\;\;\;\;\text{and}\;\;\;\;b_{0}=h(K,\,1)=0.\label{e_auxproc1}
\end{equation}
 Moreover, by the Markov property, the following recurrence relations
hold:
\begin{align}
3a_{i} & =a_{i+1}+a_{i-1}+b_{i}\;\;\;\;;\;i\in\llbracket1,\,K-1\rrbracket,\label{e_auxproc2}\\
4b_{i} & =b_{i+1}+b_{i-1}+a_{i}\;\;\;\;;\;i\in\llbracket1,\,K-2\rrbracket,\label{e_auxproc3}
\end{align}
\begin{equation}
2b_{K-1}=b_{K-2}+a_{K-1}\;\;\;\;\text{and}\;\;\;\;3a_{K-1}=1+a_{K-2}+b_{K-1}.\label{e_auxproc4}
\end{equation}
Then, \eqref{e_auxproc2} and \eqref{e_auxproc3} induce the following
relations:
\begin{align}
a_{i+2}-7a_{i+1}+13a_{i}-7a_{i-1}+a_{i-2} & =0\;\;\;;\;i\in\llbracket2,\,K-3\rrbracket,\label{e_auxproc5}\\
b_{i+2}-7b_{i+1}+13b_{i}-7b_{i-1}+b_{i-2} & =0\;\;\;;\;i\in\llbracket2,\,K-3\rrbracket.\label{e_auxproc6}
\end{align}
Hence, we solve $t^{4}-7t^{3}+13t^{2}-7t+1=0$, which is equivalent
to $(t+t^{-1})^{2}-7(t+t^{-1})+11=0$. This gives $t+t^{-1}=(7\pm\sqrt{5})/2$.
Thus, we define the positive constants $\alpha_{1},\,\alpha_{2},\,\alpha_{3},\,\alpha_{4}>0$
such that $\alpha_{1}>\alpha_{2}$ are the solutions of $t+t^{-1}=(7+\sqrt{5})/2$
and $\alpha_{3}>\alpha_{4}$ are the solutions of $t+t^{-1}=(7-\sqrt{5})/2$.
Then, there exist constants $p_{n}$ and $q_{n}$, $n\in\llbracket1,\,4\rrbracket$
such that we have, for $i\in\llbracket0,\,K-1\rrbracket$,
\begin{equation}
a_{i}=\sum_{n=1}^{4}p_{n}\alpha_{n}^{i}\;\;\;\;\text{and}\;\;\;\;b_{i}=\sum_{n=1}^{4}q_{n}\alpha_{n}^{i}.\label{e_auxproc7}
\end{equation}
Based on the last formula, \eqref{e_auxproc1} implies
\begin{equation}
p_{1}+p_{2}+p_{3}+p_{4}=q_{1}+q_{2}+q_{3}+q_{4}=0,\label{e_auxproc8}
\end{equation}
and \eqref{e_auxproc2} implies, for $i\in\llbracket1,\,K-1\rrbracket$,
\[
\sum_{n=1}^{4}\alpha_{n}^{i-1}\{p_{n}\alpha_{n}^{2}-(3p_{n}-q_{n})\alpha_{n}+p_{n}\}=0.
\]
As $K\ge5$, this implies that
\[
\begin{bmatrix}1 & 1 & 1 & 1\\
\alpha_{1} & \alpha_{2} & \alpha_{3} & \alpha_{4}\\
\alpha_{1}^{2} & \alpha_{2}^{2} & \alpha_{3}^{2} & \alpha_{4}^{2}\\
\alpha_{1}^{3} & \alpha_{2}^{3} & \alpha_{3}^{3} & \alpha_{4}^{3}
\end{bmatrix}\begin{bmatrix}p_{1}\alpha_{1}^{2}-(3p_{1}-q_{1})\alpha_{1}+p_{1}\\
p_{2}\alpha_{2}^{2}-(3p_{2}-q_{2})\alpha_{2}+p_{2}\\
p_{3}\alpha_{3}^{2}-(3p_{3}-q_{3})\alpha_{3}+p_{3}\\
p_{4}\alpha_{4}^{2}-(3p_{4}-q_{4})\alpha_{4}+p_{4}
\end{bmatrix}=0.
\]
As the square matrix is invertible (cf. Vandermonde matrix), we must
have $p_{n}\alpha_{n}^{2}-(3p_{n}-q_{n})\alpha_{n}+p_{n}=0$ for all
$n\in\llbracket1,\,4\rrbracket$, which implies that
\[
3p_{n}-q_{n}=\frac{7+\sqrt{5}}{2}p_{n}\;\;\;\;;\;n=1,\,2,\;\;\;\;\text{and}\;\;\;\;3p_{n}-q_{n}=\frac{7-\sqrt{5}}{2}p_{n}\;\;\;\;;\;n=3,\,4.
\]
Hence, substituting \eqref{e_auxproc8} and the last display to \eqref{e_auxproc7}
gives, for $i\in\llbracket0,\,K-1\rrbracket$,
\begin{equation}
a_{i}=p_{1}(\alpha_{1}^{i}-\alpha_{2}^{i})+p_{3}(\alpha_{3}^{i}-\alpha_{4}^{i}),\;\;\;b_{i}=-\frac{1+\sqrt{5}}{2}p_{1}(\alpha_{1}^{i}-\alpha_{2}^{i})+\frac{-1+\sqrt{5}}{2}p_{3}(\alpha_{3}^{i}-\alpha_{4}^{i}).\label{e_auxproc9}
\end{equation}
Substituting the last formula to \eqref{e_auxproc4} implies
\begin{align*}
 & (2+\sqrt{5})p_{1}(\alpha_{1}^{K-1}-\alpha_{2}^{K-1})+(2-\sqrt{5})p_{3}(\alpha_{3}^{K-1}-\alpha_{4}^{K-1})\\
 & =\frac{1+\sqrt{5}}{2}p_{1}(\alpha_{1}^{K-2}-\alpha_{2}^{K-2})+\frac{1-\sqrt{5}}{2}p_{3}(\alpha_{3}^{K-2}-\alpha_{4}^{K-2})
\end{align*}
and
\begin{align*}
 & \frac{7+\sqrt{5}}{2}p_{1}(\alpha_{1}^{K-1}-\alpha_{2}^{K-1})+\frac{7-\sqrt{5}}{2}p_{3}(\alpha_{3}^{K-1}-\alpha_{4}^{K-1})\\
 & =1+p_{1}(\alpha_{1}^{K-2}-\alpha_{2}^{K-2})+p_{3}(\alpha_{3}^{K-2}-\alpha_{4}^{K-2}).
\end{align*}
Solving the last two equations, we can express $p_{1}$ and $p_{3}$
in terms of $\alpha_{1},\,\alpha_{2},\,\alpha_{3},\,\alpha_{4}$.
Substituting these to the first equation of \eqref{e_auxproc9} for
$i=K-1$, we deduce that $a_{K-1}$ equals
\[
-\Big[(2-\sqrt{5})-\frac{1-\sqrt{5}}{2}\frac{\alpha_{3}^{K-2}-\alpha_{4}^{K-2}}{\alpha_{3}^{K-1}-\alpha_{4}^{K-1}}\Big]+\Big[(2+\sqrt{5})-\frac{1+\sqrt{5}}{2}\frac{\alpha_{1}^{K-2}-\alpha_{2}^{K-2}}{\alpha_{1}^{K-1}-\alpha_{2}^{K-1}}\Big]
\]
divided by
\begin{align*}
 & \Big[(2+\sqrt{5})-\frac{1+\sqrt{5}}{2}\frac{\alpha_{1}^{K-2}-\alpha_{2}^{K-2}}{\alpha_{1}^{K-1}-\alpha_{2}^{K-1}}\Big]\Big[\frac{7-\sqrt{5}}{2}-\frac{\alpha_{3}^{K-2}-\alpha_{4}^{K-2}}{\alpha_{3}^{K-1}-\alpha_{4}^{K-1}}\Big]\\
 & -\Big[(2-\sqrt{5})-\frac{1-\sqrt{5}}{2}\frac{\alpha_{3}^{K-2}-\alpha_{4}^{K-2}}{\alpha_{3}^{K-1}-\alpha_{4}^{K-1}}\Big]\Big[\frac{7+\sqrt{5}}{2}-\frac{\alpha_{1}^{K-2}-\alpha_{2}^{K-2}}{\alpha_{1}^{K-1}-\alpha_{2}^{K-1}}\Big].
\end{align*}
As $\alpha_{1}>\alpha_{2}$ and $\alpha_{3}>\alpha_{4}$, we have
$(\alpha_{2}/\alpha_{1})^{K-1}\rightarrow0$ and $(\alpha_{4}/\alpha_{3})^{K-1}\rightarrow0$
as $K\rightarrow\infty$. Thus, we may calculate
\begin{align*}
\lim_{K\rightarrow\infty}a_{K-1} & =\frac{-\Big[(2-\sqrt{5})-\frac{1-\sqrt{5}}{2}\frac{1}{\alpha_{3}}\Big]+\Big[(2+\sqrt{5})-\frac{1+\sqrt{5}}{2}\frac{1}{\alpha_{1}}\Big]}{\Big[(2+\sqrt{5})-\frac{1+\sqrt{5}}{2}\frac{1}{\alpha_{1}}\Big]\Big[\frac{7-\sqrt{5}}{2}-\frac{1}{\alpha_{3}}\Big]-\Big[(2-\sqrt{5})-\frac{1-\sqrt{5}}{2}\frac{1}{\alpha_{3}}\Big]\Big[\frac{7+\sqrt{5}}{2}-\frac{1}{\alpha_{1}}\Big]}\\
 & =\frac{2\sqrt{5}-\frac{1+\sqrt{5}}{2}\alpha_{2}+\frac{1-\sqrt{5}}{2}\alpha_{4}}{5\sqrt{5}+\frac{3-5\sqrt{5}}{2}\alpha_{2}+\frac{-3-5\sqrt{5}}{2}\alpha_{4}+\sqrt{5}\alpha_{2}\alpha_{4}}.
\end{align*}
In the second equality, we used that $\alpha_{1}\alpha_{2}=\alpha_{3}\alpha_{4}=1$.
By substituting the exact values of $\alpha_{i}$, this is asymptotically
$0.5649853624$. Moreover, as $0=(0,\,0)\in\widehat{V}_{K}$ is connected
only to $(1,\,0)\in\widehat{V}_{K}$, we have by \cite[(7.1.39)]{B-denH}
that
\[
\widehat{\mathrm{cap}}_{K}(0,\,\widehat{A}_{K})=\frac{1}{|\widehat{V}_{K}|}[h(0,\,0)-h(1,\,0)]=\frac{1-a_{K-1}}{|\widehat{V}_{K}|}.
\]
Therefore, we have
\[
\lim_{K\rightarrow\infty}\mathfrak{c}_{K}=\lim_{K\rightarrow\infty}|\widehat{V}_{K}|\widehat{\mathrm{cap}}_{K}(0,\,\widehat{A}_{K})=1-\lim_{K\rightarrow\infty}a_{K-1}\approx0.4350146376,
\]
which concludes the proof.
\end{proof}
\begin{rem}
\label{r_bdry3}In the periodic boundary case, we need a completely
different auxiliary process to estimate the structure of edge typical
configurations. Namely, the desired process is a Markov chain on the
collection of subtrees of a $K\times2$-shaped ladder graph with semi-periodic
boundary conditions (i.e., open on the horizontal boundaries and periodic
on the vertical ones). In this case, we deduce an upper bound for
the corresponding capacity, which is sufficient to obtain the bound
\eqref{e_e'}. We refer to \cite[Proposition 7.9]{Kim-Seo Ising-Potts}
for more information on this estimate.
\end{rem}

\subsection{\label{secA.2}Projected auxiliary process}

Based on the original auxiliary process defined in the preceding subsection,
we define a projected auxiliary process which is obtained by simply
projecting the elements in $\widehat{A}_{K}$ to a single element
$\mathfrak{d}$. Rigorously, we define a graph structure $(V_{K},\,E(V_{K}))$
(see Figure \ref{figA.1} (right) for an illustration for the case
of $K=5$). The vertex set $V_{K}\subseteq\mathbb{R}^{2}$ is defined
by
\begin{equation}
V_{K}=(\widehat{V}_{K}\setminus\widehat{A}_{K})\cup\{\mathfrak{d}\}.\label{e_VK-1}
\end{equation}
Then, the edge structure $E(V_{K})$ is inherited by $E(\widehat{V}_{K})$;
we have $\{x,\,y\}\in E(V_{K})$ for $\{x,\,y\}\in E(\widehat{V}_{K})$,
$x,\,y\in\widehat{V}_{K}\setminus\widehat{A}_{K}$, and we have $\{x,\,\mathfrak{d}\}\in E(V_{K})$
for
\[
x\in\widehat{V}_{K}\setminus\widehat{A}_{K}\;\;\;\;\text{satisfying}\;\;\;\;\exists y\in\widehat{A}_{K}\;\;\;\;\text{with}\;\;\;\;\{x,\,y\}\in E(\widehat{V}_{K}).
\]
Then, we define $\{Z_{K}(t)\}_{t\ge0}$ as the continuous-time Markov
chain on $(V_{K},\,E(V_{K}))$ whose transition rate $r_{K}$ is defined
by $r_{K}(x,\,y)=\widehat{r}_{K}(x,\,y)$ if $x,\,y\ne\mathfrak{d}$
and
\[
r_{K}(x,\,\mathfrak{d})=r_{K}(\mathfrak{d},\,x)=\sum_{y\in\widehat{A}_{K}}\widehat{r}_{K}(x,\,y).
\]
This process is reversible with respect to the uniform distribution
on $V_{K}$.

We denote by $h_{\cdot,\cdot}^{K}(\cdot)$, $\mathrm{cap}_{K}(\cdot,\,\cdot)$,
$D_{K}(\cdot)$ the equilibrium potential, capacity, and Dirichlet
form with respect to $Z_{K}(\cdot)$, respectively, in the sense of
Definition \ref{d_pot}. Then, by the strong Markov property, it is
immediate from the definition that
\[
h_{0,\mathfrak{d}}^{K}(x)=\widehat{h}_{0,\widehat{A}_{K}}^{K}(x)\;\;\;\;;\;x\in\widehat{V}_{K}\setminus\widehat{A}_{K}\;\;\;\;\text{and}\;\;\;\;h_{0,\mathfrak{d}}^{K}(\mathfrak{d})=\widehat{h}_{0,\widehat{A}_{K}}^{K}(y)=0\;\;\;\;;\;y\in\widehat{A}_{K}.
\]
Therefore, by \eqref{e_Capdef}, we have (cf. \eqref{e_cK1})
\begin{equation}
|V_{K}|\mathrm{cap}_{K}(0,\,\mathfrak{d})=|\widehat{V}_{K}|\widehat{\mathrm{cap}}_{K}(0,\,\widehat{A}_{K})=\mathfrak{c}_{K}.\label{e_cK2}
\end{equation}

\begin{acknowledgement*}
S. Kim was supported by NRF-2019-Fostering Core Leaders of the Future
Basic Science Program/Global Ph.D. Fellowship Program and the National
Research Foundation of Korea (NRF) grant funded by the Korean government
(MSIT) (No. 2018R1C1B6006896).
\end{acknowledgement*}


\begin{thebibliography}{10}
\bibitem{B-L TM} Beltr\'{a}n, J.; Landim, C.: Tunneling and metastability
of continuous time Markov chains. Journal of Statistical Physics.
\textbf{140}: 1065-1114. (2010)

\bibitem{B-L TM2} Beltr\'{a}n, J.; Landim, C.: Tunneling and metastability
of continuous time Markov chains II, the nonreversible case. Journal
of Statistical Physics. \textbf{149}: 598-618. (2012)

\bibitem{B-L MG} Beltr\'{a}n, J.; Landim, C.: A martingale approach to
metastability. Probability Theory and Related Fields. \textbf{161}:
267-307. (2015)

\bibitem{BA-C} Ben Arous, G.; Cerf, R.: Metastability of the three
dimensional Ising model on a torus at very low temperatures. Electronic
Journal of Probability. \textbf{1}: 1-55. (1996)

\bibitem{B-denH} Bovier, A.; den Hollander, F.: \textit{Metastabillity:
A Potential-theoretic approach}. Grundlehren der mathematischen Wissenschaften.
Springer. (2015)

\bibitem{B-E-G-K} Bovier, A.; Eckhoff, M.; Gayrard, V.; Klein, M.:
Metastability in reversible diffusion processes I. Sharp asymptotics
for capacities and exit times. Journal of the European Mathematical
Society. \textbf{6}: 399-424. (2004)

\bibitem{C-G-O-V} Cassandro, M.; Galves, A.; Olivieri, E.; Vares,
M.E.: Metastable behavior of stochastic dynamics: A pathwise approach.
Journal of Mathematical Physics. \textbf{35}: 603-634. (1984)

\bibitem{C-N} Cirillo, E.N.M.; Nardi, F.R.: Relaxation height in
energy landscapes: An application to multiple metastable states. Journal
of Statistical Physics. \textbf{150}: 1080-1114. (2013)

\bibitem{C-N-Spi 17} Cirillo, E.N.M.; Nardi, F.R.; Spitoni, C.: Sum
of exit times in a series of two metastable states. The European Physical
Journal Special Topics. \textbf{226}: 2421-2438. (2017)

\bibitem{C-O} Cirillo, E.N.M.; Olivieri, E.: Metastability and nucleation
for the Blume--Capel model. Different mechanisms of transition. Journal
of Statistical Physics. \textbf{83}: 473-554. (1996)

\bibitem{G-L} Gaudilli\`{e}re, A.; Landim, C.: A Dirichlet principle
for non reversible Markov chains and some recurrence theorems. Probability
Theory and Related Fields. \textbf{158}: 55-89. (2014) 

\bibitem{Kim} Kim, S.: Second time scale of the metastability of
reversible inclusion processes. Probability Theory and Related Fields.
https://doi.org/10.1007/s00440-021-01036-6 (2021)

\bibitem{Kim-Seo NRIP} Kim, S.; Seo, I.: Condensation and metastable
behavior of non-reversible inclusion processes. Communications in
Mathematical Physics. \textbf{382}: 1343-1401. (2021)

\bibitem{Kim-Seo Ising-Potts} Kim, S.; Seo, I.: Metastability of
stochastic Ising and Potts models on lattices without external fields.
arXiv:2102.05565 (2021)

\bibitem{L-Le} Landim, C.; Lemire, P.: Metastability of the two-dimensional
Blume--Capel model with zero chemical potential and small magnetic
field. Journal of Statistical Physics. \textbf{164}: 346-376. (2016)

\bibitem{L-Le-M 19} Landim, C.; Lemire, P.; Mourragui, M.: Metastability
of the two-dimensional Blume--Capel model with zero chemical potential
and small magnetic field on a large torus. Journal of Statistical
Physics. \textbf{175}: 456-494. (2019)

\bibitem{L-M-Seo2} Landim, C.; Marcondes, D.; Seo, I.: A resolvent
approach to metastability. arXiv:2102.00998 (2021)

\bibitem{L-Mariani-Seo} Landim, C.; Mariani, M.; Seo, I.: Dirichlet's
and Thomson's principles for non-selfadjoint elliptic
operators with application to non-reversible metastable diffusion
processes. Archive for Rational Mechanics and Analysis. \textbf{231}:
887-938. (2019)

\bibitem{L-P-W} Levin, D.A.; Peres, Y.; Wilmer, E.L.: \textit{Markov
Chains and Mixing Times}. American Mathematical Society. (2017)

\bibitem{M-O} Manzo, F.; Olivieri, E.: Dynamical Blume--Capel model:
Competing metastable states at infinite volume. Journal of Statistical
Physics. \textbf{104}: 1029-1090. (2001)

\bibitem{N-Z} Nardi, F.R.; Zocca, A.: Tunneling behavior of Ising
and Potts models in the low-temperature regime. Stochastic Processes
and their Applications. \textbf{129}: 4556-4575. (2019)

\bibitem{N-Z-B} Nardi, F.R.; Zocca, A.; Borst, S.C.: Hitting time
asymptotics for hard-core interactions on grids. Journal of Statistical
Physics. \textbf{162}: 522-576. (2016)

\bibitem{N-S Ising1} Neves, E.J.; Schonmann, R.H.: Critical droplets
and metastability for a Glauber dynamics at very low temperatures.
Communications in Mathematical Physics. \textbf{137}: 209-230. (1991)

\bibitem{O-V} Olivieri, E.; Vares, M.E.: \textit{Large deviations
and metastability}. Encyclopedia of Mathematics and Its Applications,
vol. \textbf{100}. Cambridge University Press, Cambridge. (2005)

\bibitem{R-S} Rezakhanlou, F.; Seo, I.: Scaling limit of small random
perturbation of dynamical systems. arXiv:1812.02069 (2018)

\bibitem{Seo NRZRP} Seo, I.: Condensation of non-reversible zero-range
processes. Communications in Mathematical Physics. \textbf{366}: 781-839.
(2019)
\end{thebibliography}
\end{document}